\DeclareMathAlphabet{\mathpzc}{OT1}{pzc}{m}{it}
\DeclareSymbolFont{cyrletters}{OT2}{wncyr}{m}{n}
\DeclareMathSymbol{\Sha}{\mathalpha}{cyrletters}{"58}
\title{Local-global questions for divisibility in commutative algebraic groups}
\author{Roberto Dvornicich}
\address{\emph{Roberto Dvornicich}, Universit\`{a} di Pisa,
Dipartimento di Matematica,
Largo Bruno Pontecorvo 5,
56126 Pisa, Italy} 
\author{Laura Paladino$^*$}
\address{$^*$ \emph{Laura Paladino}, Corresponding author, Universit\`{a} della Calabria,
Dipartimento di Matematica e Informatica,
Ponte Pietro Bucci, Cubo 30B,
87036 Arcavacata di Rende (CS),
Italy, \emph{e-mail address}: laura.paladino@unical.it, ORCID: 0000-0003-4758-9775.}
\begin{document}

\baselineskip=17pt

\numberwithin{equation}{section}

\makeatletter                                                           

\def\section{\@startsection {section}{1}{\z@}{-5.5ex plus -.5ex        
minus -.2ex}{1ex plus .2ex}{\large \bf}}                                 

\def\subsection{\@startsection {subsection}{1}{\z@}{-5.5ex plus -.5ex         
minus -.2ex}{1ex plus .2ex}{\bf}}


\def\@setfoot@addresses{
\def\author##1{}%
 \def\\{\unskip, \ignorespaces}%
 \newif\if@firstaddr
 \@firstaddrtrue
 \def\address##1##2{%
 \if@firstaddr\@firstaddrfalse\else\par\fi
 \@ifnotempty{##1}{(\ignorespaces##1\unskip) }%
 {\ignorespaces##2}%
 }%
 \def\email##1##2{}%
 \def\curraddr##1##2{}%
 \def\urladdr##1##2{}%
 \addresses
 }

\pagestyle{plain}
\markright{  }

\newtheorem{thm}{Theorem}[section]

\newtheorem{mainthm}[thm]{Main Theorem}
\newtheorem*{T}{Theorem 1'}

\newcommand{\ZZ}{{\mathbb Z}}
\newcommand{\GG}{{\mathbb G}}
\newcommand{\Z}{{\mathbb Z}}
\newcommand{\RR}{{\mathbb R}}
\newcommand{\NN}{{\mathbb N}}
\newcommand{\GF}{{\rm GF}}
\newcommand{\divis}{{\rm div}}
\newcommand{\QQ}{{\mathbb Q}}
\newcommand{\CC}{{\mathbb C}}
\newcommand{\FF}{{\mathbb F}}
\newcommand{\PP}{{\mathbb P}}

\newtheorem{lem}[thm]{Lemma}
\newtheorem{cor}[thm]{Corollary}
\newtheorem{pro}[thm]{Proposition}
\newtheorem*{proposi}{Proposition \ref{pro:pro63}}
\newtheorem*{thm_notag}{Theorem}
\newtheorem{Problem}{Problem}
\newtheorem{Question}{Question}
\newtheorem*{problem_non}{Problem}
\newtheorem*{prob1}{Problem 1'}
\newtheorem*{prob2}{Problem 2'}
\newtheorem*{cass}{Cassels' question}

\newtheorem{taggedtheoremx}{Problem}
\newenvironment{taggedproblem}[1]
 {\renewcommand\thetaggedtheoremx{#1}\taggedtheoremx}
 {\endtaggedtheoremx}

\newtheorem{proprieta}[thm]{Property}
\newcommand{\pf}{\noindent \textbf{Proof.} \ }
\newcommand{\eop}{${\Box}$  \relax}
\newtheorem{num}{equation}{}

\theoremstyle{definition}
\newtheorem{rem}[thm]{Remark}
\newtheorem{D}[thm]{Definition}
\newtheorem{Not}{Notation}
\newtheorem{opq}{Open questions}
\newtheorem*{opq_non}{Some open questions}
\newtheorem{Def}{Definition}

\newcommand{\nsplit}{\cdot}
\newcommand{\GGG}{{\mathfrak g}}

\newcommand{\PPP}{{\mathfrak p}}
\newcommand{\GL}{{\rm GL}}
\newcommand{\SL}{{\rm SL}}
\newcommand{\SP}{{\rm Sp}}
\newcommand{\LL}{{\rm L}}
\newcommand{\Ker}{{\rm Ker}}
\newcommand{\la}{\langle}
\newcommand{\ra}{\rangle}
\newcommand{\PSp}{{\rm PSp}}
\newcommand{\GU}{{\rm GU}}
\newcommand{\GO}{{\rm GO}}
\newcommand{\Aut}{{\rm Aut}}
\newcommand{\Alt}{{\rm Alt}}
\newcommand{\Sym}{{\rm Sym}}
\newcommand{\et}{{\rm \acute{e}t}}

\newcommand{\isom}{{\cong}}
\newcommand{\z}{{\zeta}}
\newcommand{\Gal}{{\rm Gal}}
\newcommand{\SO}{{\rm SO}}
\newcommand{\SU}{{\rm SU}}
\newcommand{\PGL}{{\rm PGL}}
\newcommand{\PSL}{{\rm PSL}}
\newcommand{\loc}{{\rm loc}}
\newcommand{\Sp}{{\rm Sp}}
\newcommand{\Id}{{\rm Id}}

\newcommand{\modn}{{\rm \hspace{0.1cm} (mod \hspace{0.1cm} }}

\newcommand{\F}{{\mathbb F}}
\renewcommand{\O}{{\cal O}}
\newcommand{\Q}{{\mathbb Q}}
\newcommand{\R}{{\mathbb R}}
\newcommand{\N}{{\mathbb N}}
\newcommand{\E}{{\mathcal{E}}}
\newcommand{\G}{{\mathcal{G}}}
\newcommand{\Tor}{{\mathcal{T}}}
\newcommand{\A}{{\mathcal{A}}}
\newcommand{\C}{{\mathfrak{C}}}
\newcommand{\bmu}{{\textbf \mu}}
\newcommand{\s}{{\sigma}}

\newcommand\ddfrac[2]{\frac{\displaystyle #1}{\displaystyle #2}}
\newcommand{\longhookrightarrow}{\lhook\joinrel\longrightarrow}

\newcommand\mscriptsize[1]{\mbox{\scriptsize\ensuremath{#1}}}
\newcommand\mtiny[1]{\mbox{\tiny\ensuremath{#1}}}

\vskip 0.5cm

\maketitle

\vskip 1.5cm

\begin{abstract}
\noindent This is a survey focusing on the Hasse principle for divisibility of points in commutative algebraic groups
and its relation with the Hasse principle for divisibility of elements of the Tate-Shavarevich group in the 
Weil-Ch\^{a}telet group. The two local-global subjects arose as a  generalization  of some classical
questions considered respectively by Hasse and Cassels.   We describe the deep connection between the two problems and give an overview of the long-established results and the ones achieved during the last twenty years, when the questions were taken up
again in a more general setting. In particular, by connecting various results about the two problems, we describe how some recent developments in the first of the two local-global questions
imply an answer to Cassels' question, which improves all the results published before about that problem. This answer is best possible over $\QQ$. We also describe some links with other similar questions, as for examples the Support Problem and the local-global principle for existence of isogenies of prime degree in elliptic curves.

\end{abstract}

\medskip\noindent \textbf{Keywords:} Hasse principle; local-global divisibility problem; elliptic curves; Tate-Shafarevich group
\par\noindent \textbf{MSC2010:} 11G05; 11G07; 11G10; 11E72

\section{Introduction} \label{intro}
In 1923-1924 Hasse generalized to all number fields a result shown by Minkowski over $\QQ$.

\par\bigskip\noindent  \textbf{Hasse-Minkowski Theorem.}  
\emph{ Let $k$ be a number field and let $F(X_1,... ,X_n)\in
 k[X_1,... ,X_n]$ be a quadratic form. If $F$ represents $0$ non-trivially in $k_v$, for all completions
$k_v$ of $k$, then $F=0$ has a non-trivial solution in $k$.}

 \par\bigskip\noindent This theorem is also known as Hasse principle on quadratic forms.
The assumption that $F$ is isotropic in $k_v$ \emph{for all but finitely many completions}
(implying the same conclusion) gives a stronger form of the principle. Since then, many mathematicians have been concerned with similar 
so-called local-global problems, i.e. they have been questioning if, given a global field $k$,  the validity of some properties for all but finitely many local 
fields $k_v$ could ensure the validity of the same properties for $k$ (see among others \cite{BaPa}, \cite{CSSD}, \cite{HHK}, \cite{Kato}, \cite{PR}, \cite{Kne}, \cite{Che}, \cite{Poo}).  When the answer to such a problem is affirmative, 
one says that there is a local-global principle or a Hasse Principle. 
Along with the classical Hasse Principle on quadratic forms, one of the most famous local-global principles is the Albert-Brauer-Hasse-Noether's Theorem on central simple algebras, often referred to as Brauer-Hasse-Noether's Theorem (see for instance \cite{Roq}).

\begin{thm}[Albert, Hasse, Brauer, Noether, 1932]
Let $k$ be a number field and let $\mathfrak{A}$ be a central simple algebra over $k$. Then  $\mathfrak{A}$
splits over $k$ if and only if $\mathfrak{A}$ splits over $k_v$, for all places $v$ of $k$.
\end{thm}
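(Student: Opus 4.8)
The ``only if'' direction is immediate: a splitting $\mathfrak A \cong M_n(k)$ base-changes to a splitting $\mathfrak A \otimes_k k_v \cong M_n(k_v)$ over every completion. For the converse, the plan is to translate the problem into Brauer-group language and feed it into global class field theory. Recall that a central simple algebra over a field $K \supseteq k$ splits over $K$ exactly when its class vanishes in $\mathrm{Br}(K) \cong H^2(\mathrm{Gal}(\overline K/K), \overline K^{\times})$; hence the theorem is equivalent to the injectivity of the localization map
$$\mathrm{Br}(k)\;\longrightarrow\;\bigoplus_{v}\mathrm{Br}(k_v),\qquad [\mathfrak A]\longmapsto\bigl([\mathfrak A\otimes_k k_v]\bigr)_v,$$
the target being a direct sum because $[\mathfrak A\otimes_k k_v]=0$ for all but the finitely many places $v$ at which $\mathfrak A$ ramifies.

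To prove injectivity I would fix a finite Galois extension $L/k$ splitting $\mathfrak A$, put $G=\mathrm{Gal}(L/k)$, and work inside $\mathrm{Br}(L/k)=H^2(G,L^{\times})$. The short exact sequence of $G$-modules $1\to L^{\times}\to\mathbb A_L^{\times}\to C_L\to 1$ (id\`eles and id\`ele classes of $L$) yields a cohomology exact sequence in which the kernel of $H^2(G,L^{\times})\to H^2(G,\mathbb A_L^{\times})$ is the image of $H^1(G,C_L)$. On the other hand Shapiro's lemma identifies $H^2(G,\mathbb A_L^{\times})$ with $\bigoplus_v H^2(\mathrm{Gal}(L_w/k_v),L_w^{\times})=\bigoplus_v\mathrm{Br}(L_w/k_v)\subseteq\bigoplus_v\mathrm{Br}(k_v)$, and under this identification the map in question is precisely localization. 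Passing to the direct limit over all such $L$, the whole theorem reduces to the single cohomological vanishing statement $H^1(G,C_L)=0$ for every finite Galois $L/k$ --- ``Hilbert's Theorem 90 for id\`ele classes''.

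That vanishing is one of the pillars of global class field theory. For $L/k$ cyclic it amounts to the Hasse norm theorem, which I would obtain from the two fundamental inequalities, giving $[C_k:N_{L/k}C_L]=[L:k]$, together with the Herbrand-quotient computation $h(C_L)=[L:k]$; these force $\widehat H^{-1}(G,C_L)=0$, hence $H^1(G,C_L)=0$ by the periodicity of the cohomology of a cyclic group. The general case follows by a d\'evissage, one $p$-primary part at a time, to cyclic subextensions. An essentially equivalent but more classical route --- the one underlying the original 1932 argument --- is to represent $[\mathfrak A]$, after splitting $\mathrm{Br}(k)$ into its $p$-primary components, by a cyclic algebra $(L/k,\sigma,a)$ with $L/k$ cyclic cyclotomic (its existence guaranteed by the Grunwald--Wang theorem), and then to use that a cyclic algebra splits over $k$ (resp.\ over $k_v$) precisely when $a$ is a norm from $L$ (resp.\ from $L_w$); the Hasse norm theorem again concludes the proof.

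The formal parts above --- the Brauer-group reformulation, the id\`ele exact sequence, Shapiro's lemma --- are routine. The genuine obstacle is the class field theory input: the vanishing $H^1(G,C_L)=0$, or, in the classical approach, the Grunwald--Wang existence theorem (with its notorious exceptional $2$-primary case) together with the Hasse norm theorem. These rest in turn on the first and second inequalities of class field theory, hence ultimately on analytic information about Hecke $L$-functions (or, in the cohomological treatment, on a delicate computation of the cohomology of the id\`ele class group). Granting that deep layer, the local-global statement for central simple algebras is little more than an exercise in Galois cohomology.
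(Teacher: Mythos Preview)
The paper does not actually prove this theorem: it is quoted in the introduction as a classical local-global result and simply referenced to Roquette's historical account \cite{Roq}. There is therefore no ``paper's own proof'' to compare against.

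Your sketch is a correct outline of the standard modern proof. The reduction to injectivity of $\mathrm{Br}(k)\to\bigoplus_v\mathrm{Br}(k_v)$, the use of the id\`ele exact sequence $1\to L^\times\to\mathbb A_L^\times\to C_L\to 1$, and the identification of $H^2(G,\mathbb A_L^\times)$ with $\bigoplus_v\mathrm{Br}(L_w/k_v)$ via Shapiro are all standard and accurate. You correctly locate the genuine depth in the vanishing $H^1(G,C_L)=0$, and your account of how this is obtained---cyclic case from the two inequalities plus the Herbrand quotient, then a d\'evissage through $p$-groups---matches the treatment in, e.g., Cassels--Fr\"ohlich or Neukirch. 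The alternative classical route via cyclic algebras and the Hasse norm theorem is also correctly described. One minor historical quibble: invoking Grunwald--Wang for the 1932 argument is slightly anachronistic (Grunwald's paper is 1933), though Hasse had adequate cyclic-splitting results available at the time; this does not affect the mathematics.
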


\noindent 
Brauer proved that the tensor product equips the set of equivalence
classes of central simple algebras over $k$ with the structure of an abelian group, which is
called the Brauer group of $k$ and is denoted by $Br(k)$ (see the recent monograph \cite{CoSko} written by Colliot-Th\'el\`ene
and Skorobogatov). The cohomological description of $Br(k)$ is $H^2(k,
\bar{k}^*)$, where $\bar{k}$ is the separable closure of $k$, and can be extended in the case of a 
variety $X$ defined over $k$, giving rise to the Brauer-Grothendieck group 
$Br(X)=H^2_{\et}(X,\GG_{m,X})$ (see \cite{CoSko} and  \cite{Sko} for further details). 
In \cite{Man} Manin showed that in many cases the failure of the Hasse principle of the existence
of $k$-rational points on $X$ can
be explained by a reciprocity law imposed by $Br(X)$ on the set 
of adelic points on $X$.
For further details see \cite{Cre3}, in which Creutz shows that the
Brauer–Manin obstruction explains all failures of the Hasse principle of  existence
of $k$-rational points for torsors under
abelian varieties (see also \cite[Th\'eor\`eme 6]{Man}, where a similar statement was proved
under the hypothesis of finiteness of the Tate-Shafarevich group, that we will define in the following).
\par Local-global questions have often an equivalent formulation in terms of principal homogeneous
spaces under some group schemes $\G$ over $k$, that are classified by the first cohomology group
$H^1(k,\G):=H^1(\Gal(\bar{k}/{k}),\G(\bar{k}))$
(see for instance \cite{Hart}, \cite{Sko}). In these cases, when the hypotheses
require that the assertion holds in \emph{all} completions $k_v$, one can study the behaviour of the Tate-Shafarevich group
$\Sha(k,\G)$ to get information about the validity or the failure of the principle. In fact, this group
is the intersection of the kernels of the restriction maps $res_v: H^1(k,\G)\rightarrow H^1(k_v,\G)$, as $v$  varies in the set $M_k$ of places of $k$,
and its vanishing ensures a positive answer to the question.
On the other hand, by answering the problem in some cases, one can get information
about $\Sha(k,\G)$. When the hypotheses of a local-global question require its validity in \emph{all but
finitely many} completions $k_v$, the group that interprets the hypotheses of the problem
in the cohomological context is not exactly $\Sha(k,\G)$, but a similar group, i.e., the intersection of the
kernels of the maps $H^1(k,\G)\rightarrow \prod_{v\in \Sigma} H^1(k_v,\G)$, as $v$ varies in a subset
$\Sigma$ of $M_k$ containing all but finitely many places $v$. The two groups often coincide, but there are some examples
in which they differ (see Section \ref{h1loc_sha} and Section \ref{subsec1} for further details).  In various cases it suffices to study
the behaviour of one of them to understand the structure of the other (see Section \ref{h1loc_sha}).

In this paper we will be concerned with the following local-global problems
and their relation.

\begin{Problem} \label{prob1}
Let $k$ be a number field, $M_k$  the set of the places $v$ of $k$ and $\G$ a commutative and connected algebraic group defined over $k$. Let $P\in {\mathcal{G}}(k)$ and let $q$ be a positive integer. Assume that for all but finitely many  $v\in M_k$, there exists $D_v\in {\mathcal{G}}(k_v)$ such that $P=qD_v$. Is it possible to conclude that there exists $D\in {\mathcal{G}}(k)$ such that $P=qD$?
\end{Problem}

\noindent Problem \ref{prob1} was stated by the first author and Zannier in 2001 \cite{DZ} and it was
named \emph{Local-global divisibility problem}. It is the $r=0$ case of the following problem.

 \begin{Problem} \label{prob2} Let $k$ be a number field, $M_k$  the set of the places $v$ of $k$ and $\G$  a commutative and connected algebraic group defined over 
$k$. Let $q$ be a positive integer, let $\sigma\in H^r(k,\G)$ and let $res_v: H^r(k,\G)\rightarrow H^r(k_v,\G)$ be the restriction map.   Assume that for all but finitely many $v\in M_k$ there exists $\tau_v\in H^r(k_v,\G)$
  such that $q\tau_v=res_v(\sigma)$. 
Can we conclude that there exists $\tau \in H^r(k,\G)$, such that $q\tau=\sigma$?
  \end{Problem}

\noindent In a slightly different form, i.e.\ with the assumption that the local divisibility holds for all $v\in M_k$,
Problem \ref{prob2} was stated in 2016 by Creutz 
\cite{Cre2}. In fact, when $r=1$, a similar question was firstly 
posed by Cassels in 1962 only in the case when $\G$ is an elliptic curve \cite{Cas1}.

\begin{cass}
Let $k$ be a number field and $\E$ an abelian variety of dimension 1 defined over $k$. Are the
elements of $\Sha(k,\E)$ infinitely divisible by a prime $p$ when considered as elements of the Weil-Ch\^{a}telet group
$H^1(k,\E)$ of all classes of principal homogeneous spaces for $\E$ defined over $k$?
\end{cass}

\noindent Here infinitely divisible by $p$ means divisible by $p^l$, for all positive integers $l$.
Thus, if one wonders about the divisibility by every power $p^l$ of $p$ in Problem \ref{prob2}, then
this problem can be considered as a generalization of Cassels' question to all commutative algebraic groups. 
 Both
Problem \ref{prob1} and Problem \ref{prob2} are generally studied in the case when
$q=p^l$, with $p$ a prime number and $l$ a positive integer. In fact, an answer for all powers of prime numbers suffices to
have an answer for a general integer $q$, by using
the unique factorization in $\ZZ$ and B\'{e}zout's identity.

Since 1972, Cassels' question was considered in abelian varieties and not only in elliptic curves,  firstly
by Ba\v{s}makov \cite{Bas1}, \cite{Bas} and in the last few years  by \c{C}iperiani and Stix \cite{CS1}, \cite{CS} and by Creutz \cite{Cre}.

\bigskip In this paper we carefully explain the connection between these problems and between some groups
that interpret the hypotheses of Problem \ref{prob1} and respectively Problem \ref{prob2} in a cohomological context (among them being some Tate-Shafarevich groups). The relation between these groups
was sometimes hinted in the literature, but never explained in details.  We will also give a comprehensive overview of all the results 
achieved for those
questions, with particular emphasis on the case of elliptic curves. In fact, in this last case there is  a recent answer to Problem 1 \cite{PRV2},\cite{PRV3} which implies an affirmative answer to the mentioned Cassels' question for every $p>( 3^{[k:\Q] /2} + 1 )^2$  over a number field $k\neq \QQ$ 
and for every $p\geq 5$ over $\QQ$ (see Theorem \ref{new}). This answer is best possible over $\QQ$. The way of deducing such an answer to Cassels' question (see Section \ref{sec_Tate})
has not been explicitly described in other papers. 
The answer itself for $k\neq \QQ$ has not been explicitly stated in other papers. When $k=\QQ$ it is instead mentioned in \cite{Cre2}
as a consequence of \cite{PRV3}.

\medskip In addition observe that if the point $P$
in the statement of Problem 1 is the zero point in the group law of $\G$ and we require that neither $D$ nor $D_v$, for all but finitely many $v$, is the zero point itself, then
the question can be reformulated as follows: 
\emph{if $\G$ admits a $k_v$-rational torsion point of order $q$, for all but
finitely many places $v\in M_k$, can we conclude that $G$ admits a $k$-rational torsion point of order $q$?}
Thus, the question is somehow related with some other famous problems about torsion points or reductions
of torsion points in abelian varieties, as the \emph{Support Problem} studied by Corrales-Rodrig\'a\~{n}ez and Schoof
in \cite{CRS} or the question studied by Katz in \cite{Katz} about the group of $k$-rational torsion points of an abelian variety. Owing to the connection between the existence of isogenies of prime degree $p$ and the existence of
$k$-rational $p$-torsion points, the question is also linked to the local-global problem for the existence of isogenies of prime degree in elliptic curves, studied by
Sutherland in \cite{Sut}. We will describe these and some other related problems and the main results obtained
about them in Section \ref{sec6}.

\par The paper is structured as follows. At first we give a historical overview of the formulation of the two problems and
their classical solutions. Then we describe a cohomological interpretation for Problem \ref{prob1} and give more details about the
link between Problem \ref{prob1}, Problem \ref{prob2} and Cassels' question, that is discussed in Section 4. Section 5 is dedicated to Problem \ref{prob2} and Cassels' question.  
In the following Section 6 and Section 7 we describe the affirmative results achieved for the three problems and respectively the known counterexamples.
As mentioned above, in the last part of the paper
we illustrate some questions similar or somehow related to the three problems, among them the Support Problem \cite{CRS}, the problem studied by Katz 
about the existence
of a torsion point of a prescribed order \cite{Katz} and the local-global principle for the existence of isogenies
of prime degree \cite{Sut}. We give a brief overview of the main results achieved for those problems too and explain their connections with Problem 1 and Problem 2.

\bigskip\noindent \textbf{Acknowledgments.} A part of this paper was written when the second
author was a guest at the Max Planck Institute for Mathematics in Bonn. She thanks for the hospitality
and the excellent work conditions. The authors are very grateful to Brendan Creutz
for some precious suggestions and for useful discussions. 
They also warmly thank Jacob Stix for helpful discussions. They are grateful to Igor Shparlinski and Boris Kunyavskii for having pointed out 
Remark \ref{density}. Furthermore the authors deeply thank the anonymous referees 
for many valuable comments and suggestions.

\section{Classical problems and classical solutions} \label{sec1}

 \par\bigskip\noindent  In the case of a quadratic form  
 $X^2+rY^2$, where $r$ is a rational number, the Hasse Principle is equivalent to
the statement  ``if a rational number is a square in $k_v$,
 for all but finitely many $v$, then it is a square in $k$''. It  is natural to ask if
such a principle still holds for $q$-powers of rational numbers, where $q$ is a general
positive integer, and not only for rational squares. The answer to such a question was
given by the Grunwald-Wang Theorem (see for example \cite[Chap. IX and Chap. X]{AT}).
Here we state the theorem in its classical form, i.e.\ in the more general case when $k$
is a global field. Through all the paper, for every positive integer $q$, we denote by $\zeta_q$ a primitive
$q$-th root of the unity. Furthermore, for every positive integer $s$, let $\hat{\z}_{2^s}$ be a $2^s$-th root of the unity
such that $\hat{\z}_{2^{s+1}}=\hat{\z}_{2^s}$, and let $\eta_s:=\hat{\z}_{2^s}+(\hat{\z}_{2^s})^{-1}$. In particular,
for every field $k$, there exists an integer $s_k\geq 2$ such that $\eta_{s_k}\in k$,
but $\eta_{s_k+1}\notin k$. 

\begin{thm}[Grunwald-Wang, 1933-1950] \label{GWT}
Let $k$ be a global field, let $q$ be a positive integer and let $\Sigma$ be a set
containing all but finitely many places $v$ of $k$. Consider the group $P(q,\Sigma)=\{x\in k|
x\in k_v^q, \textrm{ for all } v\in \Sigma\}$. Then $P(q,\Sigma)=k^q$ except under the following
conditions:

\begin{enumerate} 
\item[1.] $k$ is a number field;
\item[2.] $-1$, $2+\eta_{s_k}$ and $-(2+\eta_{s_k})$ are non-squares in $k$;
\item[3.] $q=2^tq'$, where $q'$ is odd and $t>s$;
\item[4.] $v\notin \Sigma$, for all $v|2$ where $-1, 2+\eta_{s_k}$ and
$-(2+\eta_{s_k})$ are non-squares in $k_v$.
\end{enumerate}

\noindent In this special case $P(q,\Sigma)=k^q\cup \eta_{s_k+1}^q k^q$.

\end{thm}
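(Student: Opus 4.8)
The plan is to translate the statement into Galois cohomology via Kummer theory, to reduce to prime powers, to settle the generic situation by a Chebotarev argument, and then to isolate the genuinely delicate $2$-adic phenomenon that is the source of conditions 2--4. First one reduces to the case in which $q=p^n$ is a prime power: writing $m=\prod_p p^{n_p}$, the Chinese remainder theorem applied to the exponents gives canonical isomorphisms $k^{*}/k^{*m}\cong\prod_p k^{*}/k^{*p^{n_p}}$ and $k_v^{*}/k_v^{*m}\cong\prod_p k_v^{*}/k_v^{*p^{n_p}}$ compatible with the localization maps, so that $P(m,\Sigma)/k^{*m}\cong\prod_p\bigl(P(p^{n_p},\Sigma)/k^{*p^{n_p}}\bigr)$ and it suffices to compute each $P(p^n,\Sigma)$. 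Since conditions 2--4 involve only the $2$-part of $m$, for odd $p$ one must prove $P(p^n,\Sigma)=k^{*p^n}$ with no exception, while for $p=2$ the exceptional coset has to be produced.

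Now fix $q=p^n$, put $F=k(\zeta_q)$ and $G=\Gal(F/k)$, so that $G$ embeds in $(\ZZ/q)^{*}$, and let $x\in P(q,\Sigma)$. Because $\mu_q\subseteq F$, the Kummer extension $L=F(\sqrt[q]{x})$ is cyclic of degree dividing $q$; for every place $w$ of $F$ lying above some $v\in\Sigma$ one has $x\in k_v^{*q}\subseteq F_w^{*q}$, hence $w$ splits completely in $L/F$. As $\Sigma$ is cofinite, all but finitely many primes of $F$ split completely in $L/F$, and Chebotarev's density theorem then forces $L=F$, i.e.\ $x=z^{q}$ for some $z\in F^{*}$. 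For $\tau\in G$ one has $(\tau z)^{q}=\tau(x)=x$, so $c(\tau):=\tau(z)z^{-1}$ takes values in $\mu_q$ and defines a class $[c]\in H^1(G,\mu_q)$, and a short chase with coboundaries shows that $x\in k^{*q}$ if and only if $[c]=0$. Running the same computation over $k_v$ for $v\in\Sigma$ shows, moreover, that $[c]$ restricts to $0$ in $H^1(G_w,\mu_q)$ for every decomposition subgroup $G_w\subseteq G$ attached to a place above $\Sigma$.

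For odd $p$ this already finishes the proof. Here $G$ is cyclic, being a subgroup of the cyclic group $(\ZZ/q)^{*}$; say it is generated by $\tau$, acting on $\mu_q\cong\ZZ/q$ as multiplication by a unit $u$, so that $H^1(G,\mu_q)=\ker(N)/\mathrm{im}(u-1)$ with $N=1+u+\cdots+u^{|G|-1}$. If $u\not\equiv 1\pmod p$, then $u-1$ is a unit of $\ZZ/q$ and $H^1(G,\mu_q)=0$; if $u\equiv 1\pmod p$ with $u\neq 1$, a lifting-the-exponent computation gives $v_p(N)=n-v_p(u-1)$, whence $\ker(N)=p^{\,v_p(u-1)}\ZZ/q=\mathrm{im}(u-1)$ and again $H^1(G,\mu_q)=0$; and if $u=1$ then $G$ is trivial. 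In every case $[c]=0$, so $P(p^n,\Sigma)=k^{*p^n}$.

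When $p=2$ the argument breaks down, since $G$ is only a subgroup of $(\ZZ/2^n)^{*}\cong\ZZ/2\times\ZZ/2^{n-2}$ and may fail to be cyclic, so $H^1(G,\mu_{2^n})$ can be nontrivial. One then exploits the extra local information: $[c]$ lies in the subgroup of $H^1(G,\mu_{2^n})$ of classes restricting to $0$ on every decomposition subgroup coming from a place above $\Sigma$, and by Chebotarev the unramified decomposition subgroups already exhaust all cyclic subgroups of $G$, so any surviving obstruction must be concentrated at the finitely many---necessarily dyadic---places omitted from $\Sigma$. A direct computation of $H^1(\Gal(k(\zeta_{2^n})/k),\mu_{2^n})$ together with these local conditions then shows that the obstruction group is trivial unless $k$ and $\Sigma$ satisfy conditions 1--4, in which case it is cyclic of order $2$, generated by the class attached to $\mathfrak{a}_0=\eta_{s_k+1}^m$; using the identity $\eta_{s_k+1}^{2}=2+\eta_{s_k}$ one verifies that $\mathfrak{a}_0$ lies in $k_v^{*m}$ for every $v\in\Sigma$ (which is precisely what conditions 3--4 are designed to guarantee) but that $\mathfrak{a}_0\notin k^{*m}$, so that $P(m,\Sigma)=k^{*m}\cup\mathfrak{a}_0k^{*m}$. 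This last step---computing the relevant $2$-adic cohomology with its local conditions, determining precisely when the local--global kernel fails to vanish, and matching its generator with the explicit element $\mathfrak{a}_0$---is the main obstacle; it is exactly the point at which Grunwald's original argument was flawed and at which Wang's correction, hence conditions 2--4, entered.
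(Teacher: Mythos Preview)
The paper does not prove this theorem. It is a survey, and Theorem~\ref{GWT} is quoted as a classical result with a reference to Artin--Tate, \emph{Class Field Theory}, Chapters~IX--X; no argument is supplied. So there is no ``paper's own proof'' to compare your proposal against.

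On its own merits: your reduction to prime powers and your treatment of the odd-$p$ case are correct and essentially the standard route (indeed the computation showing $H^{1}(G,\mu_{p^{n}})=0$ for $G\subseteq(\ZZ/p^{n})^{*}$ cyclic and $p$ odd is exactly the kind of argument one finds in Artin--Tate). The genuine gap is the $2$-adic part. You reduce correctly to analysing the kernel of $H^{1}(\Gal(k(\zeta_{2^{n}})/k),\mu_{2^{n}})\to\prod_{v}H^{1}(G_{w},\mu_{2^{n}})$, but then you write that ``a direct computation \ldots\ shows that the obstruction group is trivial unless $k$ and $\Sigma$ satisfy conditions 1--4'' and you yourself flag this as ``the main obstacle''. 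That computation is the entire content of the theorem: one must determine exactly which subgroups of $(\ZZ/2^{n})^{*}$ arise as $\Gal(k(\zeta_{2^{n}})/k)$ (this is where $s_{k}$ and condition~2 enter), compute $H^{1}$ for each, identify which cyclic subgroups occur as decomposition groups at the omitted dyadic places (condition~4), and then match the surviving class with $\mathfrak{a}_{0}=\eta_{s_{k}+1}^{m}$. None of this is carried out. As written, your proposal is a correct framework with the substantive step omitted; to turn it into a proof you would have to supply precisely Wang's analysis of the special case, or reproduce the relevant portion of Artin--Tate.
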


\noindent In particular, when $k=\QQ$, the principle for $q$-powers of rational numbers could fail only
when $q$ is divided by $2^t$, with $t\geq 3$. The first example violating the principle was shown by Trost in
1934 (see \cite{Tro}). 

 \begin{thm}[Trost, 1948]
The equation $x^8=16$ has a solution in the $p$-adic field $\QQ_p$, for every $p\neq 2$, but
it has no solutions in $\QQ_2$ and in $\QQ$.
\end{thm}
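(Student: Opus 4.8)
The plan is to verify the three assertions separately: solvability in $\QQ_p$ for every odd prime $p$, and non-solvability in $\QQ_2$ and in $\QQ$. The non-solvability in $\QQ$ is the easiest: if $x^8 = 16$ with $x \in \QQ$, then $x^4 = \pm 4$, so $x^4 = 4$ (the case $x^4 = -4$ being impossible over $\RR$), whence $x^2 = \pm 2$, which forces $\sqrt{2} \in \QQ$, a contradiction. Equivalently, one checks that $16$ is an $8$-th power in $\QQ$ iff $\sqrt{2} \in \QQ$. The same computation works in $\QQ_2$: from $x^8 = 16$ we would get $x^2 = \pm 2$ in $\QQ_2$, but $\pm 2$ is not a square in $\QQ_2$ since $v_2(\pm 2) = 1$ is odd; hence there is no solution in $\QQ_2$ either.

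The substantive part is to show $x^8 = 16$ has a solution in $\QQ_p$ for every odd prime $p$. I would split into cases according to the residue of $p$ mod $8$ and use Hensel's lemma. Note $16 = 2^4$, so it suffices to show that $2$ is a fourth power in $\QQ_p$, or more cleverly to exploit the factorization $x^8 - 16 = (x^4-4)(x^4+4) = (x^2-2)(x^2+2)(x^2-2x+2)(x^2+2x+2)$; so it is enough that at least one of $2$, $-2$, $-1$ becomes a square in $\QQ_p$ (the last two quadratics have discriminant $-4$). For $p$ odd, at least one of $2$, $-1$, $-2$ is a quadratic residue mod $p$: indeed the product of the Legendre symbols $\left(\frac{2}{p}\right)\left(\frac{-1}{p}\right)\left(\frac{-2}{p}\right) = \left(\frac{-1}{p}\right)^2\left(\frac{2}{p}\right)^2 \cdot \left(\frac{-1}{p}\right)\left(\frac{2}{p}\right)$... more simply, if $\left(\frac{2}{p}\right) = -1$ and $\left(\frac{-1}{p}\right) = -1$ then $\left(\frac{-2}{p}\right) = (-1)(-1) = 1$. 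So one of the three is a square mod $p$, and since $p$ is odd Hensel's lemma lifts it to a square in $\QQ_p$; the corresponding quadratic factor of $x^8 - 16$ then has a root in $\QQ_p$.

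For $p$ odd this already finishes it, so there is no genuine obstacle — the only point requiring a little care is making sure Hensel applies, i.e. that the relevant square root is a $p$-adic unit (which it is, being a unit mod $p$) and that $p \neq 2$ so the derivative $2x$ is a unit. One can alternatively give a uniform argument: the extension $\QQ_p(\zeta_8, 2^{1/8})/\QQ_p$ for $p$ odd is unramified, and an unramified extension of $\QQ_p$ of degree dividing the order of $2 \bmod p$ in $(\ZZ/p)^\times$ (suitably interpreted) contains the needed root; but the case-check via quadratic factors is cleaner to write down. I expect the write-up to be short, with the elementary descent over $\QQ$ and $\QQ_2$ dispatched in two lines each and the $p$-adic existence handled by the Legendre-symbol observation plus Hensel.
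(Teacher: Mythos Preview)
Your argument is correct. Note, however, that the paper does not actually supply a proof of this theorem: it is stated as a classical result and attributed to Trost with a citation to \cite{Tro}, so there is no ``paper's own proof'' to compare against.

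A couple of minor remarks on your write-up. For the $\QQ_2$ case, the passage ``from $x^8=16$ we would get $x^2=\pm 2$ in $\QQ_2$'' tacitly uses that $x^4=-4$ is already impossible in $\QQ_2$ (since $-1$ is not a $2$-adic square); you might as well bypass the descent entirely and observe that $8\,v_2(x)=v_2(16)=4$ forces $v_2(x)=\tfrac12\notin\ZZ$, which is the argument your parenthetical about $v_2(\pm 2)=1$ being odd is really pointing at. For the odd-$p$ case your factorization
\[
x^8-16=(x^2-2)(x^2+2)(x^2-2x+2)(x^2+2x+2)
\]
together with the Legendre-symbol observation that at least one of $2$, $-2$, $-1$ is a square mod $p$ (since $\bigl(\tfrac{-2}{p}\bigr)=\bigl(\tfrac{-1}{p}\bigr)\bigl(\tfrac{2}{p}\bigr)$) is exactly the standard clean argument, and Hensel applies without difficulty since $p$ is odd. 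Nothing more is needed.
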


\noindent Similar examples can be constructed for all powers $2^t$, with $t\geq 3$ and, consequently, for
all integers  $q=2^tq'$, where $q'$ is odd and $t\geq 3$, as in the statement of the theorem.
For further details about the formulation of the Grunwald-Wang Theorem, the reader can see the
survey \cite{Roq} by Roquette.
\par If we denote by $\GG_m$ the multiplicative group over $k$, then the
Grunwald-Wang Theorem holds in the commutative group $\GG_m$ as well
as in $k$.  By questioning if its validity still holds for a general
commutative algebraic group $\G$ instead of $\GG_m$, we get nothing
but Problem 1, i.e.\ the \emph{Local-global divisibility problem in
  commutative algebraic groups}. 
So in the cases when the answer to Problem 1 is
affirmative, we have a kind of a generalization of the Hasse Principle
for squares of $k$-rational numbers.  The answer to the local-global divisibility
depends on $k$ as well as on $q$ and this is already shown by the Grunwald-Wang Theorem
in the case when $\G$ is $\GG_m$.

\par \bigskip As stated in the introduction, the more general Problem 2 also was motivated by a classical problem, i.e.\ Cassels' question.
This question was formulated in 1962 in the third paper of Cassels'  famous series 
\emph{Arithmetic on curves of genus 1} (see  Problem (b) in \cite{Cas4} and Problem 1.2 in \cite{Cas3}; for the
whole series of the mentioned Cassels' papers see  \cite{Cas1}, \cite{Cas2}, \cite{Cas3}, \cite{Cas4}, \cite{Cas5}, \cite{Cas6}).
 An affirmative answer to the local-global divisibility only by $p$ for elements in 
$H^1(k, \E)$ was soon given by Cassels and Tate 
 (see \cite[Lemma 6.1 and its corollary]{Cas4} and see also \cite[Theorem 8.1]{Cas3}). In particular Cassels deduced the validity
of the local-global divisibility by $p$ from the following lemma.

\begin{lem}[Tate, 1962] \label{Tate}
Let $k$ be a number field with algebraic closure $\bar{k}$ and absolute Galois group $G_k:=\Gal(\bar{k}/k)$. 
Let $M$ be a $G_k$-module that is isomorphic to $\ZZ/p\ZZ \times \ZZ/p\ZZ$. Then
an element of $H^2(G_k,M)$ is trivial if it is everywhere locally trivial. 
\end{lem}

\noindent  Here \emph{everywhere locally trivial} means that, for all $v$, the element vanishes in  $H^2(G_{k_v},M(\bar{k_v}))$, where
$\bar{k_v}$ is the algebraic closure of $k_v$ and $G_{k_v}:=\Gal(\bar{k_v}/k_v)$. Assume that $\G$ is a smooth commutative algebraic group and that the 
multiplication-by-$q$ map $[q]$ is \'etale, then we have the exact sequence

$$0\longrightarrow \G[q]\longrightarrow \G  \xrightarrow{\makebox[0.5cm]{{\small $[q]$}}} \G \longrightarrow 0,$$

\noindent where $\G[q]$ is the $q$-torsion subgroup of $\G$, which implies the long-exact sequence of Galois cohomology 

$$ ...\longrightarrow \G(k) \longrightarrow H^1(k,\G[q])\longrightarrow H^1(k,\G)\longrightarrow H^1(k,\G)  \xrightarrow{\makebox[0.5cm]{{\small }}} H^{2}(k,\G[q]) \longrightarrow ... .$$
 
\par\noindent In the case of an elliptic curve $\E$, since $\E[p]\simeq \ZZ/p\ZZ\times \ZZ/p\ZZ$, then the local-global divisibility by $p$ holds in $H^1(k,\E)$, as a consequence of Tate's lemma. On the contrary, for powers $p^l$, with $l\geq 2$, the problem remained open for decades, even in the case  of elliptic curves defined over $\QQ$. In this last case, an affirmative answer for all powers $p\geq 5$ has been lately proved. We will describe it in Section \ref{sec_Tate}, as a consequence of some answers given to Problem 1.

\section{A cohomological interpretation of Problem 1} \label{sec3}

 When  $\G\neq \GG_m$ a useful way to attack Problem 1
was shown in \cite{DZ}, in which the authors gave a cohomological interpretation
of the problem. For every positive integer $q$, we denote by  $K:=k(\G[q])$ the number field generated over $k$ by the coordinates
of the points in the $q$-torsion subgroup $\G[q]$ of $\G$. Since $K$ is the splitting field of the $q$-division polynomials,
then $K/k$ is a Galois extension, whose Galois group we denote by $G$.
Let $P\in \G(k)$ and let $D\in \G(\bar{k})$ be a $q$-divisor of 
$P$, i.e.\ $P=qD$. Let $F$ be the extension of $K$ generated by the coordinates
of $D$. Two $q$-divisors of $P$ differ by a $q$-torsion point of $\G$. Then we have that $F/k$ is a Galois extension
(it is the splitting field of the polynomials whose roots are the coordinates of the points $\tilde{D}\in \G$ satisfying $q\tilde{D}=P$) and we denote by $\Gamma$ its Galois group $\Gal(F/k)$ (see also \cite{DZ}).
For every $\sigma\in \Gamma$, we have

\begin{equation} \label{cocycle_D} q\sigma(D)=\sigma(qD)=\sigma(P)=P. \end{equation}

\noindent Thus the points $\sigma(D)$ and $D$ differ by a point in $\G[q]$ and 
 we can define a cocycle $\{Z_{\sigma}\}_{\sigma\in \Gamma}$ of $\Gamma$ with
values in $\G[q]$ by

\begin{equation} \label{eq1} Z_{\sigma}:=\sigma(D)-D. \end{equation} 

\begin{pro} \label{pro1}
The class of the cocycle $\{Z_{\sigma}\}_{\sigma\in \Gamma}$ defined in \eqref{eq1} vanishes in 
$H^1(\Gamma,\G[q])$ if and only if there exists $D'\in \G(k)$ such that $qD'=P$. 
\end{pro}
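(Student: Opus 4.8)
The plan is to treat this as a Kummer-style descent and prove the two implications by a single symmetric coboundary computation. Before doing so I would fix the standing conventions that make the statement precise: $\G[q]\subseteq\G(K)$ carries a genuine action of $G=\Gal(K/k)$ since $K=k(\G[q])$; the class of $\{Z_\sigma\}$ in $H^1(G,\G[q])$ is independent of the chosen $q$-divisor $D$ of $P$, because replacing $D$ by $D+T$ with $T\in\G[q]$ replaces $Z_\sigma$ by $Z_\sigma+(\sigma(T)-T)$, i.e. alters it by a coboundary; and one will use the elementary descent identity $\G(K)^{G}=\G(k)$, which holds because $K/k$ is Galois with group $G$ (so $\G(K)^{G}=\G(\bar k)^{G_k}=\G(k)$). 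If one does not want to assume at the outset that the chosen $D$ lies in $\G(K)$, simply run the whole argument with $K$ replaced by $k(\G[q],D)$ and $G$ by $\Gal(k(\G[q],D)/k)$; it is verbatim the same and produces the same cohomology class after inflation.

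For the implication ``$\Leftarrow$'' I would argue as follows. Assume there is $D'\in\G(k)$ with $qD'=P$. Then $q(D-D')=P-P=0$, so $W:=D-D'\in\G[q]$, and since $D'$ is fixed by $G$ we get, for every $\sigma\in G$,
\[
Z_\sigma=\sigma(D)-D=\bigl(\sigma(D')+\sigma(W)\bigr)-\bigl(D'+W\bigr)=\sigma(W)-W .
\]
Thus $\{Z_\sigma\}$ is the coboundary of $W$ and hence vanishes in $H^1(G,\G[q])$.

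For the implication ``$\Rightarrow$'' I would reverse this. Assume $\{Z_\sigma\}$ is trivial in $H^1(G,\G[q])$, so there is $W\in\G[q]$ with $Z_\sigma=\sigma(W)-W$ for all $\sigma\in G$. Set $D':=D-W$. Then $qD'=qD-qW=P-0=P$, and for every $\sigma\in G$,
\[
\sigma(D')=\sigma(D)-\sigma(W)=\bigl(D+Z_\sigma\bigr)-\sigma(W)=D+\bigl(\sigma(W)-W\bigr)-\sigma(W)=D-W=D' .
\]
Hence $D'$ lies in $\G(K)^{G}=\G(k)$ and is a $k$-rational $q$-divisor of $P$.

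The computations are routine; the only genuinely delicate point — and the thing I would be most careful about — is the bookkeeping of fields: ensuring that the object on which $G$ acts is $\G[q]\subseteq\G(K)$, that the class is well defined independently of $D$, and that the conclusion ``fixed by $G$'' upgrades to ``$k$-rational'' via $\G(K)^{G}=\G(k)$. Once these are pinned down, everything rests on the single identity $Z_\sigma=\sigma(W)-W$ with $W=D-D'$. This proposition is exactly what licenses the cohomological reformulation of Problem \ref{prob1}: local solubility will later be encoded by the triviality of $\{Z_\sigma\}$ in each $H^1(G_v,\G[q])$, so that the obstruction to the local--global principle sits inside the ``locally trivial'' subgroup of $H^1(G,\G[q])$.
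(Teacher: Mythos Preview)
Your proof is correct and follows essentially the same coboundary computation as the paper: set $D'=D-W$ with $W\in\G[q]$ a splitting cochain, check $G$-invariance, and conclude $D'\in\G(k)$ with $qD'=P$. You are in fact more careful than the paper, which declares the ``$\Leftarrow$'' direction trivial and does not explicitly discuss well-definedness or the passage from $G$-fixed to $k$-rational; your remarks on these points are sound and do not change the argument.
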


\begin{proof}
Assume that $\{Z_{\sigma}\}_{\sigma\in \Gamma}$ vanishes in 
$H^1(\Gamma,\G[q])$, then there exists $W\in \G[q]$ such that
$\sigma(W)-W=Z_{\sigma}=\sigma(D)-D$, for all $\sigma\in \Gamma$. We have 
$\sigma(D-W)=D-W$, for all $\sigma\in \Gamma$. Thus $D':=D-W\in \G(k)$, since $qD'=qD-qW=qD=P$.
The other implication is trivial.
\end{proof}

\noindent 
As mentioned above, the vanishing of some specific first cohomology group often ensures
an affirmative answer to this kind of problems. 
This is quite a standard way of proceeding in
local-global questions, so we stated the proof of Proposition \ref{pro1} 
for the reader's convenience.
The goal in \cite{DZ} was to consider
a subgroup of $H^1(G, \G[q])$, 
whose vanishing still ensures an affirmative answer to Problem 1. 

\begin{D} 
Let $\Sigma$ be a subset of $M_k$ containing all the valuations
$v$ unramified in $K$. For every $v\in \Sigma$, let $G_v:=\Gal(K_w/k_v)$,
where $w$ is a place of $K$ extending $v$.
We call the \emph{first local cohomology group} (of $G$ with values in $\G[q]$) the following subgroup of $H^1(G,\G[q])$:

\begin{equation} \label{h1loc}
H^1_{\textrm{loc}}(G,\G[q]):=\bigcap_{v\in \Sigma} \ker  (H^1(G,\G[q])\xrightarrow{\makebox[1cm]{{\small $res_v$}}} H^1(G_v,\G[q])).
\end{equation}

\end{D}

\noindent The first local cohomology group portrays the hypotheses of the
problem in the cohomological context.  In fact, observe that if there exists a point $D_v\in \G(k_v)$ such
that $P=qD_v$, then as in \eqref{eq1} we can define a cocycle of $G_v$ with values in $\G[q]$
vanishing in $H^1(G_v,\G[q])$.
The elements of $H^1_{\textrm{loc}}(G,\G[q])$ are represented by cocycles that vanish in
$H^1(G_v,\G[q])$, for all $v\in \Sigma$. We can say that the cocycles representing a class in $H^1_{\textrm{loc}}(G,\G[q])$ are \emph{locally coboundaries}. The group $H^1_{\textrm{loc}}(G,\G[q])$ was firstly defined by Tate, as stated by Serre in \cite{Ser},
where the group was introduced (and denoted by $H^1_{*}(G,\G[q])$).
It is very similar to the Tate-Shafarevich group $\Sha(k,\G[q])$ up to isomorphism
(see Section \ref{h1loc_sha} for further details). 
Observe that, by the Chebotarev Density Theorem (see \cite{Tche} and \cite{LO}), the local Galois group $G_v$ varies over all cyclic subgroups of $G$ as $v$ varies in $\Sigma$.
Then, for every $\sigma\in G$, there exists $v\in \Sigma$, such that  $G_v=\langle\sigma\rangle$. 
Thus, if $\{Z_{\sigma}\}_{\sigma\in G}\in H^1_{\textrm{loc}}(G,\G[q])$, then  for every $\sigma \in G$ there exists
$W_{\sigma}\in \G[q]$ such that $Z_{\sigma} =(\sigma-1)W_{\sigma}$.  
As stated in \cite[Definition at pag. 321]{DZ},
we have the following equivalent definition of $H^1_{\loc}(G,\G[q])$.

\begin{D} \label{loc_cond}
 A cocycle $\{Z_{\sigma}\}_{\sigma\in G}\in H^1(G,\G[q])$ satisfies the
\emph{local conditions} if, for every $\sigma\in G$, there exists $W_{\sigma}\in \G[q]$ such that
$Z_{\sigma}=(\sigma-1)W_{\sigma}$. The subgroup of $H^1(G,\G[q])$ formed by all the cocycles satisfying the local conditions
is the first local cohomology group $H^1_{\textrm{loc}}(G,\G[q])$.
\end{D}

\noindent This second definition shows explicitly the kind of cocycles that one has to check 
 if they are coboundaries or not. Such a description was useful to get a solution to the problem
both in cases when the answer is affirmative and in cases when it is negative. In fact, the triviality of the first
cohomology group assures an affirmative answer to Problem 1.

 \begin{thm}[Dvornicich, Zannier, 2001] \label{h1loc} 
If $H^1_{\textrm{loc}}(G,\G[q])=0$, then the local-global divisibility by $q$ holds in $\G$ over $k$.
\end{thm}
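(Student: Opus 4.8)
The plan is to show that the hypothesis of Problem 1 forces the cocycle $\{Z_\sigma\}$ attached to any $q$-divisor $D$ of $P$ to lie in $H^1_{\textrm{loc}}(G,\G[q])$, and then invoke the vanishing of that group together with Proposition~\ref{pro1} to conclude. So first I would fix a point $P\in\G(k)$ such that $P=qD_v$ for some $D_v\in\G(k_v)$ for all but finitely many $v$, pick any $D\in\G(\bar k)$ with $qD=P$ (which exists since $\G$ is a divisible group over $\bar k$, being a connected commutative algebraic group; alternatively $P$ is a $k$-point so it has $\bar k$-divisors), and form the cocycle $Z_\sigma=\sigma(D)-D\in\G[q]$ for $\sigma\in G=\Gal(K/k)$, exactly as in~\eqref{eq1}.

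The key step is the local verification. Let $v$ be one of the (cofinitely many) places for which a local divisor $D_v\in\G(k_v)$ with $qD_v=P$ exists; by shrinking, I may assume in addition that $v$ is unramified in $K$, so that $v\in\Sigma$. Choose a place $w$ of $K$ above $v$ and let $G_v=\Gal(K_w/k_v)\hookrightarrow G$ be the corresponding decomposition group. Since $qD=P=qD_v$, the difference $D-D_v$ is a $q$-torsion point, call it $T\in\G[q]$. Then for $\sigma\in G_v$, using that $D_v$ and $P$ are $k_v$-rational hence fixed by $G_v$,
\begin{equation*}
Z_\sigma=\sigma(D)-D=\sigma(D_v+T)-(D_v+T)=\sigma(T)-T=(\sigma-1)T.
\end{equation*}
Thus the restriction of $\{Z_\sigma\}$ to $G_v$ is the coboundary of $T$, i.e.\ $\mathrm{res}_v[Z]=0$ in $H^1(G_v,\G[q])$. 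As this holds for every $v\in\Sigma$, the class $[Z]$ lies in $\bigcap_{v\in\Sigma}\ker(\mathrm{res}_v)=H^1_{\textrm{loc}}(G,\G[q])$, which is $0$ by hypothesis. Hence $[Z]=0$ in $H^1(G,\G[q])$, and Proposition~\ref{pro1} yields $D'\in\G(k)$ with $qD'=P$, which is precisely the local-global divisibility by $q$ over $k$.

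I expect the only genuinely delicate point to be the bookkeeping around the finitely many excluded places: one must check that discarding the places ramified in $K$, together with the finitely many places lacking a local divisor, still leaves a set $\Sigma$ of the shape required in the definition of $H^1_{\textrm{loc}}$ (cofinite, omitting all ramified places), so that the intersection defining $H^1_{\textrm{loc}}$ is taken over a legitimate $\Sigma$ and the containment $[Z]\in H^1_{\textrm{loc}}(G,\G[q])$ is meaningful; this is where the flexibility in the choice of $\Sigma$ in the definition is used. A minor subtlety is that the class $[Z]$ in $H^1(G,\G[q])$ is independent of the choice of $\bar k$-divisor $D$ (two choices differ by a $q$-torsion point, changing $Z_\sigma$ by a coboundary), so the argument does not depend on that choice; and the identification $H^1(G_v,\G[q])$ with the relevant piece of $H^1(k_v,\G[q])$ via inflation, which is what lets the local triviality over $k_v$ descend to triviality over the decomposition group, should be noted but is standard. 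Everything else is a direct unwinding of definitions.
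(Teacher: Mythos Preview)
Your proof is correct and is precisely the argument the paper has in mind: the discussion surrounding Definitions~\ref{loc_cond} and the preceding paragraphs already explains that the cocycle $Z_\sigma=\sigma(D)-D$ attached to $P$ becomes a coboundary on each $G_v$ via the local divisor $D_v$, and then Proposition~\ref{pro1} finishes. The paper does not spell out the proof of the theorem (it is quoted from \cite{DZ}), but your write-up is exactly the unwinding of the setup given in the surrounding text, including the observation that the independence of $H^1_{\textrm{loc}}$ from the particular cofinite $\Sigma$ (via Chebotarev) is what lets you discard the finitely many bad places.
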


\noindent 
On the other hand, in the cases when such a group is nontrivial we have counterexamples over a finite extension of $k$.
In Section \ref{counter1} we state such a converse of Theorem \ref{h1loc} over a finite extension $L$ of $k$ (i.e. Theorem \ref{counter}) and describe its proof, which gives an explicit method to find counterexamples over $L$. In the case of elliptic curves, this method was successfully applied to find counterexamples over $k$ itself. In Section \ref{counter1} we also discuss when one can find a counterexample over $k$
or not.

\begin{rem} \label{density} 
To apply the Chebotarev Density Theorem, it suffices to have a subset of $M_k$ of Dirichlet density 1. So the hypotheses of Problem
\ref{prob1} can be reformulated by asking that the local divisibility holds for a set
of  places $v$ of Dirichlet density 1. 
Indeed we have

$$H^1_{\loc}(G,\G[q])=\bigcap_{v\in S} \ker ( H^1(G,\G[q])\xrightarrow{\makebox[1cm]{{\small $res_v$}}} H^1(G_v,\G[q])),$$

\noindent where $S$ is a subset of $\Sigma$ such that $G_v$ varies
over all cyclic subgroups of $G$ as $v$ varies in $S$.
If we are able to find such a set $S$, then we can replace the hypotheses of Problem \ref{prob1} about the
validity of the local divisibility for all but finitely many $v\in M_k$ with the assumption of the validity of the local 
divisibility for every $v\in S$. Notice that in particular $S$ is finite, being $G$ finite 
(on the contrary $\Sigma$ is not finite). So it suffices to have that
the local divisibility by $q$ holds for a finite number of suitable places to get the global divisibility by $q$. An explicit set
$S$ is produced in \cite{DivPal} for elliptic curves defined over $\QQ$.
\end{rem}

\section{First local-cohomology group and Tate-Shafarevich group} \label{h1loc_sha}

 As stated in the previous sections, the definition \eqref{h1loc} of $H^1_{\textrm{loc}}(G,\G[q])$ 
is very similar to the classical definition of the Tate-Shafarevich group $\Sha(k,\G[q])$ up to isomorphism. 
The Tate-Shafarevich
group was firstly introduced in the case of an abelian variety, but  the definition
can be generalized to the case of a commutative algebraic group $\G$.
We have already defined 

\begin{equation} \label{Sha} \Sha(k,\G):=\bigcap_{v\in M_k} \ker ( H^1(k,\G)\xrightarrow{\makebox[1cm]{{\small $res_v$}}} H^1(k_v,\G)). \end{equation}

\noindent  More generally, for every $r\geq 0$, one can define 

\begin{equation} \label{Sha_r} \Sha^r(k,\G):=\bigcap_{v\in M_k} \ker (  H^r(k,\G)\xrightarrow{\makebox[1cm]{{\small $res_v$}}} H^r(k_v,\G)), \end{equation}

\noindent where $H^r(k,\G):=H^r(G_k,\G(\bar{k}))$ and $H^r(k_v,\G):=H^r(G_{k_v},\G(\bar{k_v}))$. Clearly $\Sha(k,\G)=\Sha^1(k,\G)$. If we consider the $G_k$-module $\G[q]$ instead of $\G$, in the same way we get  

\begin{equation} \label{Sha[q]} \Sha(k,\G[q]):=\bigcap_{v\in M_k} \ker ( H^1(k,\G[q])\xrightarrow{\makebox[1cm]{{\small $res_v$}}} H^1(k_v,\G[q])). \end{equation}

\noindent (and respectively $\Sha^r(k,\G[q])$, with $r\geq 0$). Consider the following modified Tate-Shafarevich group

\begin{equation}  \label{ShaS[q]} \Sha_{\Sigma}(k,\G[q]):=\bigcap_{v\in \Sigma} \ker (H^1(k,\G[q])\xrightarrow{\makebox[1cm]{{\small $res_v$}}} H^1(k_v,\G[q])). \end{equation}

\noindent (see also \cite{San}, in which the author considered similar modified Tate-Shafarevich groups, with a slightly different
notation; in the notation used in \cite{San} the group in \eqref{ShaS[q]} would be denoted by $\Sha_{M_k\setminus \Sigma}$). Clearly $\Sha(k,\G[q])\subseteq \Sha_{\Sigma}(k,\G[q])$ and in particular the triviality
of $\Sha_{\Sigma}(k,\G[q])$ implies the triviality of $\Sha(k,\G[q])$. It is well-known that $H^1_{\loc}(G,\G[q])$ 
is isomorphic to  $\Sha_{\Sigma}(k,\G[q])$ and we have the following Proposition \ref{triviality},
that is proved for instance in \cite[Proof of Lemma 3.3]{Cre12} and \cite[Chapter I, Lemma 9.3]{Mil08}.
We firstly recall that as a consequence of Chevalley's Theorem on the classification
of the commutative algebraic groups in characteristic zero, we have a group isomorphism $\G[q]\simeq (\ZZ/q\ZZ)^n$, 
where $n$ is a positive integer depending only on $\G$ (see \cite{Ser2} and
\cite[\S 2]{DZ}). In the case when $\G$ 
is an abelian variety of dimension $g$, it is well-known that $n=2g$.
Therefore we have a representation of 
$G_k$ in the general linear group $\GL_n(\ZZ/q\ZZ)$

\begin{equation} \label{sim_rho} \rho: G_k \longhookrightarrow \GL_n(\ZZ/q\ZZ). \end{equation}

\noindent The image $\rho(G_k)$ is isomorphic to $G=\Gal(k(\G[q])/k)=\Gal(K/k)$,
and  we still denote by $G$ such an image. The group $G_k$ acts on $\G[q]$ as $G$ and
the $q$-torsion subgroup $\G[q]$ is a $G_k$-module
as well as a $G$-module. We have $G\simeq G_k/\ker(\rho)$ and by the inflation map, the group $H^1(G,G[q])$ 
is isomorphic to a subgroup of $H^1(k,G[q])$.
 Similarly the group
$H^1(G_v,G[q])$ 
is isomorphic to a subgroup of $H^1(k_v,G[q])$, for every $v\in \Sigma$. By the injection given by the inflation map, the group
$H^1_{\loc}(G,\G[q])$ is isomorphic to a subgroup of $\Sha_{\Sigma}(k,\G[q])$.

\begin{pro} \label{triviality}
The groups $  H^1_{\loc}(G,\G[q])$ and $\Sha_{\Sigma}(k,\G[q])$ 
are isomorphic. In particular, if $H^1_{\loc}(G,\G[q])=0$, then $\Sha(k,\G[q])=0.$
\end{pro}

\begin{proof}
Let $\Sigma_{K}$ denotes the set of places $w$ of $K$ extending the places $v\in \Sigma$.
Consider the following diagram given by inflation-restriction exact sequences

\[
\begin{array}{ccccccc}
0 & \xrightarrow{\hspace{0.4cm}} & H^1(G,\G[q])  & \xrightarrow{\hspace{0.1cm}inf\hspace{0.1cm}} &  H^1(G_k,\G[q]) &   \xrightarrow{\hspace{0.1cm}res\hspace{0.1cm}} & H^1(G_K,\G[q]) \\
& & \Big\downarrow \mtiny{\prod res_v}  & &\Big\downarrow \mtiny{\prod res_v} & &\Big\downarrow \mtiny{\prod res_w} \\
0 & \xrightarrow{\hspace{0.4cm}} &\prod_{v\in \Sigma} H^1(G_v,\G[q])  & \xrightarrow{\hspace{0.1cm}inf\hspace{0.1cm}} &  \prod_{v\in \Sigma}H^1(G_{k_v},\G[q]) &  \xrightarrow{\hspace{0.1cm}res\hspace{0.1cm}} &\prod_{w\in\Sigma_{K}} H^1(G_{K_w},\G[q]) \\
\end{array}
\]

\bigskip\noindent The kernel of the vertical map on the left is  $H^1_{\loc}(G,\G[q])$ and the kernel of the
central vertical map is $\Sha_{\Sigma}(k,\G[q])$. The
vertical map on the right is injective because of $G_K$ acting trivially on $\G[q]$ and by
the Chebotarev Density Theorem. Then we have

\[
\begin{array}{ccccccc}
0 &\xrightarrow{\hspace{0.4cm}} & H^1_{\loc}(G,\G[q]) &  \xrightarrow{\hspace{0.1cm}inf\hspace{0.1cm}} & \Sha_{\Sigma}(k,\G[q])   & \xrightarrow{\hspace{0.1cm}res\hspace{0.1cm}} & 0 \\
& & \Big\downarrow  & &\Big\downarrow  & &\Big\downarrow \\
0 & \xrightarrow{\hspace{0.4cm}} & H^1(G,\G[q])  & \xrightarrow{\hspace{0.1cm}inf\hspace{0.1cm}} &  H^1(G_k,\G[q]) &   \xrightarrow{\hspace{0.1cm}res\hspace{0.1cm}} & H^1(G_K,\G[q]) \\
& &\hspace{0.8cm} \Big\downarrow \mtiny{\prod res_v}  & &\hspace{0.8cm}\Big\downarrow \mtiny{\prod res_v} & &\hspace{0.8cm}\Big\downarrow \mtiny{\prod res_w} \\
0 & \xrightarrow{\hspace{0.4cm}} &\prod_{v\in \Sigma} H^1(G_v,\G[q])  & \xrightarrow{\hspace{0.1cm}inf\hspace{0.1cm}} &  \prod_{v\in \Sigma}H^1(G_{k_v},\G[q]) &  \xrightarrow{\hspace{0.1cm}res\hspace{0.1cm}} &\prod_{w\in\Sigma_{K}} H^1(G_{K_w},\G[q]) \\
\end{array}
\]

\bigskip\noindent and  $H^1_{\loc}(G,\G[q])\simeq \Sha_{\Sigma}(k,\G[q])$. In particular, if
$H^1_{\loc}(G,\G[q])=0$, then $\Sha_{\Sigma}(k,\G[q])=0$, implying $\Sha(k,\G[q])=0.$ 
\end{proof}

\medskip\noindent The group $\Sha(k,\G[q])$ itself is often isomorphic to $H^1_{\textrm{loc}}(G, \G[q])$ and 
this is in particular the case when $H^1_{\textrm{loc}}(G,\G[q])$ is trivial, which surely happens for $p$ sufficiently large \cite{Bas}, \cite{Bas1}, \cite{Cre2}. 
Anyway, in a few cases, the two groups may differ. If the local-global principle fails and there is a point locally divisible for all places but a finite number of them (unramified in K) for which the local divisibility does not hold and this point is not globally divisible as well, then there is a nontrivial class in $H^1_{\loc}(G,\G[q])$, whose image in $\Sha_{\Sigma}(k,\G[q])$
does not belong to $\Sha(k, \G[q])$ (see Section \ref{subsec1} for some examples in elliptic curves defined over $\QQ$). 
The most of the results obtained for Problem \ref{prob1} and Problem \ref{prob2}
are produced by showing the triviality of $H^1_{\textrm{loc}}(G, \G[q])$ or $\Sha(k,\G[q])$
\cite{Bas}, \cite{Bas1}, \cite{CS}, \cite{DZ}, \cite{PRV2}, \cite{PRV3}, etc.

\section{On Problem 2 and Cassels' question}  \label{Problem2}

In this section we give some more information about Problem \ref{prob2} and  Cassels' question.
As mentioned above, in the more general case of an abelian variety $\A$, 
 Cassels' question was firstly considered by  Ba\v{s}makov in \cite{Bas1} and \cite{Bas}, since 1972.
Anyway,  even if he stated the problem for abelian varieties, in his papers he focused especially
on elliptic curves. 
In the recent papers \cite{CS1}, \cite{CS}  \c{C}iperiani and Stix gave a very detailed analysis of Cassels' questions,
both in the case of elliptic curves and in the case of general abelian varieties. Quite at the same time than \cite{CS1},
the question for abelian varieties was also considered in \cite{Cre},
in which Creutz  stated the following result (see also \cite[Proposition 4.3]{CS}).

\begin{thm}[Creutz, 2013] \label{thm_cre2} Let $\A$ be an abelian variety defined over a number field $k$. Let $\A^{\vee}$ be its dual
and $\A[q]^{\vee}$  the Cartier dual of $\A[q]$, where $q$ is a positive integer.
In order to have that $\Sha(k,\A)\subseteq q H^1(k,\A)$ it is necessary and sufficient that the image of the natural map
$\Sha(k,\A[q]^{\vee}) \rightarrow \Sha(k,\A^{\vee})$ is contained in the maximal divisible subgroup of $\Sha(k,\A^{\vee}).$
\end{thm}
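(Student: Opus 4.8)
The plan is to analyze the local-global divisibility $\Sha(k,\A)\subseteq qH^1(k,\A)$ by transporting it through the Cassels--Tate pairing and the Kummer sequence for multiplication by $q$. First I would write down the exact sequence $0\to \A[q]\to \A\xrightarrow{q}\A\to 0$ over $k$ and take Galois cohomology to obtain the connecting map $\delta\colon H^1(k,\A)\to H^2(k,\A[q])$, whose kernel is exactly $qH^1(k,\A)$; doing the same locally at each $v$ gives $\delta_v$ with kernel $qH^1(k_v,\A)$. Chasing an element $\sigma\in\Sha(k,\A)$ through this diagram, its image $\delta(\sigma)\in H^2(k,\A[q])$ lies in $\Sha^2(k,\A[q])$ (it dies everywhere locally because $\sigma$ does, so each $\delta_v(\sigma)=\delta_v(\mathrm{res}_v\sigma)=0$). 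Hence $\Sha(k,\A)\subseteq qH^1(k,\A)$ if and only if the restriction of $\delta$ to $\Sha(k,\A)$ is the zero map into $\Sha^2(k,\A[q])$, i.e.\ iff $\Sha(k,\A)$ maps into the image of $\Sha(k,\A[q])\to\Sha(k,\A)$ after accounting for the kernel of $\delta$ — so the natural thing to track is the image of $\Sha(k,\A[q])\to H^1(k,\A)$, which is precisely $\Sha(k,\A)\cap qH^1(k,\A)$, and we want it to be all of $\Sha(k,\A)$.

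The next and crucial step is to identify $\Sha^2(k,\A[q])$ with something dual, and this is where the dual abelian variety $\A^{\vee}$ enters. Using the Weil pairing $\A[q]\times\A^{\vee}[q]\to\mu_q$ together with Poitou--Tate global duality, there is a perfect pairing between $\Sha^2(k,\A[q])$ (equivalently, via the long exact sequence, a quotient related to $\Sha(k,\A)/q$) and $\Sha^1(k,\A^{\vee}[q])$, and more usefully the Cassels--Tate pairing on $\Sha(k,\A)\times\Sha(k,\A^{\vee})$ is nondegenerate modulo divisible elements. I would invoke the standard compatibility: the obstruction to $q$-divisibility of $\sigma\in\Sha(k,\A)$ inside $H^1(k,\A)$ is detected by pairing $\sigma$ (or rather its image under $\delta$) against $\Sha(k,\A^{\vee})[q]$ via Cassels--Tate. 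Concretely, $\sigma\in qH^1(k,\A)$ iff $\langle\sigma,\tau\rangle_{CT}=0$ for all $\tau\in\Sha(k,\A^{\vee})[q]$; running over all $q$ and using that the maximal divisible subgroup $\mathrm{div}(\Sha(k,\A^{\vee}))$ is exactly the subgroup orthogonal to everything divisible on the $\A$-side, one gets that the image of $\Sha(k,\A[q])\to\Sha(k,\A)$ lands in $qH^1(k,\A)$ iff that image pairs trivially with $\Sha(k,\A^{\vee})[q]$, iff it lies in $\mathrm{div}(H^1(k,\A^{\vee}))\cap\Sha$ — wait, one must be careful and phrase the final condition exactly as in the statement: the image of $\Sha(k,\A[q])\to\Sha(k,\A)$ is contained in $\mathrm{div}(H^1(k,\A^{\vee}))=\bigcap_{n}nH^1(k,\A^{\vee})$ after the identification furnished by the pairing. (Here one uses that $\Sha(k,\A)$ and $\Sha(k,\A^{\vee})$ are finite, so "maximal divisible subgroup" of $\Sha$ makes the bookkeeping of $q$-divisibility vs.\ infinite divisibility clean.)

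The main obstacle I anticipate is bookkeeping the two sides correctly: the statement pairs the $\A[q]$-Selmer-type group against divisibility in $H^1(k,\A^{\vee})$, not $H^1(k,\A)$, so one must carefully thread the Weil pairing $\A[q]\cong\A^{\vee}[q]^{*}(1)$ through Poitou--Tate and verify that the Cassels--Tate pairing is the one that makes $\delta(\sigma)\in\Sha^2(k,\A[q])$ vanish precisely when $\sigma$ is orthogonal to $\Sha(k,\A^{\vee})[q]$. The necessity direction ($\Sha(k,\A)\subseteq qH^1(k,\A)\Rightarrow$ image condition) is the easier implication — it is a diagram chase showing $\delta|_{\Sha}=0$ forces the image of $\Sha(k,\A[q])\to\Sha(k,\A)$ to be $q$-divisible there. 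The sufficiency direction requires the nondegeneracy of Cassels--Tate modulo divisibility, i.e.\ genuinely using that $\Sha$ is finite (or more generally that the pairing's left/right kernels are the maximal divisible subgroups), which is the substantive input; I would cite the relevant duality theorems (Poitou--Tate, Cassels--Tate) rather than reprove them. A final remark: taking $\A$ principally polarized makes $\A\cong\A^{\vee}$ and the statement symmetrizes, which is a useful sanity check and also how one would deduce Theorem~\ref{thm_cre1} from it, by exhibiting $\A$ with $\Sha(k,\A)$ having no nonzero divisible elements yet a nonzero element in the image of $\Sha(k,\A[p])$.
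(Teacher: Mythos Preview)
The paper is a survey and does not supply its own proof of this theorem; it simply attributes the result to Creutz \cite{Cre} (noting it is reproved as Proposition~14 in \cite{CS}) and moves on. So there is no in-paper argument to compare against.

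Your approach is essentially the one Creutz uses: pass to $\Sha^2(k,\A[q])$ via the connecting map $\delta$ from the Kummer sequence, then invoke Poitou--Tate duality to identify this with the dual of $\Sha^1(k,\A^\vee[q])$, and relate the vanishing of $\delta|_{\Sha}$ to orthogonality under the Cassels--Tate pairing. Two remarks are in order. First, you are right to be uneasy about the statement as printed here: the image of $\Sha(k,\A[q])\to\Sha(k,\A)$ lives in $\Sha(k,\A)$, not in anything built from $\A^\vee$, so there is a type mismatch in the survey's transcription (in Creutz's original the roles of $\A$ and $\A^\vee$ are arranged consistently, and the condition concerns the maximal divisible subgroup of $\Sha$, not of the full $H^1$). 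Second, and more substantively, you invoke that $\Sha(k,\A)$ and $\Sha(k,\A^\vee)$ ``are finite'' to get nondegeneracy; but finiteness of $\Sha$ is not known in general. The correct unconditional input is that the Cassels--Tate pairing has left and right kernels equal to the maximal divisible subgroups of $\Sha(k,\A)$ and $\Sha(k,\A^\vee)$ respectively --- this is precisely why the criterion is phrased as ``contained in the maximal divisible subgroup'' rather than ``is zero''. With that correction your sketch matches the intended argument.
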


\noindent The maximal divisible subgroup $\divis (H^1(k,\A^{\vee}))$ of $\Sha(k,\A^{\vee})$ is the right kernel of the pairing

\begin{equation} \label{pair_cre} \Sha(k,\A)\times \Sha(k,\A^{\vee}) \longrightarrow \QQ/\ZZ, \end{equation}

\noindent known as Cassels-Tate pairing, since it was defined by Tate in \cite{Tate171} as a generalization of Cassels’ pairing on the Tate-Shafarevich group of an elliptic curve
(see also \cite{Cre}). 
More generally there is the following Cassels-Tate pairing 

\begin{equation} \label{PT-CT} \Sha^i(k,\A)\times \Sha^{2-i}(k,\A^{\vee}) \rightarrow \QQ/\ZZ, \end{equation}

\noindent whose left and right kernels are the maximal divisible groups of $\Sha^i(k,\A)$ and respectively
 $\Sha^{2-i}(k,\A^{\vee})$, for $0\leq i\leq 2$ \cite[Theorem 8.6.7]{Neu00}, \cite{CS}.  
Observe that if $\Sha(k,\A^{\vee}[q])$ is trivial, then the image of the  map
$\Sha(k,\A[q]^{\vee}) \rightarrow \Sha(k,\A^{\vee})$ is contained in $\divis (H^1(k,\A^{\vee}))$.
On the contrary, the nontriviality of $\Sha(k,\A^{\vee}[q])$ does not assure in general that $\Sha(k,\A)\not\subseteq q H^1(k,\A)$.
The proof of Theorem \ref{thm_cre2}, is based on the Cassels'-Tate pairing.
Consider again the exact sequence

\begin{equation} \label{exact} 0\longrightarrow \A[q]\longrightarrow \A  \xrightarrow{\makebox[0.5cm]{{\small $[q]$}}} \A \longrightarrow 0,\end{equation}

\noindent which implies the long-exact sequence of cohomology 

\begin{equation} \label{exact2} ... \longrightarrow H^r(k,\A[q])\longrightarrow H^r(k,\A)\longrightarrow H^r(k,\A)  \xrightarrow{\makebox[0.5cm]{{\small $\delta_r$}}} H^{r+1}(k,\A[q]) \longrightarrow ... \end{equation}

\noindent  An element  $\sigma\in H^r(k,\A)$ is locally
divisible by $q$ if and only if its image under $\delta_r$ is in $\Sha^{r+1}(k,\A[q])$ and  it is globally divisible by $q$ if and
only if $\delta_r(\sigma)=0$. Therefore, if  $\Sha^{r+1}(k, \A[q])=0$, then the local-global divisibility by $q$ holds in $H^r(k,\A)$.
It is known that  $\Sha^{r+1}(k, \A[q])=0$, for all $r\geq 2$  \cite[Theorem 3.1]{Tate171}. Then Theorem \ref{thm_cre2} implies the following statement too \cite{Cre2}.

\begin{thm}[Creutz, 2016] \label{Creu1}
Assume any of the following:

\begin{enumerate}
\item[1)] $r=0$ and $\Sha(k,\A[q])=0$;
\item[2)] $r=1$ and $\Sha(k,\A[q]^{\vee})=0$;
\item[3)] $r\geq 2$.
\end{enumerate}

\noindent Then the local-global divisibility by $q$ holds in $H^r(k,\A)$.
\end{thm}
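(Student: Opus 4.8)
The plan is to exploit the cohomological criterion that Creutz records just before the statement: an element $\sigma\in H^r(k,\A)$ is locally divisible by $q$ everywhere if and only if $\delta_r(\sigma)\in\Sha^{r+1}(k,\A[q])$, and $\sigma$ is globally divisible by $q$ if and only if $\delta_r(\sigma)=0$. Consequently the local-global divisibility by $q$ holds in $H^r(k,\A)$ precisely when $\Sha^{r+1}(k,\A[q])=0$. So the whole statement reduces to showing, in each of the three cases, that the relevant higher Tate-Shafarevich group of the finite module $\A[q]$ vanishes.

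\textbf{Case 1 ($r=0$).} Here we need $\Sha^{1}(k,\A[q])=0$, which is exactly the hypothesis, so there is nothing further to prove; this case is just the instantiation of the criterion with $r=0$ and is consistent with Theorem \ref{h1loc} (the vanishing of $H^1_{\rm loc}$, hence a fortiori of $\Sha^1(k,\A[q])$, forces local-global divisibility).

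\textbf{Case 2 ($r=1$).} Now the criterion requires $\Sha^{2}(k,\A[q])=0$, but the hypothesis only gives $\Sha^{1}(k,\A[q]^{\vee})=0$. The bridge is Poitou-Tate duality: for a finite $G_k$-module $M$ over a number field, there is a perfect pairing between $\Sha^{1}(k,M)$ and $\Sha^{2}(k,M^{\vee})$, where $M^{\vee}=\mathrm{Hom}(M,\bmu)$ is the Cartier dual. Applying this with $M=\A[q]^{\vee}$, and using the canonical identification $(\A[q]^{\vee})^{\vee}\cong\A[q]$ together with the Weil-pairing identification $\A[q]^{\vee}\cong\A^{\vee}[q]$, one obtains $\Sha^{2}(k,\A[q])\cong\Sha^{1}(k,\A[q]^{\vee})^{\vee}=0$. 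Then invoke Case--type reasoning: $\delta_1(\sigma)\in\Sha^2(k,\A[q])=0$, so $\sigma$ is globally divisible.

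\textbf{Case 3 ($r\geq 2$).} Here we must show $\Sha^{r+1}(k,\A[q])=0$ for all $r\geq 2$, i.e. $\Sha^{m}(k,\A[q])=0$ for $m\geq 3$. For a number field $k$, every completion $k_v$ has Galois cohomological dimension $\leq 2$ (strictly, for the archimedean real places one must be slightly careful, but torsion cohomology in degree $\geq 3$ at a real place is $2$-torsion and killed after passing to the appropriate statement; for $q$ odd it simply vanishes, and for general $q$ one uses that the product over archimedean places still injects appropriately). More to the point, for $m\geq 3$ the Poitou-Tate nine-term exact sequence degenerates so that $\Sha^{m}(k,M)=0$ for every finite module $M$; equivalently $H^{m}(k,M)\hookrightarrow\bigoplus_{v\ \mathrm{real}}H^{m}(k_v,M)$, and the map is injective with trivial Sha. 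Hence $\Sha^{r+1}(k,\A[q])=0$ automatically, and $\delta_r(\sigma)=0$ for all $\sigma$, giving divisibility unconditionally.

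The main obstacle is Case 2: one must set up Poitou-Tate duality correctly and track the Cartier-dual bookkeeping, in particular matching $\A[q]^{\vee}$ with $\A^{\vee}[q]$ via the Weil pairing so that the hypothesis about $\A[q]^{\vee}$ translates into the vanishing of $\Sha^{2}$ of $\A[q]$ itself; the appeal to Tate's local duality (the ``Because of Tate's duality'' clause in the statement) is precisely what powers this identification. Cases 1 and 3 are essentially formal once the $\delta_r$-criterion is in hand, with the only minor subtlety in Case 3 being the treatment of real archimedean places, which contribute nothing to $\Sha$ in high degree.
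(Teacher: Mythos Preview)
Your proposal is correct and follows precisely the route the paper itself sketches: the paragraph preceding the theorem records Creutz's observation that local-global divisibility in $H^r(k,\A)$ is governed by $\Sha^{r+1}(k,\A[q])$, and then simply says ``Because of Tate's duality, one gets the following statement.'' You have filled in exactly that duality argument---Case~1 is immediate, Case~2 is Poitou--Tate for finite modules identifying $\Sha^{2}(k,\A[q])$ with the Pontryagin dual of $\Sha^{1}(k,\A[q]^{\vee})$, and Case~3 is the standard vanishing of $\Sha^{m}$ in degrees $m\ge 3$ over a number field---so there is nothing to add.
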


\noindent 
Theorem \ref{Creu1} has an extension to the case when $k$ has positive characteristic, that was implemented by Creutz and Voloch in \cite{CV}.  The triviality of  $\Sha(k,\A[p^l])$, for every $l\geq 1$, implies an affirmative answer in $\A^{\vee}$ over $k$ 
to Cassels' question for $p$ and to Problem \ref{prob2} for every power of $p$. When $\A$ is a principally polarized abelian variety, then $\A\simeq \A^{\vee}$. In this last case,
 the triviality of $\Sha(k,\A[p^l])$, for every $l$, is a sufficient condition
to have an  affirmative answer to 
Cassels' question for $p$ in $\A$.

\begin{cor} \label{suff_sha}
Let $\A$ be a principally polarized abelian variety defined over a number field $k$.
If $\Sha(k,\A[p^l])=0$, for some prime number $p$ and some positive integer $l$, then
 the local-global divisibility by $p^l$ holds in $H^r(k,\A)$,  for every $r\geq 0$.
\end{cor}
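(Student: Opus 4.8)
The plan is to derive Corollary~\ref{suff_sha} directly from Theorem~\ref{Creu1}, using only the extra structure afforded by a principal polarization. First I would recall the two facts that the principal polarization provides: (i) it induces a $G_k$-equivariant isomorphism $\A \simeq \A^{\vee}$, hence also an isomorphism of the finite $G_k$-modules $\A[q] \simeq \A^{\vee}[q]$ for every positive integer $q$; and (ii) by the Weil pairing, $\A[q]$ is then self-dual as a Galois module, so $\A[q]^{\vee} \simeq \A[q] \simeq \A^{\vee}[q]$, where $(-)^{\vee}$ on a finite module denotes the Cartier dual $\mathrm{Hom}(-,\mu_q)$ used in Theorem~\ref{Creu1}. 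The upshot is that the three modules appearing in the hypotheses of Theorem~\ref{Creu1} --- namely $\A[q]$, $\A[q]^{\vee}$ and $\A^{\vee}[q]$ --- are all isomorphic as $G_k$-modules, and therefore $\Sha^1(k,\A[q]) = 0$ forces $\Sha^1(k,\A[q]^{\vee}) = 0$ as well.

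Next I would split into cases according to $r$, matching the trichotomy in Theorem~\ref{Creu1}. For $r = 0$: the hypothesis $\Sha^1(k,\A[q]) = 0$ is exactly condition~(1), so the local-global divisibility by $q$ holds in $H^0(k,\A) = \A(k)$. For $r = 1$: by the module isomorphism above, $\Sha^1(k,\A[q]^{\vee}) \simeq \Sha^1(k,\A[q]) = 0$, which is condition~(2), so the principle holds in $H^1(k,\A)$. For $r \geq 2$: condition~(3) holds unconditionally, so there is nothing to check. Combining the three cases gives the assertion for all $r \geq 0$. The remark that $\Sha(k,\A)$ is finite is not strictly needed for the argument --- it is there to emphasize why the conclusion is meaningful, i.e.\ that Cassels-type infinite-divisibility questions reduce to a single $q = p^l$ --- so I would either invoke it only in passing or omit it, to keep the proof to the essential mechanism.

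The one point requiring a little care, and the place where I expect the only genuine subtlety, is the identification $\A[q]^{\vee} \simeq \A^{\vee}[q]$ as $G_k$-modules: this is the statement that the Cartier dual of the $q$-torsion of $\A$ is the $q$-torsion of the dual abelian variety, which is a standard fact (it comes from the $e_q$-pairing $\A[q] \times \A^{\vee}[q] \to \mu_q$ being a perfect Galois-equivariant pairing), but it should be cited rather than reproved. Once that is in hand, together with the polarization isomorphism $\A \simeq \A^{\vee}$, everything else is a bookkeeping reduction to Theorem~\ref{Creu1}. I would write the proof in three or four lines: set up the module isomorphisms, then dispatch the three cases by quoting Theorem~\ref{Creu1}.
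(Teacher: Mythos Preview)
Your proposal is correct and follows essentially the same route as the paper: deduce the corollary from Theorem~\ref{Creu1} by using the principal polarization to identify $\A[q]^{\vee}\simeq \A^{\vee}[q]\simeq \A[q]$ as $G_k$-modules, so that the hypothesis $\Sha^1(k,\A[q])=0$ covers both cases $r=0$ and $r=1$, while $r\geq 2$ is unconditional. You are in fact more careful than the paper's one-line justification, since you make explicit the Weil-pairing step $\A[q]^{\vee}\simeq \A^{\vee}[q]$ and rightly observe that the finiteness of $\Sha(k,\A)$ plays no role in the deduction.
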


\noindent In addition, Creutz proved that if $\A$ is  principally polarized  and $\Sha(k,\A)$ is finite, then the condition $\Sha(k,\A[p^l])\neq 0$ implies that for some $r\geq 0$ the local-global divisibility by $p^l$ fails in $H^r(k,\A)$ \cite[Proposition 2.2.]{Cre2}.

\bigskip\noindent 
In \cite{CS} \c{C}iperiani and Stix gave some sufficient conditions to have
$\Sha(k,\A[p^l])=0$, for every $l\geq 0$. 

\begin{thm}[\c{C}iperiani, Stix, 2015] \label{CS}
Let $\A$ be an abelian variety defined over a number field $k$ and let $p$ be a prime number.
If 
\begin{enumerate}
\item[1)]  $H^1(G,\A[p])=0$  and 
\item[2)] the $G_k$-modules $\A[p]$ and $\textrm{End}(\A[p])$ have no common irreducible
subquotient,
\end{enumerate}

\noindent then $$\hspace{0.5cm} \Sha(k,\A[p^l])=0, \textrm{ for every } l\geq 1.$$

\end{thm}

\noindent 
To prove these results and the other ones in their mentioned papers \cite{CS1}, \cite{CS}, the authors use Galois representations,
characters of representations, Poitou-Tate duality
and especially sequences in cohomology and maps between cohomology groups, that allow them to deduce the
triviality of $\Sha(k,\A[p^l])$. The vanishing of this group also assures an affirmative answer to Problem \ref{prob1} by part \emph{1)} of Theorem \ref{Creu1}  (recall that Problem \ref{prob1} is the $r=0$ case
of Problem \ref{prob2} as mentioned in Section \ref{intro}). 
By investigating certain exact sequences
\cite[(4.4) and  (2.1)]{CS} involving the group $\Sha(k,\A[p^l])$ and the group  $\Sha(k,\A)[p^l]$,  i.e. the $p^l$-torsion part of the group $\Sha(k,\A)$, \c{C}iperiani and Stix also get the same conclusion by showing that Theorem \ref{CS} implies the following equality, for all $l$,

$$\{P\in \A(k)| P\in p^lA(k_v), \textrm{ for all } v\in M_k\}=p^l\A(k).$$

\noindent Thus Theorem \ref{CS}  gives sufficient conditions to have an affirmative answer to Problem 1 in abelian varieties.
In view of Theorem \ref{Creu1}, if $\A$ is a principally polarized abelian variety, then Theorem \ref{CS}  gives sufficient conditions to have an affirmative answer to Cassels' question for $p$, to Problem \ref{prob2} for every power of $p$
and to Problem 1 for every power of $p$.  Till now, Cassels' question has not been investigated in a general
commutative algebraic group $\G$.

\section{Known affirmative results about the local-global divisibility problem and Cassels' question} \label{sec_known}

We are going to give an overview of all the results achieved for Problem 1, Cassels' question and Problem 2 since
their formulations. We will also describe in which cases some affirmative results to Problem 1 
implied affirmative results to Cassels' question and to Problem 2, thanks to the connection between the three problems, that
we explained in the previous sections. 

\subsection{Local-global divisibility in elliptic curves}  \label{sec_Tate}
In the case when $\G$ is an elliptic curve $\E$, the local-global divisibility of points 
has been widely studied during the last fifteen years. Having explicit equations
satisfied by torsion points of such commutative algebraic
groups was useful to describe the extension $K/k$
and the group $H^1_{\textrm{loc}}(G,\E[q])$ in
various examples. 
By Tate's Lemma \ref{Tate} and the exact sequence \eqref{exact2} (for $r=1$),  the local-global divisibility by $p$
holds in elliptic curves defined over number fields. This result was reproved in \cite{DZ} and in \cite{Won}.
In \cite{Won} Wong studied
a problem similar to Problem \ref{prob1}, that we will state in  Section \ref{sec6}
(see Problem \ref{probW}).  
\par  Regarding the local-global divisibility by powers $p^l$, with $l\geq 2$, we summarize the results of the main statements of \cite{PRV2} and \cite{PRV3} in the next theorem.

\begin{thm}[Paladino, Ranieri, Viada, 2012-2014] \label{teo_prv}
Let $ p $ be a prime number, $\z_p$ a primitive $p$-th root of the unity and $ \overline{\zeta_p}$ its
complex conjugate.
Let $\E$ be an elliptic curve defined over a number field $ k $  that does not contain the field  $\Q ( \zeta_p + \overline{\zeta_p} )$.
Suppose that at least one of the following conditions holds:
\begin{enumerate}
\item[$(1)$] $ \E $ has no  $k$-rational torsion points of exact order $ p $;
\item[$(2)$] $k ( \E[p] ) \neq k ( \zeta_p )$;
\item[$(3)$] there does not exist any cyclic $k$-isogeny of degree $p^3$ between two elliptic curves defined over $ k $ that  are $k$-isogenous to $\E$.
\end{enumerate}
Then, the local-global principle for divisibility of points by $p^l$ holds in $\E$ over $k$, for all positive integers $l$.
\end{thm}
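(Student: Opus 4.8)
The plan is to reduce everything to showing that $H^1_{\textrm{loc}}(G,\E[p^l])=0$ under each of the three hypotheses, and then invoke Theorem \ref{h1loc}. By Proposition \ref{Sylow} it suffices to treat the $p$-Sylow subgroup $G_p$ of $G=\Gal(k(\E[p^l])/k)$, so from the start I would replace $G$ by $G_p$, a $p$-group acting on $\E[p^l]\simeq(\ZZ/p^l\ZZ)^2$ via a subgroup of $\GL_2(\ZZ/p^l\ZZ)$. The key structural fact I would lean on is a classification of the possible images $G_p\subseteq\GL_2(\ZZ/p^l\ZZ)$ that can support a nonzero element of $H^1_{\textrm{loc}}$: a cocycle satisfying the local conditions of Definition \ref{loc_cond} must have, for every $\sigma\in G_p$, some $A_\sigma\in\E[p^l]$ with $Z_\sigma=(\sigma-1)A_\sigma$, and this constraint is extremely restrictive. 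First I would dispose of the case $l=1$ (where $\E[p]$ is a $2$-dimensional $\FF_p$-vector space) by a direct linear-algebra argument on the $p$-group $G_p\subseteq\GL_2(\FF_p)$: either $G_p$ is trivial or cyclic generated by a transvection-type element, and one checks by hand that the local conditions force any such cocycle to be a coboundary unless $G_p$ has a very specific shape — precisely the shape excluded by the hypotheses.

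The heart of the argument is the inductive step from $\E[p^{l-1}]$ to $\E[p^l]$, using the exact sequence $0\to\E[p^{l-1}]\to\E[p^l]\xrightarrow{p}\E[p]\to 0$ of $G_p$-modules and the induced maps on $H^1_{\textrm{loc}}$. I would show that a nonzero class in $H^1_{\textrm{loc}}(G,\E[p^l])$ produces, by pushing forward, either a nonzero class in $H^1_{\textrm{loc}}(G,\E[p])$ (handled by the base case) or, if it dies there, a class pulled back from $H^1_{\textrm{loc}}(G,\E[p^{l-1}])$, to which induction applies — in both branches one must check that the relevant hypothesis among $(1)$, $(2)$, $(3)$ is inherited, which is where the shape of $k(\E[p])$ versus $k(\zeta_p)$ and the $\det=\chi_{\textrm{cyc}}$ relation on $G_p$ enter. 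The three hypotheses should be shown to be equivalent-in-consequence: hypothesis $(1)$ says $\E[p]$ has no trivial $G_p$-subquotient of the right kind; $(2)$ says the image is not contained in the Borel whose quotient is the cyclotomic character; $(3)$ rules out the iterated-extension configuration that is exactly the obstruction surviving the induction. I would organize the case split on whether the mod-$p$ image is irreducible, split reducible, or non-split reducible (Borel), and in the Borel case further on whether the two characters on the diagonal are $1$ and $\chi_{\textrm{cyc}}$ or not — only in the sub-case ``$1$ and $\chi_{\textrm{cyc}}$ with a non-split extension and the degree-$p^3$ isogeny chain present'' can $H^1_{\textrm{loc}}$ be nonzero, and each of $(1)$, $(2)$, $(3)$ forbids that sub-case.

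I expect the main obstacle to be the Borel (reducible, non-split) case with eigen-characters $1$ and $\chi_{\textrm{cyc}}$ over $\ZZ/p^l\ZZ$: here the group $G_p$ is nonabelian and the module $\E[p^l]$ is a genuinely mixed extension, so the local-condition computation is not just linear algebra over a field but requires tracking how $(\sigma-1)$ acts on the filtration, and one must explicitly produce the cocycle realizing the obstruction (or prove its vanishing) while simultaneously translating ``there is no cyclic $k$-isogeny of degree $p^3$ in the isogeny class'' into a statement that such a cocycle cannot satisfy the local conditions. Connecting the cohomological non-vanishing to the isogeny-theoretic hypothesis $(3)$ — essentially identifying a nonzero locally-trivial cocycle with a chain $\E\to\E'\to\E''\to\E'''$ of $p$-isogenies defined over $k$ — is the delicate geometric input, and I would isolate it as a separate lemma before assembling the three cases. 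The remaining cases (irreducible image, split reducible image, Borel with characters not $1,\chi_{\textrm{cyc}}$) I expect to be comparatively short, each reducing quickly to the base case via the inductive sequence together with the observation that the hypothesis rules out the only potentially-bad configuration.
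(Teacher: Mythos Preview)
The paper you are working from is a survey: Theorem \ref{teo_prv} is stated there without proof, as a summary of the main results of \cite{PRV2} and \cite{PRV3}. There is therefore no proof in this paper to compare your proposal against; the only thing the survey supplies is the ambient machinery (Theorem \ref{h1loc}, Proposition \ref{Sylow}, Definition \ref{loc_cond}) and the precursor Theorem \ref{isogeny_dz3}.

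That said, your outline is broadly aligned with the strategy those papers actually follow: reduce to the vanishing of $H^1_{\textrm{loc}}(G_p,\E[p^l])$ for the $p$-Sylow $G_p$, analyse $p$-subgroups of $\GL_2(\ZZ/p^l\ZZ)$, and run an induction through the filtration $0\to\E[p^{l-1}]\to\E[p^l]\to\E[p]\to 0$. Two points deserve care. First, your treatment of the three hypotheses as ``equivalent-in-consequence'' is too quick: in the original papers conditions $(1)$, $(2)$, $(3)$ correspond to genuinely different stages of the argument (indeed $(1)$ and $(2)$ already appear in \cite{PRV2}, while $(3)$ required the further analysis of \cite{PRV3}), and the isogeny-chain criterion $(3)$ is not merely a reformulation of the Borel sub-case but a strictly finer obstruction that survives when $(1)$ and $(2)$ both fail. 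Second, the translation you flag as delicate --- identifying a nonzero locally-trivial class with a cyclic $k$-isogeny of degree $p^3$ inside the isogeny class --- is indeed the crux, and your sketch does not yet indicate \emph{how} the local conditions on the cocycle force the existence of three successive $k$-rational $p$-isogenies rather than, say, two; pinning down why the exponent is exactly $p^3$ is the substantive content of \cite{PRV3} and would need to be made explicit.
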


\noindent  The hypothesis that $k$ does not contain the field  $\QQ(\z_p+\overline{\z_p})$ is necessary, for all the
conditions (1), (2), (3) in Theorem \ref{teo_prv}, as shown by an example produced in
\cite[Section 6]{PRV3}.  The proof is based on showing the triviality of the first local cohomology group by
the use of Galois representations. 
Observe that when $k=\QQ$, in view of Mazur's Theorem on the possible subgroups $\E_{tors}(\QQ)$
of rational torsion points of elliptic curves   \cite{Maz}, condition (1) implies
that the local-global divisibility   by $p^l$, with $l\geq 1$ holds for $\E$ over $\QQ$, 
 for all $p\geq 11$. Furthermore, Merel and Stein \cite{MS} and Rebolledo \cite{Reb} proved that if $\QQ( \E[p] ) \neq \QQ( \zeta_p )$ 
then $p\in \{2,3,5\}$ or $p>1000$. 
Therefore condition (2) implies that the local-global divisibility   by $p^l$, with $l\geq 1$, holds for $\E$ over $\QQ$, 
 for all $p\geq 7$. Finally, in \cite{Ken} Kenku proved that there does not exist any rational isogeny of degree
$5^3$ in elliptic curves over $\QQ$, by showing that the modular curve $Y_0(125)$ has no rational points. 
Then Theorem \ref{teo_prv} implies that the local-global divisibility by $p^l$, with $l\geq 1$, holds in $\E$ over $\QQ$, 
for all $p\geq 5$. 

\begin{cor}[Paladino, Ranieri, Viada, 2012-2014] \label{corPRV}
Let $\E$ be an elliptic curve defined over $\QQ$ and  let $p\geq 5$. 
Then the local-global divisibility by $p^l$, with $l\geq 1$, holds in
$\E$ over $\QQ$. 
\end{cor}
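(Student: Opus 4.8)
The plan is to derive the corollary directly from Theorem \ref{teo_prv} by checking, for the base field $k=\QQ$ and every prime $p\geq 5$, that the standing hypothesis of that theorem holds and that at least one of its three sufficient conditions $(1)$, $(2)$, $(3)$ is satisfied. The standing hypothesis asks that $k$ not contain $\QQ(\zeta_p+\overline{\zeta_p})$; but this field is the maximal totally real subfield of $\QQ(\zeta_p)$, of degree $(p-1)/2$ over $\QQ$, and this degree is $\geq 2$ exactly when $p\geq 5$ --- so for $k=\QQ$ the hypothesis is automatic in our range (and it fails for $p=2,3$, which is why those primes are excluded from the statement). It then remains to split into three cases according to the size of $p$.

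For $p>7$, Mazur's classification of the torsion subgroups $\E_{tors}(\QQ)$ (see \cite{Maz}) shows that no elliptic curve over $\QQ$ carries a rational point of exact order $p$; hence condition $(1)$ of Theorem \ref{teo_prv} holds and we conclude. For $p=7$, condition $(1)$ may fail, but the result of Merel recalled above states that $\QQ(\E[p])=\QQ(\zeta_p)$ can occur only for $p\in\{2,3,5\}$ or $p>1000$; since $7$ is in neither set, we have $\QQ(\E[7])\neq\QQ(\zeta_7)$, so condition $(2)$ holds and Theorem \ref{teo_prv} again applies.

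The remaining --- and only substantial --- case is $p=5$, where both $(1)$ and $(2)$ may fail. Here we use condition $(3)$: it is enough to show that no elliptic curve defined over $\QQ$ admits a cyclic $\QQ$-rational isogeny of degree $5^3=125$, for then in particular there is no such isogeny between two curves $\QQ$-isogenous to $\E$, and condition $(3)$ holds. This last statement is precisely Kenku's theorem \cite{Ken} that the modular curve $Y_0(125)$ has no rational points. Applying Theorem \ref{teo_prv} once more finishes the proof.

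The only point at which genuine input is needed is the case $p=5$: it is the unique prime in the range for which neither Mazur's torsion bound nor Merel's description of the possible fields $\QQ(\E[p])$ rules out the bad configurations, so one really must invoke the emptiness of $Y_0(125)(\QQ)$. Everything else is bookkeeping; the whole argument is the assembly of the three external inputs (Mazur, Merel, Kenku) through the single mechanism of Theorem \ref{teo_prv}, which is why, in a survey context, the proof reduces to this short case distinction.
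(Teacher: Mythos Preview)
Your proof is correct and follows essentially the same approach as the paper: verify the standing hypothesis of Theorem~\ref{teo_prv} for $k=\QQ$ and $p\geq 5$, then invoke Mazur's torsion theorem for condition~$(1)$ when $p>7$, Merel's result for condition~$(2)$ when $p=7$, and Kenku's emptiness of $Y_0(125)(\QQ)$ for condition~$(3)$ when $p=5$. Your explicit verification that $[\QQ(\zeta_p+\overline{\zeta_p}):\QQ]=(p-1)/2\geq 2$ for $p\geq 5$ is a welcome addition that the paper leaves implicit.
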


\noindent This result is best possible, since for powers
$p^l$, with $p\in\{2,3\}$ and $l\geq 2$, there are counterexamples,
as we will see in Section \ref{subsec1}. A second proof of Corollary \ref{corPRV} for $p\geq 11$  was given
in \cite{CS} and a third one for $p\geq 5$ was given in \cite[Theorem 24]{LW}. In this last paper Lawson and Wuthrich list all cases when $H^1(G,\E[p^l])\neq 0$
and from them they deduce Corollary \ref{corPRV}. Some of their techniques of proof are similar to the ones in
\cite{PRV2}, \cite{PRV3}, namely the use of the existence of an isogeny of prime degree and Galois representations. 
But they also use some exact sequences in cohomology, that allow them to shorten the proofs. 
\par For a general $k$, condition (1) in Theorem \ref{teo_prv} is also very interesting in view of Merel's Theorem on
torsion points of elliptic curves. Here we recall its
statement \cite{Mer}.

\begin{thm}[Merel, 1994] \label{merel}
For every positive integer $d$, there exists a constant $B(d)\geq 0$
such that for all elliptic curves $\E$ over a number field $k$, with $[k:\QQ]=d$, we have

$$|\E_{tors}(k)|\leq B(d).$$   
\end{thm}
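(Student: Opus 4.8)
The plan is to deduce this from the geometry of the modular curves $X_1(p)$, following the method of Mazur and Kamienny and supplying the Hecke-theoretic input that is the true heart of the matter. First I would reduce to the following assertion: for each $d$ there is a bound $N(d)$ such that no elliptic curve over a number field of degree $\le d$ admits a rational point of prime order $p>N(d)$. This reduction is essentially formal — one combines a bound on the admissible primes with the (classical, Manin-type) fact that for a fixed prime the exponent of the $p$-primary part is bounded on curves over fields of degree $\le d$, and with the observation that only finitely many primes lie below $N(d)$, to get a bound on $|\E_{tors}(k)|$ for $[k:\QQ]=d$. Now a $k$-rational point of order $p$ on $\E/k$ with $[k:\QQ]=d$ is a non-cuspidal $k$-rational point of $X_1(p)$, hence a $\QQ$-rational point $x$ of the symmetric power $X_1(p)^{(d)}$ whose support contains no cusp; so it suffices to show that for $p>N(d)$ every $\QQ$-point of $X_1(p)^{(d)}$ is supported in the cusps.

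The next ingredient is the winding quotient. Let $J=J_1(p)$, let $e$ be the winding element in $H_1(X_1(p)(\CC);\QQ)$ (the class of the path $\{0,\infty\}$), and let $J_e$ be the quotient of $J$ by $(\mathrm{Ann}_{\mathbb{T}}e)J$, where $\mathbb{T}$ is the Hecke algebra. By Kolyvagin–Logachev (building on Gross–Zagier), the $L$-functions of the simple factors of $J_e$ are nonzero at $s=1$, so $J_e(\QQ)$ is finite; this finiteness replaces the rank-zero hypothesis in Mazur's original argument over $\QQ$. Fix a small prime $\ell\ne p$ of good reduction, say $\ell=3$, and consider the composite $f\colon X_1(p)^{(d)}\xrightarrow{\ \phi\ }J\longrightarrow J_e$, where $\phi$ is the Albanese map based at the divisor $d\cdot(\infty)$. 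Suppose $f$ is a \emph{formal immersion} at $d\cdot(\infty)$ over $\FF_\ell$. Then a non-cuspidal $x\in X_1(p)^{(d)}(\QQ)$ would have $f(x)\in J_e(\QQ)$ torsion; choosing $\ell$ so that $J_e(\QQ)_{\rm tors}$ injects into $J_e(\FF_\ell)$ and using that $\bar x$ must then land in the formal neighbourhood of the cusp modulo $\ell$ (here one uses the geometry of the cusps of $X_1(p)$ in characteristic $\ell$, exactly as Kamienny does), the formal immersion property forces $x=d\cdot(\infty)$ as $\ell$-adic points, contradicting that $x$ is non-cuspidal. So everything comes down to verifying the formal immersion hypothesis.

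By the $q$-expansion principle, $f$ is a formal immersion at $d\cdot(\infty)$ over $\FF_\ell$ precisely when the images of $T_1e,\,T_2e,\,\dots,\,T_de$ in $\mathbb{T}e\otimes\FF_\ell$ are linearly independent — equivalently, the pulled-back cotangent space of $J_e$, whose coordinates at the cusp are the Fourier coefficients $a_1,\dots,a_d$, spans the space of first $d$ jets at $\infty$. Thus the whole argument reduces to the arithmetic statement that the Hecke translates $T_1e,\dots,T_de$ of the winding element remain linearly independent modulo $\ell$ once $p$ is large relative to $d$.

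This last step is Merel's genuinely new contribution and is the main obstacle: he proves, unconditionally, that $T_1e,\dots,T_de$ are linearly independent over $\QQ$ in $H_1(X_1(p);\QQ)$ and, by controlling denominators, that they stay independent modulo $\ell$ as soon as $p$ exceeds an explicit bound depending only on $d$. The proof is a direct and fairly intricate computation with Manin/modular symbols: using the Manin presentation of $H_1(X_1(p);\ZZ)$ and the explicit formula for the action of the Hecke operators, one writes out $T_ne$, extracts its leading coefficients into a $d\times d$ matrix, and bounds the resulting determinant from below, so that it is nonzero mod $\ell$ once $p$ is large. Everything else — the modular interpretation, the finiteness of $J_e(\QQ)$ via Kolyvagin–Logachev, and the formal-immersion-implies-no-points mechanism — was already available from the work of Mazur and Kamienny; turning the conjectural formal immersion at the cusp into a theorem for all large $p$ is where the real work is. (The bound $N(d)$ obtained this way was later sharpened by Oesterlé to $p\le(1+3^{d/2})^2$, and extended to prime powers by Parent.)
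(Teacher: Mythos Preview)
The paper does not give a proof of this theorem at all: it is stated as Merel's result, cited to \cite{Mer}, and then used as a black box to derive Corollary~\ref{cor_mer}. So there is no ``paper's own proof'' to compare against.

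That said, your outline is a faithful sketch of the actual Mazur--Kamienny--Merel strategy: reduce to bounding primes $p$ for which $X_1(p)^{(d)}$ has a non-cuspidal $\QQ$-point, map to the winding quotient $J_e$ (finite Mordell--Weil group by Kolyvagin--Logach\"ev), and use a formal-immersion argument at the cuspidal point to derive a contradiction, with the formal-immersion criterion translated via $q$-expansions into linear independence of $T_1e,\dots,T_de$ modulo a small prime. You correctly identify Merel's new contribution as the modular-symbol computation establishing that independence for $p$ large relative to $d$. One small caveat: in your reduction step, the bound on the $p$-primary exponent for fixed $p$ over degree-$d$ fields is not quite ``Manin-type'' in the classical sense; it follows from the fact that $X_1(p^n)$ has genus $\geq 2$ for $n$ large and hence only finitely many points of degree $\leq d$ by Faltings, or alternatively from Parent's subsequent work. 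This is a minor packaging issue and does not affect the overall architecture, which is correct.
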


\noindent In his very cited but unpublished paper \cite{Oes}, Oesterl\'e showed that Merel's constant $B ( [k:\Q] )$ can
be taken as $( 3^{[k:\Q] /2} + 1 )^2$. 
Thus Theorem \ref{teo_prv}, combined with Theorem \ref{merel}, implies the next statement.

\begin{cor}[Paladino, Ranieri, Viada, 2012-2014] \label{cor_mer}
Let $ \E $ be an elliptic curve defined over a number field $k$.
Then there exists a number ${B} ( d )$, depending only on the degree $d=[k:\QQ]$
of $ k $ over $\QQ$, such that the local-global principle for divisibility of points by $p^l$ in $\E$ over $k$ holds for 
every prime number $p > {B} ( d )$ and every $l\geq 1$. In addition $B( d ) \leq ( 3^{\frac{d} {2}} + 1 )^2$.
\end{cor}

\noindent  Observe that the statement of Corollary \ref{cor_mer} holds for all $k$ and
not only for number fields that do not contain $\QQ(\z_p+\overline{\z_p})$.
In fact the number ${B} ( d )$ can be chosen as ${\rm max} \{ p_0, ( 3^{d /2} + 1 )^2 \}$,
where $p_0$ is the largest prime such that $ k $ contains the field $\Q ( \zeta_{p_0} + \overline{\zeta_{p_0}} )$.
Since  $p_0 \leq 2 [k: \Q] + 1$, then $B( [k:\Q] ) \leq ( 3^{[k:\Q] /2} + 1 )^2$. 

\par\bigskip Gillibert and Ranieri  considered the restriction of Problem \ref{prob1} to torsion points of elliptic curves over number
fields and showed that in this case the answer is affirmative for all powers of every $p\geq 3$ \cite{GG}. Their result is best possible, since for powers of $2$ the local-global principle fails even in this case.

\par\bigskip In the case of global fields of positive characteristic,  the problem was treated by Creutz and Voloch in the mentioned \cite{CV}. 
In particular they showed some counterexamples to Problem \ref{prob1} in elliptic curves and also some counterexamples to
Cassels' question.
An analogue of Problem \ref{prob1} for Drinfeld modules and respectively Carlitz modules was studied in \cite{Hei} and \cite{NNDQ}.
In \cite{Hei}, van der Heiden considered the problem for Drinfeld modules of rank 1 and rank 2 over function fields. The answer is affirmative in many cases, but there are counterexamples too (see in particular \cite[Theorem 18]{Hei}). In \cite{NNDQ},  Dong Quan Ngoc Nguyen studied a generalization of such a problem for Carlitz modules over function fields and 
generalizes van der Heiden's results by giving some sufficient conditions to have an affirmative answer.

\bigskip Regarding Cassels' question, as showed in Proposition \ref{triviality}, the triviality of
$H^1_{\textrm{loc}}(G,\E[q])$ implies the triviality of
$\Sha(k,\E[q])$. For elliptic curves we have  $\E[q]\simeq \E[q]^{\vee}$ and consequently
$\Sha(k,\E[q])\simeq \Sha(k,\E[q]^{\vee})$. Then, in view of Theorem \ref{thm_cre2}, Theorem \ref{teo_prv} (see also Corollary \ref{corPRV}) assures an affirmative answer to Cassels' question over $\QQ$
for all prime numbers $p\geq 5$. We have also an affirmative
answer to Problem 2, for all $r\geq 0$, for every $q=p^l$, with $p\geq 5$ and $l\geq 1$. 
Furthermore, for a general number field $k$,
Theorem  \ref{teo_prv}, combined with Corollary \ref{cor_mer}, imply an affirmative answer to Cassels' question (respectively to Problem 2, for all $r$) in elliptic curves over $k$,
for every $p> ( 3^{[k:\Q] /2} + 1 )^2$ (resp. for all powers $p^l$ of every  $p> ( 3^{[k:\Q] /2} + 1 )^2$). 
A description of how to reach these conclusions by combining the cited theorems 
does not appear in the literature before. 
The conclusions themselves for $k\neq \QQ$
do not appear in the literature too (the case when $k=\QQ$ is mentioned in \cite{Cre2} as a consequence of \cite{PRV3}).
Here we explicitly resume all these conclusions for Cassels' question in the next theorem.

\begin{thm} \label{new}
Let $ \E $ be an elliptic curve defined over a number field $k$.
Then Cassels' question  has an affirmative answer for all $p > ( 3^{[k:\Q] /2} + 1 )^2$ in $\E$ over $k$. 
 In addition, when $k=\QQ$, Cassels' question has an affirmative answer for all $p\geq 5$.
\end{thm}

\noindent In \cite{CS}, the authors proved a similar result.

\begin{thm}[\c{C}iperiani, Stix, 2015] \label{CS_new}
Let $ \E $ be an elliptic curve defined over a number field $k$. Let $B(d)$ be the constant in Theorem
\ref{merel}, where $d=[k:\QQ]$. Then Cassels' question  has an affirmative answer in the following two cases: 

\begin{enumerate}
\item[1)] $p >\max\{B(d),(2^d+2^{\frac{d}{2}})^2\}$;
\item[2)] $p\geq 3 $, $[k(\z_p):k]\neq 2$ and $\E[p]$ is an irreducible $G_k$-representation.  
\end{enumerate}
\end{thm}

\noindent The bound $p>(3^{[k:\Q] /2} + 1 )^2$ in Theorem \ref{new} is better than the bound $p >(2^{[k:\QQ]}+2^{[k:\QQ]/2})^2$ in Theorem \ref{CS_new}. In fact, we have already observed that $B(d)$ can be taken $\leq (3^{\frac{d}{2}} + 1 )^2$. Then $\max\{B(d),(2^d+2^{\frac{d}{2}})^2\}=(2^d+2^{\frac{d}{2}})^2$. 
Both Theorem \ref{teo_prv} and Theorem \ref{CS_new} 
give  a criterion to establish the validity of Cassels' question for $p$ in $\E$ over $k$.
Observe that if $k\neq k(\z_p)$, then the condition $[k(\z_p):k]\neq 2$ implies $\QQ(\z_p+\overline{\z_p})\not\subseteq k$ in the second part of
Theorem \ref{CS_new}. Moreover the irreducibility of $\E[p]$ as a $G_k$-module implies that $\E$ has no
$k$-rational $p$-torsion points, as required in Theorem \ref{teo_prv}. We do not know if the bound  $p>(3^{[k:\Q] /2} + 1 )^2$ is sharp,
when $k\neq \QQ$. When $k= \QQ,$ such a bound is not sharp, since
Cassels' question has an affirmative answer for all $p\geq 5$; so
we may expect that it can be improved for other number fields too.  The bound $p\geq 5$ is sharp. In fact, when 
 $k=\QQ$,  there
are counterexamples to Cassels' question (resp. Problem 2), for $p\in \{2,3\}$ (respectively for powers $p^l$, with $p\in \{2,3\}$ and $l\geq 2$), see Section \ref{counter_cre} for further details.  When $k=\QQ$, in \cite{CS} the authors give another proof of Theorem \ref{new} for $p\geq 11$.

\subsection{Local-global divisibility in algebraic tori} \label{tori}

The study of Problem 1 in algebraic tori began
in \cite{DZ}.  Illengo gave a more complete description in \cite{Ill}, by proving
the following statement.

\begin{thm}[Illengo, 2008] \label{Ill} Let $\Tor$ be an algebraic torus, defined over $k$, of dimension
$$n< 3(p-1).$$
Then the local-global divisibility by $p$ holds in $\Tor$ over $k$. 
\end{thm}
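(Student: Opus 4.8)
The plan is to use the cohomological machinery from Theorem~\ref{h1loc} together with Proposition~\ref{Sylow}. To prove the local-global divisibility by $p$ holds in an algebraic torus $\Tor$ of dimension $n < 3(p-1)$, it suffices by Theorem~\ref{h1loc} to show that $H^1_{\textrm{loc}}(G,\Tor[p]) = 0$, where $G = \Gal(K/k)$ with $K = k(\Tor[p])$; and by Proposition~\ref{Sylow} it is enough to check this after restricting to a $p$-Sylow subgroup $G_p$ of $G$. So I would first reduce to the case where $G$ is a $p$-group. Since $\Tor[p] \simeq (\ZZ/p\ZZ)^n$ as a group, $G$ is identified with a subgroup of $\GL_n(\ZZ/p\ZZ)$, and a $p$-Sylow of $\GL_n(\FF_p)$ is the group of upper-triangular unipotent matrices, which has order $p^{n(n-1)/2}$; thus $|G_p| \le p^{n(n-1)/2}$.

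The core of the argument is a counting/representation-theoretic bound. For a $p$-group $G_p$ acting on $V := \Tor[p] = \FF_p^n$, one wants to show that every cocycle $\{Z_\sigma\}$ satisfying the local conditions (for each $\sigma \in G_p$ there is $A_\sigma \in V$ with $Z_\sigma = (\sigma - 1)A_\sigma$) is a coboundary. The standard approach, which I expect Illengo follows, is to filter $V$ by $G_p$-submodules whose successive quotients are trivial $G_p$-modules (possible since $G_p$ is a $p$-group acting on an $\FF_p$-vector space, so the fixed subspace is nonzero and one induces). One then analyzes $H^1_{\textrm{loc}}$ along this filtration using the long exact sequence, reducing to understanding, for each step, which classes in $H^1(G_p, \FF_p)$ (i.e. homomorphisms $G_p \to \FF_p$) and which extension classes can satisfy the local conditions. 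The key numerical input is that the local condition is extremely restrictive: a locally trivial class evaluated on a cyclic subgroup $\langle \sigma \rangle$ must vanish, and by Chebotarev all cyclic subgroups occur. Combining this with the bound on $\dim V$ relative to $p$ — the hypothesis $n < 3(p-1)$ — forces the relevant cohomology to vanish. Roughly, each ``obstruction'' costs a dimension drop of order $p-1$ (reflecting the structure of $\FF_p[\ZZ/p]$-modules, where the relevant Jordan block analysis brings in the factor $p-1$), and having dimension below $3(p-1)$ leaves no room for a nontrivial locally-trivial-but-not-globally-trivial class.

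Concretely, the steps I would carry out are: (1) reduce to $G_p$ a $p$-group via Proposition~\ref{Sylow}; (2) if $G_p$ is cyclic, observe directly that the local conditions force triviality (a cocycle trivial on $\langle \sigma \rangle = G_p$ is a coboundary), handling the base case; (3) for general $p$-groups, pick a central subgroup $C \cong \ZZ/p\ZZ$ and use inflation-restriction for $1 \to C \to G_p \to G_p/C \to 1$ to set up an induction on $|G_p|$, tracking how $H^1_{\textrm{loc}}$ sits inside the exact sequence and how the local conditions descend to $G_p/C$ and restrict to $C$; (4) control the $C$-module structure of $V$: writing $V$ as a sum of indecomposable $\FF_p[C]$-modules (Jordan blocks of sizes $1 \le d \le p$), bound the contribution of each block to potential locally-trivial classes, and use $n < 3(p-1)$ to conclude the total is zero.

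The main obstacle will be step (4): making the dimension bookkeeping precise enough to see exactly why $3(p-1)$ is the right threshold rather than $2(p-1)$ or $4(p-1)$. This requires a careful analysis of $H^1$ of unipotent groups with coefficients in truncated modules and an understanding of how a class can be simultaneously locally trivial on all cyclic subgroups yet globally nontrivial — this is precisely where the counterexample constructions (which need dimension $\ge 3(p-1)$, realized for instance by the module $\FF_p[\ZZ/p\ZZ] \oplus \FF_p[\ZZ/p\ZZ] \oplus \FF_p[\ZZ/p\ZZ]$-type configurations) would show the bound is close to sharp. I would also need to verify that restriction to the $p$-Sylow genuinely loses no information about the local conditions, i.e. that the equivalent Definition~\ref{loc_cond} of $H^1_{\textrm{loc}}$ restricts correctly, which follows from Proposition~\ref{Sylow} but should be stated carefully.
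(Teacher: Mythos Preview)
The paper is a survey and does not prove Illengo's theorem; it merely states it and cites \cite{Ill}. So there is no proof here to compare against, and I comment on your proposal directly.

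Your approach has a genuine gap: you treat $G_p$ as an arbitrary $p$-subgroup of $\GL_n(\FF_p)$, but the essential feature of algebraic tori is that the Galois action on $\Tor[p]$ comes from an \emph{integral} representation. One has $\Tor[p] \cong X_*(\Tor) \otimes_{\ZZ} \mu_p$, with $G_k$ acting on the cocharacter lattice $X_*(\Tor) \cong \ZZ^n$ through a map to $\GL_n(\ZZ)$; after restricting to the $p$-Sylow the cyclotomic twist on $\mu_p$ becomes trivial (its image has order dividing $p-1$), so $G_p$ is the mod-$p$ reduction of a finite $p$-subgroup of $\GL_n(\ZZ)$, not merely of $\GL_n(\FF_p)$. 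This is exactly what produces the factor $p-1$: the minimal faithful rational representation of $\ZZ/p\ZZ$ already has dimension $p-1$, so for $n<3(p-1)$ the possible $p$-subgroups of $\GL_n(\ZZ)$ form a very short list, and Illengo's argument analyzes $H^1_{\loc}$ case by case for that list using the integral module structure. Your Jordan-block analysis over $\FF_p[\ZZ/p\ZZ]$ cannot recover this, since indecomposable $\FF_p[\ZZ/p\ZZ]$-modules have dimensions $1,\ldots,p$, not $p-1$.

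Worse, the statement your argument would need is false at that level of generality. The paper itself notes in Section~\ref{sec_alg_gr} that there exist $p$-groups $\Gamma$ in $\GL_3(\FF_p)$ and $\GL_4(\FF_p)$ with $H^1_{\loc}(\Gamma,\FF_p^n)\neq 0$; since $3<3(p-1)$ for every odd prime $p$, your strategy---which uses nothing about $\Tor$ beyond the isomorphism $\Tor[p]\simeq\FF_p^n$---would have to show $H^1_{\loc}=0$ for these $\Gamma$ as well, a contradiction. The integral constraint is not a technicality but the whole point; this is also why Illengo's companion counterexample, stated immediately afterwards in the paper, is phrased for $p$-groups in $\SL_n(\ZZ)$ rather than $\SL_n(\FF_p)$.
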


\noindent Illengo also showed that his bound is best possible, since
for all $n\geq 3(p-1)$ there are counterexamples (see Section \ref{tori2} below).
 For powers of $p$ the question is open. Cassels' question is also open in algebraic tori,
as well as Problem \ref{prob2} for $r\geq 1$.

\subsection{Local-global divisibility in abelian varieties} \label{sec_ab_var}

\bigskip When $\G$ is an abelian variety $\A$, we have some more information about Problem 1, provided by Gillibert and Ranieri in \cite{GR2}.

\begin{thm}[Gillibert, Ranieri, 2017] \label{teo_gab_flo}
Let $\A$ be an abelian variety defined over a number field $k$ and let $p$ be a prime.  Suppose that
there exists an element $\sigma\in \Gal(k(\A[p])/k)$, with order dividing $p-1$ and not fixing
any nontrivial element of $\A[p]$. Moreover suppose that $H^1(\Gal(k(\A[p])/k), \A[p])=0$.
Then the local-global divisibility by $p^l$ holds in $\A$ over $k$ and $\Sha(k,\A[p^l])=0$, for every $l\geq 1$.   
\end{thm}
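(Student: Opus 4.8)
The plan is to verify the hypothesis of Theorem \ref{h1loc} in every relevant layer: writing $K_n:=k(\A[p^n])$ and $G_n:=\Gal(K_n/k)$, I would show that $H^1_{\textrm{loc}}(G_n,\A[p^n])=0$ for all $n\geq 1$. Since, as recalled in Section \ref{h1loc_sha}, $\Sha(k,\A[p^n])$ is (isomorphic to) a subgroup of $H^1_{\textrm{loc}}(G_n,\A[p^n])$, this single statement yields both assertions of the theorem at once. The two hypotheses are expected to play complementary roles: $H^1(G_1,\A[p])=0$ will control the ``$G_1$-part'' of the cohomology, while the element $\sigma$ of order dividing $p-1$ with $\A[p]^\sigma=0$, once lifted into $G_n$, will kill the error terms and connecting maps arising in the d\'evissage.

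First I would produce a good lift of $\sigma$. Let $H_n:=\Gal(K_n/K_1)$ be the kernel of $G_n\to G_1$; it is a $p$-group, since it embeds into the kernel of $\GL_{2g}(\ZZ/p^n\ZZ)\to \GL_{2g}(\ZZ/p\ZZ)$. By Schur--Zassenhaus the preimage of $\la\sigma\ra$ in $G_n$ splits, so there is $\tilde\sigma\in G_n$ of order dividing $p-1$ mapping to $\sigma$. If $\tilde\sigma$ fixed a nonzero $T\in\A[p^n]$, it would fix the nonzero element $p^jT\in\A[p]$ for a suitable $j$; but $\tilde\sigma$ acts on $\A[p]$ through $\sigma$, contradicting $\A[p]^\sigma=0$. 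Hence $\tilde\sigma-1$ is injective, so an automorphism of the finite group $\A[p^n]$, and $H^i(\la\tilde\sigma\ra,\A[p^n])=0$ for all $i$ (its order is prime to $p$ and $\A[p^n]$ is a $p$-group). Given a class in $H^1_{\textrm{loc}}(G_n,\A[p^n])$, represented by a cocycle $\{Z_\tau\}$ satisfying the local conditions of Definition \ref{loc_cond}, I would normalize it by the coboundary of $(\tilde\sigma-1)^{-1}Z_{\tilde\sigma}$; this is legitimate because $\tilde\sigma-1$ is invertible, and it achieves $Z_{\tilde\sigma}=0$, hence $Z_{\tilde\sigma^i}=0$ for all $i$ by the cocycle relation. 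A short computation then gives $Z_{\tilde\sigma\tau\tilde\sigma^{-1}}=\tilde\sigma(Z_\tau)$ for all $\tau\in G_n$; in particular $Z|_{H_n}$ is a $\la\tilde\sigma\ra$-equivariant cocycle on $H_n$. By the Hochschild--Serre sequence for $1\to H_n\to G_n\to G_1\to 1$ it then suffices to prove: (a) $Z|_{H_n}$ is a coboundary on $H_n$ (so the class is inflated from $H^1(G_1,\A[p^n]^{H_n})$), and (b) $H^1(G_1,\A[p^n]^{H_n})=0$.

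Part (b) I would obtain by d\'evissage. The $G_1$-module $\A[p^n]^{H_n}$ carries the filtration by the submodules $\A[p^n]^{H_n}\cap\A[p^j]$, whose graded pieces embed $G_1$-equivariantly into $\A[p]$ via multiplication by $p^{j-1}$. For any $G_1$-submodule $N\subseteq\A[p]$, the sequence $0\to N\to\A[p]\to\A[p]/N\to0$ gives $H^0(G_1,\A[p]/N)\to H^1(G_1,N)\to H^1(G_1,\A[p])=0$; since $\sigma$ acts semisimply on $\A[p]$ without fixed vectors, the quotient $\A[p]/N$ has no $\sigma$-fixed vectors either, so $H^0(G_1,\A[p]/N)=0$ and therefore $H^1(G_1,N)=0$. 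Inserting this into the long exact sequences of the filtration gives $H^1(G_1,\A[p^n]^{H_n})=0$. So once (a) is established, $[Z]=0$, completing the induction on $n$ (the base case $n=1$ being $H^1_{\textrm{loc}}(G_1,\A[p])\subseteq H^1(G_1,\A[p])=0$).

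Part (a) is the crux, and the main obstacle. What must be shown is a purely group-theoretic statement: if $J$ is a finite group with normal Sylow $p$-subgroup and complementary cyclic $p'$-subgroup $\la\tilde\sigma\ra$ acting on a $p$-group module $M$ with $M^{\tilde\sigma}=0$, then $H^1_{\textrm{loc}}(J,M)=0$; I would apply this with $J=H_n\rtimes\la\tilde\sigma\ra$ and $M=\A[p^n]$, noting that $Z|_J$ is still locally trivial because it is restricted from a locally trivial class over $G_n$, and that a coboundary $Z_\tau=(\tau-1)W$ over $J$ forces $W\in\A[p^n]^{\tilde\sigma}=0$, hence $Z|_{H_n}=0$. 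I would prove this lemma by induction on the exponent of $M$, via $0\to\A[p]\to\A[p^n]\to\A[p^{n-1}]\to0$ (inclusion and multiplication by $p$): the image in $H^1_{\textrm{loc}}(J,\A[p^{n-1}])$ vanishes by the inductive step, so a locally trivial class comes from $H^1(J,\A[p])$, where — $H_n$ acting trivially on $\A[p]$ — it is a $\la\tilde\sigma\ra$-equivariant homomorphism $H_n\to\A[p]$ vanishing on every cyclic subgroup of $H_n$, hence zero. The genuinely delicate point, and where both the $\la\tilde\sigma\ra$-equivariance and $M^{\tilde\sigma}=0$ are indispensable, is the patching: one has local coboundaries $w_C$ with $Z_c=(c-1)w_C$ for each cyclic $C\leq H_n$, well-defined only up to $\ker(c-1)$, and the absence of $\tilde\sigma$-fixed points is exactly what forces these ambiguities, which live in fixed subspaces of the various conjugates, to cancel upon averaging over $\la\tilde\sigma\ra$, producing a single global $W$. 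Making this gluing argument precise — together with tracking the local conditions through the tower $k\subseteq k(\A[p])\subseteq\cdots\subseteq k(\A[p^n])$ — is the technical heart of the proof.
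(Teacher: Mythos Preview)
This paper is a survey: the Gillibert--Ranieri theorem is merely \emph{stated} in Subsection~\ref{sec_ab_var} with a reference to \cite{GR2}, and no proof is given here. So there is no ``paper's own proof'' against which to compare your attempt; any comparison would have to be with the original article \cite{GR2}.

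That said, your overall architecture---lift $\sigma$ to $\tilde\sigma\in G_n$ by Schur--Zassenhaus, use the invertibility of $\tilde\sigma-1$ on $\A[p^n]$ both to normalise cocycles and to kill fixed-point obstructions, and combine inflation--restriction for $H_n\trianglelefteq G_n$ with a d\'evissage along $\A[p]\subset\A[p^2]\subset\cdots$---is the natural line of attack and is, in outline, what \cite{GR2} does. Your part (b) is clean and correct. One small streamlining you may not have noticed: once you normalise so that $Z_{\tilde\sigma}=0$ and know by induction that the image of $[Z]$ in $H^1_{\loc}(J,\A[p^{n-1}])$ vanishes, the coboundary $V$ witnessing that vanishing must satisfy $(\tilde\sigma-1)V=pZ_{\tilde\sigma}=0$, hence $V=0$; so $pZ\equiv 0$ as a \emph{function} and $Z$ already takes values in $\A[p]$, with no need to choose a lift.

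Where your sketch is honestly incomplete is exactly where you flag it. After the reduction you have a $\tilde\sigma$-equivariant homomorphism $Z|_{H_n}\colon H_n\to\A[p]$, and you assert that local triviality forces $Z_h=0$. But the local witnesses satisfy $Z_h=(h-1)A_h$ with $A_h\in\A[p^n]$, not in $\A[p]$, and $h\in H_n$ need not act trivially on $\A[p^n]$; so this does not immediately give $Z_h=0$. The fix, as you intuit, is to bring $\tilde\sigma$ back in---applying the local condition to elements of the form $\tilde\sigma^i h$ rather than to $h$ alone and exploiting that the nontrivial $\tilde\sigma$-eigenspaces exhaust $\A[p^n]$---but turning the ``averaging over $\la\tilde\sigma\ra$'' heuristic into an actual equality takes real work, and in \cite{GR2} this step occupies the core of the argument. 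Your proposal correctly locates the difficulty but does not yet resolve it.
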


\noindent The same authors studied the local-global divisibility especially in the case of abelian varieties of $\GL_2$-type \cite{GG},
\cite{GR3}.  

\par\bigskip Along with Theorem \ref{thm_cre2}, one of the main results about Cassels' question in abelian varieties is the mentioned Theorem \ref{CS}, proved by \c{C}iperiani and Stix (see also \cite{CS1}). 
\par Observe that both the conclusion of Theorem \ref{CS} and the conclusion of Theorem
\ref{teo_gab_flo} hold under the assumption that  $H^1(\Gal(k(\A[p])/k), \A[p])$ is trivial. 
As discussed above, the vanishing of this group implies 
the validity of the local-global divisibility by $p$ in $\A$ over $k$ and in $H^1(k,\A)$. 
Therefore the case of the divisibility by $p$ is not covered either by  Theorem \ref{CS}
or by Theorem \ref{teo_gab_flo}. In \cite[Theorem 1.3]{GR2} Gillibert and Ranieri
gave sufficient conditions to have an affirmative answer to Problem 1 in principally polarized
abelian varieties, without assuming $H^1(\Gal(k(\A[p])/k), \A[p])=0$.
Anyway the question for the divisibility by $p$ remained open in abelian varieties
that are not principally polarized, as well as in a general commutative algebraic group.

\subsection{Some remarks about the local-global divisibility of points in other commutative algebraic groups} \label{sec_alg_gr}

\bigskip\noindent We have seen that the local-global divisibility by $p$ holds when $\G$ is a torus isomorphic to $\GG_m$
and when $\G$ is an elliptic curve. This is not true in general for a commutative algebraic group $\G$, 
as shown by some counterexamples that we will describe in next section and as underlined in \cite[Remark 3.6]{DZ}.  In particular they show that for abelian varieties of dimension higher than 1 and for algebraic tori of dimension higher than 1, it is not true in general that the local-global 
divisibility by a prime $p$ holds. Furthermore, we have just observed at the end of the previous section, that  Theorem \ref{CS}  and Theorem
\ref{teo_gab_flo} do not give information about the local-global divisibility by $p$ in
abelian varieties that are not principally polarized. 
A result in \cite{Pal17}, gives conditions on $\G[p]$ ensuring the
validity of the local-global divisibility by $p$, for a general commutative algebraic group $\G$. It underlines that the reducibility of
$\G[p]$ as a $G_k$-module or as an $H$-module, for any subnormal subgroup $H$ of $G_k$
is the greatest obstruction to the local-global divisibility by $p$. In particular
every class of Galois groups $\Gal(k(\G[p])/k)$ for which the local-global divisibility by $p$ may fail in $\G$
is shown in that paper.

\section{Counterexamples} \label{counter1}

\noindent The triviality of $H^1_{\textrm{loc}}(G,\G[q])$ is not exactly
a necessary condition for the local-global divisibility by $q$ in $\G$ over $k$.
In fact, the existence of a cocycle of $G$ with values in $\G[q]$ that satisfies
the local conditions and it is not a coboundary ensures the
existence of a counterexample over a \emph{finite extension} of $k$. Here is
the precise statement, proved in \cite{DZ3}.

\begin{thm}[Dvornicich, Zannier, 2007] \label{counter}
Let $q$ be a positive integer and let $K=k(\G[q])$ be the $q$-division field of a
connected commutative algebraic group $\G$ defined over a number field $k$. 
Let $\{Z_{\sigma}\}_{\sigma\in G}$ be a cocycle with
values in $\G[q]$ representing a nontrivial element in
$H^1_{loc}(G,\G[q])$. Then there exist a number field
$L$ such that $L\cap K=k$ and a point $P\in\G(L)$ which
is divisible by $q$ in ${\G}(L_v)$ for all
places $v$ of $L$, but not divisible by $q$ in $\G(L)$.
\end{thm}

\noindent Therefore, the nontriviality of $H^1_{\textrm{loc}}(G,\G[q])$
is an obstruction to the validity of the principle in finite extensions of $k$. \normalcolor

\noindent A  
method to obtain explicit counterexamples from a nontrivial class in $\{Z_{\sigma}\}_{\sigma\in G}\in H^1_{\textrm{loc}}(G,\G[q])$ 
is given by considering the equality \eqref{eq1} with  $D\in \G(\bar{k})$ (and  $\{Z_{\sigma}\}_{\sigma\in G}$ such a nontrivial element in
$H^1_{\loc}(G,\G[q])$).
 When we know explicit equations for the group law of $\G$, as for instance in the case of elliptic curves, 
 we get an explicit system of equations in the 
coordinates of $D$, as variables. When $\G$ is an elliptic curve, we have
a system of two equations in two variables. In the proof of Theorem \ref{counter} in \cite{DZ3}, the authors show
that, as $\sigma$ varies in $G$, that system defines an algebraic variety
$\mathcal{B}$ that is isomorphic to $\G$ over $K$. Furthermore, they show 
that every $k$-rational point of $\mathcal{B}$ corresponds to a point $D\in \G(K)$, such 
that $P=qD$ is a $k$-rational point of  $\G$ violating the Hasse principle for divisibility by $q$. 
This construction clarifies why in certain cases the non-vanishing of 
$H^1_{\textrm{loc}}(G,\G[q])$ is not a necessary condition; it depends
on the existence of a $k$-rational point on the variety $\mathcal{B}$. In the
case when $\mathcal{B}$ has no $k$-rational points, we are not able to
find a counterexample over $k$. However an $L$-rational point of $\mathcal{B}$, where  
 $L$ is
a finite extension of $k$ linearly disjoint from $K$ over $k$,
corresponds to a point $D\in \G(LK)$ such 
that $P=qD$ is an $L$-rational point of  $\G$ violating the Hasse principle for divisibility by $q$. Theorem \ref{counter}
ensures the existence of an $L$-rational point in $\mathcal{B}$ and consequently assures the existence
of a counterexample to Problem \ref{prob1} in a finite extension of $k$ linearly disjoint from $K$
(in some cases we also have that $L$ is $k$ itself, as stated above).
\par
Once we have a counterexample for $p^l$, a method to find
counterexamples to the local-global divisibility by $p^{l+s}$, for every $s\geq 0$,
is shown by the second author in \cite{Pal3}. It is based on producing maps between $H^1_{\loc}(G, \G[p^l])$
and $H^1_{\loc}(\Gal(k(\G[p^{l+s}]/k), \G[p^{l+s}])$ 
that are injective under certain conditions. 
This method  has been applied  to produce
explicit counterexamples for $2^l$ and $3^l$, with $l\geq 2$, respectively over $\QQ$ and over $\QQ(\z_3)$ (see Section 6.2 for further details). It works both to prove the existence of counterexamples
and to find explicitly some of them.

\begin{rem} \label{rem_qq} The most interesting case for counterexample is when
$k=\QQ$, since a counterexample over $\QQ$ gives
also a counterexample over all but finitely many
number fields $k$. In fact, assume that 
$P$ is a point giving a counterexample to the local-global
divisibility by $q$ in $\G$ over $\QQ$ and let $D$ be a
$q$-divisor of $P$, i.e. $P=qD$. Let   $\QQ(\G[q])(D)$ be the extension of $\QQ$ obtained by adding to
$\QQ(\G[q])$ the coordinates of $D$. 
As stated in Section \ref{sec3}, since two different $q$-divisors of the same point
differ by a $q$-torsion point in $\G$, then $\QQ(\G[q])(D)/\QQ$
is a Galois extension. If $k$ is a number field  linearly
disjoint from  $\QQ(\G[q])(D)/\QQ$ over $\QQ$, then $P$ is locally divisible by $q$
in all but finitely many completions $k_v$, with $v\in M_k$
(because it is locally divisible by $q$ in all but
 finitely many $p$-adic fields $\QQ_p$), but it is not
divisible by $q$ in $k$ (since the coordinates
of the $q$-divisors of $P$ lie in  $\QQ(\G[q])(D)/\QQ$).
\end{rem}

\bigskip Concerning the link between counterexamples to Problem 1 and counterexamples to Problem 2 and Cassels' question,
observe that, by Theorem \ref{thm_cre2}, if we have  counterexamples to Cassels' question in an abelian variety $\A$,
 then the image
of the map $\Sha(k,\A[q]^{\vee}) \rightarrow \Sha(k,\A^{\vee})$ is not contained in the maximal divisible subgroup 
$ \divis (H^1(k,\A^{\vee}))$ of $\Sha(k,\A^{\vee})$. In particular $\Sha(k,\A[q]^{\vee})$ is nontrivial, implying
 $H^1_{\loc}(G,\A[q]^{\vee})\neq 0$ too.
Therefore a counterexample to Cassels' question in $\A$ gives a counterexample to
the local-global divisibility in $\A^{\vee}$, but the converse  is not true.  
In fact, the Tate-Shafarevich group
$\Sha(k,\A^{\vee})$ could vanish even if the first cohomology group 
 $H^1_{\loc}(G,\A[q]^{\vee})$ does not vanish and, in any case, 
even the nontriviality of $\Sha(k,\A[q]^{\vee})$ 
does not imply that the image
of the map $\Sha(k,\A[q]^{\vee}) \rightarrow \Sha(k,\A^{\vee})$ is not contained
in $ \divis (H^1(k,\A^{\vee}))$.  If $\A$ is principally polarized (that in particular happens
when $\A$ is an elliptic curve), we also have that a counterexample to Cassels' question in $\A$ gives a counterexample to
the local-global divisibility in $\A$, but the converse it is not true in general.

\subsection{Counterexamples about Problem \ref{prob1} and Cassels' question in elliptic curves} \label{subsec1}
The first paper dedicated exclusively to the counterexamples to the
local-global divisibility of points of elliptic curves 
is \cite{DZ2}. The authors produced 
explicit counterexamples to the local-global divisibility by $4$
in some elliptic curves over $\QQ$. 
They use  equation \eqref{eq1} and the method explained above. One of the counterexamples is
given by the curve $y^2=(x+15)(x-5)(x-10)$, with its rational point
$P=(1561/12^2,19459/12^3)$, that is locally divisible by $4$ in $\QQ_p$,
for all $p\neq 2$, but it is not divisible by $4$ in $\QQ$ and in $\QQ_2$.
This is one of the cases when $H^1_{\loc}(G,\E[4])$ and $\Sha(k,\E[4])$ are
different, since the point $P$ comes out from a notrivial class $\{Z_{\sigma}\}_{\sigma\in G}$ in $ H^1_{\loc}(G,\E[4])$,
which does not belong to $\Sha(k,\E[4])$, being $P$ not divisible by $4$ in $\QQ_2$. 
In \cite{DZ2} the authors also show that the point 
$P=(5086347841/1848^2,-35496193060511/1848^3)$ of the curve
$y^2=(x+2795)(x-1365)(x-1430)$ is locally divisible by 4
in all $\QQ_p$, but it is not globally divisible by $4$ in $\QQ$. Similar counterexamples appear in \cite{Pal} and in \cite{Cre2}.
As mentioned above, in \cite{Pal3} it was shown that the first cited counterexample 
to the divisibility by $4$, given by $P=(1561/12^2,19459/12^3)$ in $y^2=(x+15)(x-5)(x-10)$,
 can be raised to counterexamples 
to the local-global divisibility by $2^l$, for all $l\geq 2$.
In particular, the point $2^{l-2}P$ gives a counterexample
to the local-global divisibility by $2^l$.\par  
Even when the question about the local-global divisibility is restricted 
only to the torsion points of an elliptic curve, for
$p=2$ there are still counterexamples, as shown by Gillibert and Ranieri in \cite{GG}.

The first counterexamples to the local-global divisibility by $3^l$, for some $l\geq 2$, were produced
in \cite{Pal2}. They are counterexamples to the local-global divisibility by $3^2$,
but the points giving the counterexamples have rational abscissas only, whereas
the ordinates are not rational and are defined over $\QQ(\z_3)$.
Those counterexamples to the local-global
divisibility by $3^2$ imply counterexamples to the local-global
divisibility by  $3^l$, for all $l\geq 2$, in elliptic curves
over $\QQ(\z_3)$ \cite{Pal3}. In 2016 Creutz produced the first
 counterexamples to the local-global
divisibility by $3^l$, for all $l\geq 2$, in elliptic curves
over $\QQ$ \cite{Cre2}. Those examples are
given by the elliptic curve $\E: x^3+y^3+30z^3=0$ defined over $\QQ$
(with distinguished point $P_0=(1:-1:0)$) and the rational point 
$P=(1523698559:-2736572309:826803945)$. For every $l\geq 2$,
the point $3^{l-1}P$ is locally divisible by $3^l$ in all $p$-adic fields $\QQ_p$
but it is not divisible by $3^l$ in $\QQ$. The techniques used to find those counterexamples
are different from the one illustrated above. 
Creutz considered a $3$-covering $C$ of $\E$ over $\QQ$. The 
$3$-coverings of $\E$ over $\QQ$ are parametrized up to isomorphism by $H^1(\QQ,\E[3])$  \cite[Proposition 1.14]{CFO}.
 He took into account the exact sequence 

$$...\rightarrow \E(\QQ)   \xrightarrow{\makebox[0.5cm]{{\small $\delta$}}} H^1(\QQ,\E[3])\rightarrow H^1(\QQ,\E) \rightarrow...$$

\noindent and the class $\xi \in H^1(\QQ,\E[3])$
associated to $C$. The images $res_v(\xi)$ are in $\delta(\E(\QQ_p)[3])$,
for every $p$. On the other hand he took a rational point on $C$ and calculated its
image in $\E$, which is $P$. Therefore $\xi=\delta(P)$ and by showing
$\xi\neq 0$ he got the conclusion. 
 \par Another counterexample to the local-global divisibility by $9$ in elliptic curves over $\QQ$ appears
in \cite{LW}. Lawson and Wuthrich show that the point $(-2,3)$ on the elliptic curve 
$y^2+y=x^3+20$ is locally divisible by $9$ in $\QQ_p$, for all $p\neq 3$,
but it is not divisible by $9$ in $\QQ$ and in $\QQ_3$. To find this counterexample, they
considered elliptic curves admitting a 3-isogeny where either the kernel has a rational 3-torsion point or 
the kernel of the dual isogeny has a rational 3-torsion point. In fact in \cite{DZ3} it is essentially shown that
the non-existence of an isogeny of degree $p$ for $\E$ is a sufficient condition to the local-global
divisibility of points by $p^l$, for every $l\geq 1$ over fields $k$ not containing
$\QQ(\z_p+\overline{\z_p})$ (indeed the proof is carried on in the case when $k=\QQ$, but it can be easily
generalized to every number field $k\not\supseteq\QQ(\z_p+\overline{\z_p})$; see also \cite{PRV2} where this last condition has been explicitated). For each of those $\E$, Lawson and Wuthrich computed
the pair $(a_p(\E), p)$ modulo 9, for all $p< 1000$, where $a_p(\E)$ is the trace of the Frobenius element of $\Gal(k(\E[p])/k)$. 
On the other hand, they considered the pairs $(\textrm{tr}(\sigma), \det(\sigma))$, formed by
trace and determinant of matrices $\sigma\in \GL_2(\ZZ/9\ZZ)$, such that the kernel 
of the map $H^1(G,\E[9])\rightarrow \prod_{v\in \Sigma} H^1(G_v, \E[9])$
is nontrivial. In the cases when
a pair of the first type coincided with a pair of the second type, they got a
curve $\E$, which was a good candidate for  a counterexample.
Then they checked that for
the candidate with the smallest conductor, the local divisibility by 9 
holds for all but finitely many $v$, but the global divisibility does not hold.

 \par\bigskip There are no explicit counterexamples to the local-global
divisibility by powers of $p\geq 5$ in elliptic curves over any number field $k$. Anyway, 
in \cite{Ran} Ranieri exhibited all possible subgroups $G$ of $\GL_2(\ZZ/p\ZZ)$,
such that there could exist an elliptic curve $\E$ with Galois group $\Gal(k(\E[p])/k)$ isomorphic to
$G$, with a point violating the local-global divisibility by $5^l$, for some
positive integer $l$.

\subsubsection{Counterexamples to Problem \ref{prob2} and Cassels' question in elliptic curves} \label{counter_cre}

The first counterexample to 
Problem \ref{prob2} appears in \cite{Cre} and it is given by the curve 

$$y^2=x(x+80)(x+205),$$

\noindent such that $\Sha(\QQ,\E)\not\subseteq 4H^1(\QQ,\E)$. 

 To produce this example Creutz used Theorem \ref{thm_cre2}.
In \cite{Cre2}, he showed that this example implies counterexamples to 
Problem \ref{prob2} in elliptic curves, for  every power $2^n$, with $n\geq 2$.
In the same paper he also shows  counterexamples to 
Problem \ref{prob2} in elliptic curves for  every power $3^n$, with $n\geq 2$.
Those counterexamples also imply a negative answer to Cassels' question for $p\in \{2,3\}$.

\subsection{Counterexamples to Problem 1 in algebraic tori} \label{tori2}

As mentioned in Section \ref{tori}, Illengo showed that the bound in Theorem
\ref{Ill} is best possible, since for all $n\geq 3(p-1)$ there are counterexamples.

\begin{thm}[Illengo, 2008]
Let $p\neq 2$ be a prime and let $l\geq   3(p-1)$. Let $\FF_p^l$ be the field with $p^l$ elements.
There exists a $p$-group
$G$ in $\SL_n(\ZZ)$ such that the map $H^1(G,\FF_p^l)\rightarrow \prod H^1(C,\FF_p^l)$,
where the product is taken on all cyclic subgroups $C$ of $G$, is not injective.
\end{thm}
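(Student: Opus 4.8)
The statement is equivalent to the nonvanishing of the group $H^1_{\textrm{loc}}(G,\FF_p^n)$ of Definition \ref{loc_cond}: the kernel of the map $H^1(G,\FF_p^n)\to\prod_C H^1(C,\FF_p^n)$ is exactly the set of classes of cocycles $z$ whose restriction to every cyclic subgroup $C=\langle g\rangle$ is a coboundary, that is, with $z(g)\in(\rho(g)-1)\FF_p^n$ for every $g\in G$, where $\rho$ is the representation of $G$ on $\FF_p^n$ obtained by reducing the integral representation modulo $p$. So the plan is to produce, for every prime $p\neq2$ and every $n\geq 3(p-1)$, a $p$-group $G$, an integral representation $\rho\colon G\hookrightarrow\SL_n(\ZZ)$, and a $1$-cocycle $z$ of $G$ with values in $M:=\FF_p^n$ that satisfies the local conditions but is not a coboundary. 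Two remarks make the bookkeeping lighter: since $p$ is odd, the determinant of any representation of the $p$-group $G$ on $\ZZ^n$ is a homomorphism from a group of odd order to $\{\pm1\}$, hence trivial, so the image lies in $\SL_n(\ZZ)$ automatically; and $G$ must be chosen noncyclic, for otherwise $C=G$ occurs among the $C$'s and the map is injective.

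First I would reduce to the critical dimension $n=3(p-1)$. Given a triple $(G,\rho_0,z_0)$ that works for $n_0=3(p-1)$, for $n>n_0$ take $\rho=\rho_0\oplus\Id_{n-n_0}$, with trivial action on the $n-n_0$ new coordinates, and $z=(z_0,0)$. Then $\rho$ still maps into $\SL_n(\ZZ)$, the cocycle $z$ still satisfies the local conditions ($(\Id-1)$ kills the new coordinates, so nothing is added there), and $z$ is still not a coboundary, because a global trivializer of $z$ would trivialize $z_0$. Hence it suffices to build the triple in dimension exactly $3(p-1)$.

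For the core construction I would take $G$ to be a small noncyclic $p$-group — an elementary abelian group of rank $2$, or a nonabelian group of order $p^3$ — and $M$ a carefully chosen $\FF_p[G]$-module of dimension exactly $3(p-1)$, assembled from copies of the indecomposable $\FF_p[\ZZ/p]$-module $J_{p-1}$ of dimension $p-1$ (for each of which $H^1(\ZZ/p,-)$ is one-dimensional). The module should be arranged so that three distinguished cyclic subgroups $C_1,C_2,C_3$ of $G$ each act nontrivially on one $(p-1)$-dimensional slab, while the remaining elements of $G$ couple the three slabs. The cocycle $z$ is then prescribed on a generating set of $G$ by values of the form $(\rho(g)-1)(\,\cdot\,)$ and extended by the cocycle relation, so that its restriction to each $C_i$ is visibly a coboundary. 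The first nonroutine point is to check the local condition $z(g)\in(\rho(g)-1)M$ for \emph{all} $g\in G$, not just the generators: a noncyclic $p$-group of order $p^r$ has $(p^r-1)/(p-1)$ subgroups of order $p$ (plus larger cyclic ones when the exponent exceeds $p$), so this is a finite but genuine case analysis, handled in each case by decomposing $M|_{\langle g\rangle}$ into Jordan blocks and checking membership in the image of $\rho(g)-1$.

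The hard part will be the last step: showing that $z$ is \emph{not} a global coboundary. Here I would assume $z(g)=(\rho(g)-1)m$ for all $g$, restrict to a generating set of $G$ to get a linear system for $m$, and prove it inconsistent — equivalently, that the class of $z$ spans a nonzero line in $H^1(G,M)$ that no single local trivializer can reach. The subtlety is that $3(p-1)$ is exactly the threshold: the positive half of Illengo's theorem rules out any smaller $n$, so the module must be just large enough for the local trivializers at $C_1,C_2,C_3$ and at all other cyclic subgroups to genuinely fail to glue, and no larger. Finding the module that threads this needle, and organizing the two verifications around it, is the whole content of the proof. Finally, by Theorem \ref{counter}, such a $G$ yields an actual $k$-torus over a suitable number field violating local-global divisibility by $p$, which is the arithmetic content being claimed.
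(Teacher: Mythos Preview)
The paper under review is a survey: it states Illengo's theorem and cites \cite{Ill}, but does not supply a proof. There is therefore no in-paper argument to compare your proposal against.

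As for your proposal on its own terms: the preliminary reductions are correct --- the padding by a trivial summand to pass from $n=3(p-1)$ to larger $n$ is fine, and the observation that any integral representation of an odd $p$-group lands in $\SL_n(\ZZ)$ automatically is right. Your reformulation via $H^1_{\textrm{loc}}$ is also the correct framing. However, what you have written is, as you yourself say, a plan rather than a proof. The construction of the module $M$ and the cocycle $z$ is left entirely unspecified, and your ``three slabs for three distinguished cyclic subgroups'' heuristic does not by itself address the local condition at the remaining cyclic subgroups: an elementary abelian group of rank $2$ has $p+1$ subgroups of order $p$, not three, and the coupling you allude to between the slabs must be made explicit for the local condition to survive at all of them while the global coboundary condition fails. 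That explicit construction --- the integral lattice, the action, and the cocycle --- is the entire content of Illengo's argument, and your outline does not yet supply it. To complete the proof you would need to write down the representation and the cocycle concretely and carry out both verifications, which is exactly what \cite{Ill} does.
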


\noindent Other counterexamples to the local-global divisibility by
$p$ in algebraic tori are produced in \cite{DZ}.

\subsection{Counterexamples to Problem \ref{prob1}, Problem \ref{prob2} and Cassels' question in abelian varieties} \label{cass_cs}

\par In 1996 in \cite[Chapter 6, \S9, pag. 61]{CF} Cassels and Flynn produced a counterexample to the
local-global principle for divisibility by $p = 2$ in an abelian surface. They 
considered a curve $\mathcal{C}$ of genus 2 defined over $\QQ$, with equation
$y^2=A(x)B(x)C(x)$, where $A(x),B(x),C(x)\in \QQ[x]$ are quadratic polynomials with constant term equal to 1,
irreducible over $\QQ$, but splitting respectively over $\QQ(\sqrt{2})$, $\QQ(\sqrt{17})$ and $\QQ(\sqrt{34})$.
They showed that the Jacobian $\A$ of $\mathcal{C}$ has a point $P$ locally divisible by $2$ over all p-adic fields $\Q_p$ and over $\RR$, but not divisible by 2 over $\QQ$.
This is a counterexamples to Problem 1 (and to the $r=0$ case of Problem 2) in
abelian varieties and predates the counterexamples in elliptic curves. Since the local divisibility holds for all $p$, then
we have $\Sha(\QQ,\A)\neq 0$. The abelian surface $\A$ is principally polarized and it is conjectured that  $\Sha(\QQ,\A)$ is finite.
In this last case we also have a counterexample to the
local-global disivibility by 2 in $H^r(\QQ,\A)$, for some $r$, as mentioned in Section \ref{Problem2}.
 More generally, in \cite{Cre} Creutz showed some counterexamples to Problem 2 and to Cassels'
question in abelian varieties for every $p$. 

\begin{thm}[Creutz, 2013] \label{thm_13} Let  $k=\QQ(\z_p)$. Let $p, r$ be two prime numbers
satisfying $r\equiv 1 \modn p^2)$ if $p$ is odd or $r\equiv 1 \modn 8)$ if $p=2$.  Let

\begin{equation} f(x)= (x^p-\z_p)(x^p-r)(x^p-\z_pr)\dots (x^p-\z_p^{p-1}r). \end{equation} 

\noindent There are infinitely many
classes $c\in k^*/(k^*)^p$ such that the Jacobian $J$ of the cyclic cover of the projective line $\PP^1(k)$
defined by $y^p=cf(x)$ satisfies $\Sha(k,J)\not\subseteq p H^1(k,J)$. In particular there are infinitely many
non-isomorphic abelian varieties over $k$ with this property.
\end{thm}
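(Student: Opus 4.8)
The plan is to exhibit a class in $\Sha(k,J)$ that is not $p$-divisible in $H^1(k,J)$ by realizing it \emph{explicitly} on the descent side of $J$ and then invoking the criterion of Theorem \ref{thm_cre2}. First I would record the geometry. Using $\prod_{j=0}^{p-1}(x^p-\z_p^j r)=x^{p^2}-r^p$, the polynomial simplifies to $f(x)=(x^p-\z_p)(x^{p^2}-r^p)$, which is separable over $k=\QQ(\z_p)$ for $r$ a rational prime; the relevant curve is the (suitably $c$-twisted) superelliptic curve $C\colon y^p=c\,f(x)$, with Jacobian $J$. The deck transformation $(x,y)\mapsto(x,\z_p y)$ of $C\to\PP^1$ has order $p$, so $\ZZ[\z_p]\subset\mathrm{End}_k(J)$; writing $\mathfrak p=(1-\z_p)$ for the prime above $p$, one has $J[\mathfrak p]\subseteq J[p]$, an $\FF_p$-module. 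Since $p\mid\deg f$, the point at infinity is unramified in $C\to\PP^1$, so the branch locus is cut out by $f$; put $L=k[x]/f(x)$. Superelliptic descent — Shapiro's lemma applied to $\mathrm{Res}_{L/k}\mu_p$ — identifies $H^1(k,J[\mathfrak p])$ with a subquotient of $L^*/(L^*)^p$ cut out by norm conditions (modulo the diagonal image of $k^*/(k^*)^p$), compatibly with passage to every completion.

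Second, I would reduce the statement to a computation in this model. By Theorem \ref{thm_cre2}, together with the Kummer sequence $0\to J(k)/pJ(k)\to H^1(k,J[p])\to H^1(k,J)[p]\to 0$, the inclusion $\Sha(k,J)\subseteq pH^1(k,J)$ fails as soon as one produces $\xi\in\Sha(k,J[p])$, ideally coming from $\Sha(k,J[\mathfrak p])$, whose image $\bar\xi\in\Sha(k,J)$ is nonzero and is not contained in the maximal divisible subgroup $\bigcap_m mH^1(k,J)$; in particular it suffices that $\bar\xi\neq0$ and $\bar\xi\notin pH^1(k,J)$. (Such a $\xi$ also forces $\Sha^1(k,J^\vee[p])\neq 0$, cf.\ Theorem \ref{Creu1}(2); this is a convenient check but is strictly weaker than the statement to be proved.) In the descent model $\xi$ will be the class of an explicit $\theta\in L^*$ assembled from the factors $x-\alpha$ of $f$ — the descent images of differences of ramification points — together with a contribution of the twisting parameter $c$.

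Third, the arithmetic core is to pin down $\theta$ and verify two things. \emph{Local triviality at every place $v$ of $k$:} one must see that $\theta$ lies in $(L\otimes_k k_v)^{*p}$ up to the local image of $J(k_v)$. Away from the places above $p$, $r$, $\infty$ and the primes of bad reduction this is automatic; at the finitely many remaining places it is arranged by the hypotheses — the congruence $r\equiv1\pmod{p^2}$ (resp.\ $r\equiv1\pmod{8}$ for $p=2$) controls the wildly ramified place above $p$ and the factors arising from $x^{p^2}-r^p$, the presence of $\z_p\in k$ makes the $\mu_p$-descent defined over $k$ and fixes the local conditions at $p$, and $c$ is chosen to clear the residual conditions at the place(s) above $r$ — so that $\xi\in\Sha(k,J[p])$ and hence $\bar\xi\in\Sha(k,J)$. \emph{Nontriviality and non-$p$-divisibility of $\bar\xi$:} $\theta$ is \emph{not} a global $p$-th power in $L^*$, which a product-formula obstruction detects through a single $p$-th power residue symbol that consumes precisely the congruence conditions on $r$ and $c$; this gives $\bar\xi\neq0$, and then the description of $J[p]$ as a Galois module together with the criterion of Theorem \ref{thm_cre2} upgrades this to $\bar\xi\notin pH^1(k,J)$.

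The main obstacle is exactly this last point in combination with the everywhere-local triviality: one must thread the congruences so that $\xi$ becomes a coboundary in \emph{every} $H^1(k_v,J[p])$ — forcing $\bar\xi\in\Sha$ — while its descent representative still survives the global power-residue obstruction and, beyond mere nonvanishing, fails to be $p$-divisible in $H^1(k,J)$; the passage from ``$\bar\xi\neq0$'' to ``$\bar\xi\notin pH^1(k,J)$'' is genuinely global and is where one needs the Cassels--Tate duality packaged into Theorem \ref{thm_cre2}. Finally, that the congruence and residue conditions imposed on $c$ have infinitely many prime solutions is a positive-density statement, proved by Chebotarev together with power reciprocity; letting $c$ range over such primes produces infinitely many admissible classes in $k^*/(k^*)^p$, hence the asserted infinite family of abelian varieties over $k$ with $\Sha(k,J)\not\subseteq pH^1(k,J)$.
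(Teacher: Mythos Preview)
The paper does not prove this theorem. It is a survey, and the statement is quoted verbatim as a result of Creutz from \cite{Cre}; the only accompanying text is the one-line introduction ``In \cite{Cre}, Creutz showed some counterexamples'' followed by the statement itself, with no argument. So there is nothing in the paper to compare your proposal against.

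That said, your outline is broadly in the spirit of Creutz's actual argument in \cite{Cre}: one performs explicit $\mathfrak p$-descent on the Jacobian of the superelliptic curve $y^p=cf(x)$ over $k=\QQ(\zeta_p)$, identifies $H^1(k,J[\mathfrak p])$ with a subquotient of $L^*/(L^*)^p$ for $L=k[x]/f(x)$, and then uses the congruence hypotheses on $r$ together with the freedom in $c$ to manufacture a class that is everywhere locally trivial but globally obstructed. Your invocation of Theorem \ref{thm_cre2} as the bridge from ``nontrivial in $\Sha$'' to ``not $p$-divisible in $H^1$'' is also the right mechanism. The parts of your sketch that would need real work to become a proof are precisely the ones you flag: the local analysis at the primes above $p$ and $r$ (where the specific congruences $r\equiv 1\pmod{p^2}$, resp.\ $r\equiv 1\pmod 8$, do the heavy lifting), and the global reciprocity computation showing the descent class is not a $p$-th power. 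But none of this can be checked against the present paper, which simply cites the result.
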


\noindent 
Those counterexamples are over the cyclotomic field $k=\QQ(\z_p)$, but 
the author also deduces counterexamples over $\QQ$, by restriction of scalars.
\noindent To have counterexamples to Problem 1 for the divisibility by $p$ in the Jacobian $J$ of Theorem  \ref{thm_13},
one can take the class $c = 1$.

\section{Other related problems} \label{sec6}

In the literature there are various classical problems and also recent ones  somehow linked
to  Problem \ref{prob1} and Problem \ref{prob2}. We are going to recall briefly some of them.
As already observed in the introduction, if the point $P$ 
in the statement of Problem 1 is the zero point in the group law of $\G$ and we ask that neither $D$ nor $D_v$, for all but finitely many $v$, 
is the zero point itself, then
the question can be reformulated as follows.
 
\begin{Question} \label{probP} If $\G$ admits a $k_v$-rational torsion point of order $q$, for all but
finitely many places $v\in M_k$, can we conclude that $G$ admits a $k$-rational torsion point of order $q$?
\end{Question}

\noindent
This is one of the reasons why the local-global divisibility problem is related to the following problems about torsion points, 
number fields generated by the coordinates of torsion points, existence of isogenies in abelian varieties, etc.
Through all this section, we denote by ${{\mathfrak{p}}}_v$ the prime ideal associated to the valuation $v$
and by $\FF_{v}$ the residue field.

\begin{enumerate} 

\item[i.] In \cite{Katz}  Katz studied this problem formulated by Lang.

\begin{Problem}[Lang, 1981] \label{probK}
 Let $q\geq 2$ be a positive integer.
Let $\A$ be an abelian variety defined over a number field $k$ and let $A_{tors}(k)$ denote
the set of $k$-rational torsion points of $\A$.  For every $v\in M_k$, let $\tilde{\A}_v$ be the reduction of $\A$ modulo
$v$ and let $N(v)$ denote the
number of $\FF_v$-rational points of $\tilde{\A}_v$.  Suppose that the congruence 
$$N(v)\equiv 0 \modn q)$$ holds for a set of places $v$ of Dirichlet density 1. Does there exist an abelian variety
$A'$ that is $k$-isogenous to $A$ and such that
$$\# A'_{tors}(k) \equiv 0 \modn q)?$$ 
\end{Problem}

\noindent  Katz proved that when $\A$ is an elliptic curve or an abelian variety of dimension 2, then we have an affirmative answer to Problem \ref{probK}. 
On the contrary, he produces counterexamples for every abelian variety of dimension $g\geq 3$
and every positive integer $q\geq 3$. 
\par Assume that $\E$ is an elliptic curve with good reduction at $v$.  If $q$ is coprime with the characteristic of the residue field $\FF_{v}$, then by \cite[VII. Proposition 3.1]{Sil}, the group $\E(k_v)[q]$ of the $k_v$-rational $q$-torsion points of $\E$ injects into the group $\tilde{\E}_v(\FF_v)$ 
of the $\FF_v$-rational points of $\tilde{\E}_v$.  Then the existence of a $k_v$-rational point of exact order $q$ implies the existence of
a subgroup of $\tilde{\E}_v(\FF_v)$ of order $q$. If $q\nmid N(v)$, then there are no $k_v$-rational $q$-torsion points in
$\E$. Since $\E(k)[q]$ injects into $\E(k_v)[q]$, this also implies that there are no $k$-rational $q$-torsion points in $\E$. 
Therefore an affirmative answer to Problem \ref{probK} implies an affirmative answer to Question \ref{probP}. 
\par Katz reformulated Lang's question in term of representations to prove some of his results  (see also \cite{Cul}, \cite{Cul2}).

\begin{problem_non}[Lang, Katz, 1981]  \label{probK2}
 Let $q\geq 2$ be a positive integer.
Let $\A$ be an abelian variety defined over a number field $k$. For every place $v$,
denote by $T_v(\A)$ the $v$-adic Tate module of $\A$, by $\rho_v$ the associated $v$-adic
representation and by $\bar{\rho_v}$ the associated mod $v$ representation. 
If for every $\sigma\in G_k$, we have $\det(1-\bar{\rho}_v(\sigma))= 0$ in $k_v$, is it true that
the semisimplification of   $T_v(\A)\otimes k_{v}$ contains the trivial representation?
\end{problem_non}
\bigskip

 \item[ii.] Owing again to the particular case when $P\in \G(k)$ is the zero point, 
 Problem 1
 is also linked to the famous \emph{Support Problem}, considered
 by  Corrales-Rodrig\'a\~{n}ez and Schoof
in \cite{CRS}.  The original question about integers was 
posed by Erd\H{o}s in 1988,   during a conference in number theory that took place in Banff.

\begin{Problem}[Support Problem,  Erd\H{o}s 1988] \label{ProbE}
Let $x,y,q$ be positive integers such that $x^q\equiv 1 \modn p)$ if and only if 
 $y^q\equiv 1 \modn p)$, for every prime number $p$. Can we conclude that
$x=y$?
\end{Problem}

\noindent The name \emph{Support Problem} is a consequence of the name \emph{support} used to indicate
the set of prime numbers dividing $x^q-1$. In \cite{CRS}, Corrales-Rodrig\'a\~{n}ez and Schoof
answered affirmatively to Problem \ref{ProbE}.  Moreover they show that the answer is affirmative even with the
hyphoteses holding for all but finitely many prime numbers $p$. They also considered 
the question on a number field $k$ and prove the
following statement.

\begin{thm}[Corrales-Rodrig\'a\~{n}ez, Schoof, 1997] 
Let $k$ be a number field and let $x,y\in k^*$. Assume that for almost all
valuations $v$ of $k$, and for all positive integers $q$ one
has 
$$y^q\equiv 1 \modn {{\mathfrak{p}}}_v) \hspace{0.3cm} whenever \hspace{0.3cm} x^q\equiv 1 \modn {{\mathfrak{p}}}_v).$$
Then $y$ is a power of $x$.
\end{thm}

\noindent In addition, they considered the same question in the case of elliptic curves.

\begin{Problem}  [Corrales-Rodrig\'a\~{n}ez, Schoof, 1997] \label{ProbS}
Let $\E$  be an elliptic curve over a number field $k$. Let $P, Q\in \E(k)$. Assume that for every positive integer $q$
and all but finitely many places $v$ of $k$ for which $\E$ has good reduction, we have

$$qP\equiv 0  \textrm{ in } \FF_v  \hspace{0.3cm}  whenever \hspace{0.3cm} qQ\equiv 0  \textrm{ in } \FF_v.$$

\noindent What can we conclude about $P$ and $Q$?  
\end{Problem}

\noindent One of the main differences between Question 1 or Problem \ref{probK} and the Support Problem is that in this last case one considers  \emph{two} points having the same behaviour with respect a certain property
 and wonders about their possible relation.
 
\par
\noindent Corrales-Rodrig\'a\~{n}ez and Schoof showed that two points $P$ and $Q$ satisfying the assumptions
of Problem \ref{ProbS} are actually linked as follows.

\begin{thm}[Corrales-Rodrig\'a\~{n}ez, Schoof,1997] \label{teoS}
Let $\E$ be an elliptic curve defined over a number field $k$. Let $P, Q\in \E(k)$. If for every positive integer $q$
and all but finitely many places $v$ of $k$ for which $\E$ has good reduction, one has

$$qP\equiv 0  \textrm{ in } \E(\FF_v)  \textrm{  whenever } qQ\equiv 0  \textrm{ in } \E(\FF_v),$$

\noindent then either $Q=\phi(P)$, for some $k$-rational endomorphism $\phi$ of $\E$ or both $P$ and $Q$ are
torsion points.
\end{thm} 

The same question was afterwards considered for abelian varieties by Larsen \cite{Lar}, by Demeyer and Perucca \cite{DP} \cite{Peru} \cite{Per3}, by Banaszak, Gajda and Kraso\'n \cite{BGK} \cite{BGK2} and by Baran\'czuk \cite{Bar}. In particular in \cite{Lar} Larsen proved this generalization of Theorem \ref{teoS}.

\begin{thm}[Larsen, 2003] \label{teoL}
Let $\A$  be an abelian variety over a number field $k$. Let $P, Q\in \A(k)$. Assume that for every positive integer $q$
and all but finitely many places $v$ of $k$ for which $\A$ has good reduction, we have

$$qP\equiv 0  \textrm{ in } \E(\FF_v) \hspace{0.2cm} \Longrightarrow \hspace{0.2cm} qQ\equiv 0  \textrm{ in } \E(\FF_v).$$

\noindent Then there exists a $k$-endomorphism $\phi$  of $\A$ and a positive integer $m$  such that $\phi(P)=mQ$.  
\end{thm}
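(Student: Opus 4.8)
\noindent\emph{Proof proposal.} The plan is to reduce the assertion to an $\ell$-adic Chebotarev analysis in the $\ell$-power division fields of $\A$, $P$ and $Q$, deriving a contradiction from the assumption that $Q$ fails to lie in the $\mathrm{End}_k(\A)$-span of $P$. I would first dispose of the torsion case: if $P$ has finite order $m$, then $mP=0$ identically, so $m\bar{Q}_v=0$ in $\A(\FF_v)$ for almost all $v$, hence $mQ=0$ and one takes $\phi=0$, $q=m$. So assume $P$ has infinite order. Choosing $n=\mathrm{ord}(\bar{P}_v)$ in the hypothesis shows it is equivalent to $\mathrm{ord}(\bar{Q}_v)\mid\mathrm{ord}(\bar{P}_v)$ for all but finitely many $v$; in particular $v_\ell(\mathrm{ord}(\bar{Q}_v))\le v_\ell(\mathrm{ord}(\bar{P}_v))$ for every prime $\ell$ and almost all $v$. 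Since $\A(k)$ is finitely generated, the conclusion ``$qQ=\phi(P)$ for suitable $q\in\ZZ_{>0}$, $\phi\in\mathrm{End}_k(\A)$'' amounts to the image of $Q$ in $\A(k)\otimes\QQ$ lying in the subspace $(\mathrm{End}_k(\A)\otimes\QQ)\cdot P$, and a subspace of a finite-dimensional $\QQ$-space is recovered from its $\QQ_\ell$-span; so I would fix one prime $\ell$ and suppose, for contradiction, that the image of $Q$ in $\A(k)\otimes\QQ_\ell$ is \emph{not} in $(\mathrm{End}_k(\A)\otimes\QQ_\ell)\cdot P$.

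Next comes the $\ell$-adic machinery. After a finite extension of $k$ — harmless, since it only shrinks the set of relevant places and a Galois-averaging argument at the end returns $\phi$ to $\mathrm{End}_k(\A)$ — I may assume, by Bogomolov's theorem, that the image of $G_k$ in $\Aut(T_\ell(\A))$ contains the homotheties $1+\ell^{m}\ZZ_\ell$ for some $m\ge1$. Put $k_\infty:=k(\A[\ell^\infty])$, let $k_P$ be obtained from $k_\infty$ by adjoining all $\ell$-power division points of $P$, and $k_{P,Q}$ by further adjoining those of $Q$. To a place $v$ of good reduction with $v\nmid\ell$ one attaches the action $\sigma_v$ of $\mathrm{Frob}_v$ on $\A[\ell^\infty]$ and the Kummer values $t_P(v),t_Q(v)\in T_\ell(\A)$ recording how $\mathrm{Frob}_v$ moves the division points of $P$ and of $Q$. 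The key dictionary is that $v_\ell(\mathrm{ord}(\bar{P}_v))$ depends only on $(\sigma_v,t_P(v))$: when $\sigma_v=[1+\ell^{m}]$ one has $\A(\FF_v)[\ell^\infty]=\A[\ell^{m}]$, and $v_\ell(\mathrm{ord}(\bar{P}_v))$ is $m$ minus the largest power of $\ell$ dividing the class of $t_P(v)$ in $T_\ell(\A)/(\sigma_v-1)T_\ell(\A)$; likewise $v_\ell(\mathrm{ord}(\bar{Q}_v))$ depends only on $(\sigma_v,t_Q(v))$.

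Now the assumption $Q\notin(\mathrm{End}_k(\A)\otimes\QQ_\ell)\cdot P$ feeds into Kummer theory for abelian varieties (Ba\v{s}makov, Ribet, and later refinements): the Kummer class of $Q$ is independent of that of $P$, so $\Gal(k_{P,Q}/k_P)$ already surjects onto an open subgroup of a suitable quotient of $T_\ell(\A)$ ``in the $Q$-direction'', unconstrained by the $P$-coordinate. Hence I can pick a conjugacy class in $\Gal(k_{P,Q}/k)$ on which $\sigma=[1+\ell^{m}]$, the value $t_P$ is as $\ell$-divisible as possible — forcing $v_\ell(\mathrm{ord}(\bar{P}_v))$ to be small — while $t_Q$ is a primitive vector — forcing $v_\ell(\mathrm{ord}(\bar{Q}_v))$ close to $m$. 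By Chebotarev there is a place $v$ outside the finite exceptional set with Frobenius in this class, and then $v_\ell(\mathrm{ord}(\bar{Q}_v))>v_\ell(\mathrm{ord}(\bar{P}_v))$, a contradiction. Therefore the image of $Q$ in $\A(k)\otimes\QQ$ lies in $(\mathrm{End}_k(\A)\otimes\QQ)\cdot P$, and clearing denominators (and killing the resulting torsion) yields the desired $q$ and $\phi$.

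The step I expect to be the main obstacle is this last one: the precise independence of the Kummer cocycles of $P$ and $Q$ when $Q\notin(\mathrm{End}_k(\A)\otimes\QQ)\cdot P$, together with the exact computation relating the Frobenius data $(\sigma_v,t_P(v),t_Q(v))$ to the valuations $v_\ell(\mathrm{ord}(\bar{P}_v))$, $v_\ell(\mathrm{ord}(\bar{Q}_v))$. This rests on Ribet's theorem on Kummer theory for abelian varieties — itself requiring control of the $\mathrm{End}_k(\A)$-module structure of $T_\ell(\A)$ and the largeness of the Galois image (Faltings, Bogomolov) — and, when $\A$ is not $k$-simple, an additional layer of (routine but delicate) linear algebra over $\mathrm{End}_k(\A)\otimes\QQ$, a matrix algebra over a division algebra.
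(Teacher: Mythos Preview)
The paper does not contain a proof of this theorem: it is a survey, and Theorem~\ref{teoL} is merely quoted from Larsen's paper \cite{Lar} as background in the section on related problems. There is therefore nothing in the present paper to compare your proposal against.

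That said, your sketch is a faithful outline of Larsen's actual argument in \cite{Lar}: reduction to the infinite-order case, the reformulation as $\mathrm{ord}(\bar Q_v)\mid\mathrm{ord}(\bar P_v)$, passage to a single prime $\ell$, Bogomolov's theorem to obtain homotheties in the image of Galois, the dictionary between Frobenius data and $\ell$-parts of the orders of reductions, and a Chebotarev contradiction. You have also correctly located the delicate point, namely the independence of the Kummer classes of $P$ and $Q$ when $Q\notin(\mathrm{End}_k(\A)\otimes\QQ)\cdot P$; Larsen handles this not by invoking Ribet's Kummer theory in full generality but by a more hands-on argument exploiting the homothety, so if you intend to write this up you should either check that the Ribet-style statement you need is actually available in the required form for arbitrary abelian varieties, or follow Larsen's direct route.
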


\noindent In \cite{DP}  Demeyer and Perucca showed an explicit $m$. In the same paper, as well as in \cite{Per3} they also considered the question for tori. Moreover Li treated it for Drinfeld modules  in 
\cite{Li}. In \cite{KP} Khare and Prasad studied the same local-global problem for endomorphisms of an abelian variety (and, more generally, of a commutative algebraic group) instead of points.  Other similar problems are treated in \cite{Kow} and in \cite{AR}.

\bigskip
 \item[iii.]  In \cite{Won}, Wong considered the following question.

\begin{Problem}[Wong, 2000] \label{probW}
Let $\G$ be an algebraic group defined over a number field $k$ and $q>1$ a positive integer. 
Denote by $\Lambda$ a subset of $M_k$ of density 1 and by
 $U$  a finite subset of the set of $k$-rational points $\G(k)$ of $G$. For every $v\in M_k$ let
$\tilde{\G}_v$ be the reduction of $\G$ modulo $v$.  
Assume that for every $v\in\Lambda$, there exists a non-zero point $P_{v}\in U$,
whose image in $\tilde{\G}_v(\FF_{v})$ is a $q$-th power of a point in $\tilde{\G}_v(\FF_{v})$.
Does $U$ contains a $q$-th power of an element of $\G(k)$?
\end{Problem}

\noindent The answer clearly depends on $U$, as well as on $k$ and $q$.
When $U=\{P\}$, with $P$ a $k$-rational point of $\G$,  Problem  \ref{probW} is similar to Problem \ref{prob1}, 
but here one considers the $q$-divisors
of the image of $P$ in $\tilde{\G}_v(\FF_{v})$, instead of the $q$-divisors of $P$ in $\G(k_v)$. Problem  \ref{probW} was even formulated quite
at the same time than Problem 1.  It is also related
to Problem \ref{ProbS} in abelian varieties, in the case when $P$ is the zero point. 
The main result about Wong's question is the following.

\begin{thm}[Wong 2000]
Let $\A$ be an abelian variety defined over a number field $k$, let $U=\{P\}$, with $P\in \A(k)$ and let $q$ be a positive integer.
Assume that one of the following conditions hold

\medskip
\begin{enumerate}
\item[a)] $H^1(\Gal(k(\A[q])/k),\A[q])=0$;
\item[b)] $\A$ is an elliptic curve and $q=p$.
\end{enumerate}
\medskip

\noindent If the image of  $P$ in $\tilde{\A}(\FF_v)$ is the $q$-th power of an element of $\tilde{\A}(\FF_v)$ for a set of places
$v$ of density 1, then $P$ is the $q$-th power of a point in $\A(k)$.
\end{thm}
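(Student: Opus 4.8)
The plan is to observe that for the singleton $S=\{P\}$ Wong's question coincides, after a translation of the local hypothesis, with the local-global divisibility Problem \ref{prob1} for $\A$ over $k$, so that the cohomological apparatus of Section 3 applies directly. The one new ingredient is to convert divisibility of the reduction $\bar P$ in $\A(\FF_v)$ into divisibility of $P$ itself in $\A(k_v)$; once this is in place, conditions (a) and (b) each force the relevant obstruction to vanish. The easy implication requires no hypothesis: if $P=qD$ with $D\in\A(k)$, then reducing modulo any place $v$ of good reduction gives $\bar P=q\bar D$ in $\A(\FF_v)$, so $\bar P$ is a $q$-th power for all but finitely many $v$, in particular on a set of density $1$.

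For the converse I would fix a place $v$ of good reduction with $v\nmid q$ and use the reduction exact sequence
$$0\longrightarrow \hat{\A}(\mathfrak{m}_v)\longrightarrow \A(k_v)\longrightarrow \A(\FF_v)\longrightarrow 0,$$
where $\hat{\A}(\mathfrak{m}_v)$ is the group of points of the formal group. Since the residue characteristic does not divide $q$, the kernel is a pro-$\ell$ group on which multiplication by $q$ is an automorphism, hence uniquely $q$-divisible. Applying the snake lemma to multiplication by $q$ then yields $\A(k_v)/q\A(k_v)\simeq \A(\FF_v)/q\A(\FF_v)$, so for every such $v$ the reduction $\bar P$ is $q$-divisible in $\A(\FF_v)$ if and only if $P$ is $q$-divisible in $\A(k_v)$. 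Discarding the finitely many places of bad reduction and those dividing $q$ leaves the density unchanged, so the hypothesis ``$\bar P$ is a $q$-th power in $\A(\FF_v)$ for a set of $v$ of density $1$'' is equivalent to ``$P\in q\A(k_v)$ for a set of $v$ of density $1$'', which is precisely the local hypothesis of Problem \ref{prob1} in the density-$1$ formulation of Remark \ref{density}.

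It then remains to invoke the cohomological interpretation. Writing $K=k(\A[q])$, $G=\Gal(K/k)$ and choosing $D\in\A(\bar k)$ with $qD=P$, the cocycle $Z_\sigma=\sigma(D)-D$ of \eqref{eq1} represents the obstruction, and by Proposition \ref{pro1} its vanishing in $H^1(G,\A[q])$ is equivalent to global $q$-divisibility of $P$. Under condition (a) the whole group $H^1(G,\A[q])$ vanishes, hence so does its subgroup $H^1_{\textrm{loc}}(G,\A[q])$, and Theorem \ref{h1loc} yields the local-global divisibility by $q$; combined with the equivalence of the preceding paragraph this is exactly the asserted conclusion. Under condition (b), with $\A=\E$ an elliptic curve and $q=p$ prime, local-global divisibility by $p$ holds by Tate's Lemma \ref{Tate} together with the Kummer sequence, as recalled in Subsection \ref{sec_Tate}, and the same translation completes the argument.

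I expect the main technical point to be the identification $\A(k_v)/q\A(k_v)\simeq\A(\FF_v)/q\A(\FF_v)$ together with the bookkeeping showing that the density-$1$ hypothesis survives the removal of the finitely many excluded places; everything after that is a direct appeal to Theorem \ref{h1loc} and to Tate's Lemma \ref{Tate}, which carry the real weight of forcing the obstruction group to be trivial.
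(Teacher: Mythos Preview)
The paper does not prove this theorem: it is a survey, and Wong's result is merely quoted from \cite{Won} without proof. So there is no ``paper's own proof'' to compare against. That said, your argument is correct and, pleasantly, it is essentially self-contained within the machinery the survey itself develops: you reduce Wong's residue-field hypothesis to the $k_v$-hypothesis of Problem~\ref{prob1} via the standard formal-group argument (the kernel of reduction is uniquely $q$-divisible when $v\nmid q$), invoke Remark~\ref{density} for the density-$1$ formulation, and then dispatch (a) by Theorem~\ref{h1loc} and (b) by the result recalled in Subsection~\ref{sec_Tate}. Wong's original proof in \cite{Won} is organised somewhat differently (he works directly with Kummer theory and Chebotarev rather than appealing to a pre-packaged $H^1_{\textrm{loc}}$), but the underlying mechanism is the same; your write-up has the advantage of making transparent that case (a) is an immediate corollary of the Dvornicich--Zannier framework once the local conditions are translated from $\FF_v$ to $k_v$.
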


 \item[iv.]  Let $\E$ be an elliptic curve defined over $k$. 
It is well-known that there is a close connection between the existence of a $k$-rational  torsion point of order $p$ in $\E$
and the existence of a $k$-rational isogeny $\phi:\E\rightarrow \E$ of degree $p$. Therefore Question \ref{probP} (as well as the other mentioned problems) is also linked to the
following  local-global problem for existence of isogenies of prime degree in elliptic curves. 

\begin{Problem}[Sutherland, 2012] \label{probS}
Let $k$ be a number field and $\E$ be an elliptic curve defined over $k$.
Assume that $\E$ admits a $k_v$-rational isogeny of degree $p$, for all places $v$ of $k$.
 Does $\E$ admit a $k$-rational isogeny of degree $p$? 
\end{Problem}

\noindent As we recalled in Section \ref{counter1}, the connection between the existence of isogenies and Problem \ref{prob1}
is also underlined in \cite{DZ3}. The converse of Problem \ref{probS} is trivially true.
Sutherland proved the following result.

\begin{thm}[Sutherland, 2012] \label{sut}
Let $p$ be a prime number. Assume that $\sqrt{\left(\frac{-1}{p}\right)p}\notin k$ and that $\E$ admits a
rational isogeny of degree $p$ locally at a set of primes with density one. Then $\E$ admits
an isogeny of degree $p$ at a quadratic extension of $k$. If $p\equiv 1 \modn 4)$ or $p<7$,
then $\E$ admits a $k$-rational isogeny of degree $p$.  
\end{thm}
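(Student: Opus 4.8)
The plan is to translate the arithmetic hypothesis into a statement about the mod-$p$ Galois image and then to classify that image. Let $\rho\colon G_k\to\GL_2(\FF_p)$ be the representation on $\E[p]$, let $\bar\rho\colon G_k\to\PGL_2(\FF_p)$ be its projectivization, and put $G:=\rho(G_k)\cong\Gal(k(\E[p])/k)$ with projective image $\bar G:=\bar\rho(G_k)$. A $k$-rational isogeny of degree $p$ is the same as a $G_k$-stable line in $\E[p]$, i.e. a point of $\PP^1(\FF_p)$ fixed by all of $G$; an isogeny over a quadratic extension $k'/k$ is a point fixed by the index-$2$ subgroup $\rho(G_{k'})$. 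First I would note that for a prime $v$ of good reduction, unramified in $k(\E[p])$, the curve $\E$ admits a $k_v$-rational $p$-isogeny precisely when $\rho(\mathrm{Frob}_v)$ has an eigenvalue in $\FF_p$. Since the local hypothesis is assumed on a set of density one, the Chebotarev density theorem lets the Frobenii range over all of $G$, so the hypothesis becomes the single group-theoretic condition
\begin{equation*}
\text{every } g\in G \text{ has an eigenvalue in } \FF_p. \tag{$\ast$}
\end{equation*}

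Next I would apply the classification of subgroups of $\GL_2(\FF_p)$, equivalently of $\bar G\le\PGL_2(\FF_p)$: up to conjugacy $\bar G$ is contained in a Borel (the reducible case), in the normalizer of a split or of a nonsplit Cartan, or has projective image $\Alt_4$, $\Sym_4$ or $\Alt_5$. The theorem is then a matter of deciding which of these survive $(\ast)$ and what each forces. In the reducible case $G$ fixes a line, so $\E$ already has a $k$-rational $p$-isogeny. In the nonsplit Cartan and its normalizer every non-scalar semisimple element has eigenvalues generating $\FF_{p^2}$, so $(\ast)$ forces the Cartan part of $\bar G$ to be scalar, degenerating the image; this case can be discarded. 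For the exceptional groups I would exhibit, in each of $\Alt_4,\Sym_4,\Alt_5$, an element of order prime to $p$ that is of nonsplit type (its order divides $p+1$ but not $p-1$) and hence has no eigenvalue in $\FF_p$, contradicting $(\ast)$; this eliminates the exceptional images for all but finitely many small $p$. The one genuinely surviving possibility is therefore the normalizer of a split Cartan.

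In that case write $\bar G\le N$ with $N$ the normalizer of a split Cartan $C\le\PGL_2(\FF_p)$ and $[N:C]=2$. The Cartan $C$ fixes the two rational points $x,y\in\PP^1(\FF_p)$ spanned by its eigenlines, while the nontrivial coset of $N$ swaps them. Hence $\bar G\cap C$ has index at most $2$ in $\bar G$ and fixes $x$; letting $k'/k$ be the (at most quadratic) extension for which $\bar\rho(G_{k'})\subseteq C$, the image over $k'$ is reducible and $\E$ acquires a $p$-isogeny defined over $k'$. This already proves the first assertion of the theorem for every $p$.

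The descent from $k'$ to $k$ when $p\equiv1\pmod4$ or $p<7$ is the delicate point, and I expect it to be the main obstacle. In a basis of $C$-eigenlines, a matrix of the non-Cartan coset has the shape $\left(\begin{smallmatrix}0&b\\ c&0\end{smallmatrix}\right)$, with characteristic polynomial $X^2-bc$ and determinant $-bc$; thus for every $g\in G$ projecting into $N\setminus C$, condition $(\ast)$ forces $-\det(g)$ to be a square modulo $p$, i.e. $\left(\tfrac{-\det g}{p}\right)=1$. Now $\det\circ\rho$ is the mod-$p$ cyclotomic character, so the quadratic character $\psi:=\left(\tfrac{\,\cdot\,}{p}\right)\circ\det\circ\rho\colon G_k\to\{\pm1\}$ cuts out $k\!\left(\sqrt{(\tfrac{-1}{p})p}\right)$ over $k$, which is nontrivial by hypothesis. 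When $p\equiv1\pmod4$ one has $\left(\tfrac{-1}{p}\right)=1$, so the constraint reads $\psi(g)=1$ for every $g$ lying over $N\setminus C$; since $\psi$ is a homomorphism, being identically $1$ on a full coset of an index-$2$ subgroup forces $\psi\equiv1$ everywhere, i.e. $\sqrt{(\tfrac{-1}{p})p}\in k$, a contradiction. Hence the non-Cartan coset is empty, $\bar G=\bar G\cap C$ is reducible over $k$, and $\E$ has a genuine $k$-rational $p$-isogeny. The remaining primes $p<7$ (essentially $p=2,3$) lie outside this congruence argument and I would settle them by direct inspection of the few relevant subgroups of $\GL_2(\FF_p)$. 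The technical heart of the whole proof is exactly this determinant/quadratic-residue bookkeeping — tracking scalars, the split-versus-nonsplit dichotomy, and the precise role of the hypothesis $\sqrt{(\tfrac{-1}{p})p}\notin k$.
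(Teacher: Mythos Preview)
The paper is a survey and does not prove this theorem; it merely quotes Sutherland's result \cite{Sut}. So there is no ``paper's proof'' to compare against, and your proposal must be judged on its own.

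Your overall architecture is the right one and matches Sutherland's: reduce via Chebotarev to the group-theoretic condition $(\ast)$, run through the Dickson classification, land in the normalizer of a split Cartan, and then use the determinant/Legendre-symbol calculation to descend to $k$ when $p\equiv 1\pmod 4$. The split-Cartan-normalizer analysis and the quadratic-character argument at the end are correct.

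There is, however, a genuine gap in your treatment of the exceptional images $A_4,S_4,A_5$. You propose to eliminate them by producing an element whose order divides $p+1$ but not $p-1$. That does not work for infinitely many $p$: for instance, when $p\equiv 1\pmod{12}$ every element of $A_4\subset\PGL_2(\FF_p)$ has order $1,2$ or $3$, all of which divide $p-1$, so every element is split and $(\ast)$ holds for $\bar G=A_4$; yet $A_4$ has no orbit of size at most $2$ on $\PP^1(\FF_p)$ (stabilizers are cyclic of order $\le 3$, so orbits have size $4,6$ or $12$), and hence no isogeny appears even over a quadratic extension. The same phenomenon occurs for $A_5$ when $p\equiv 1\pmod{30}$. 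Thus $(\ast)$ alone cannot exclude the exceptional cases, and your first conclusion (isogeny over a quadratic extension) is unproved as stated.

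What is missing is exactly the hypothesis $\sqrt{(\tfrac{-1}{p})p}\notin k$, which you only invoke at the very end. That hypothesis says the mod-$p$ cyclotomic character $\det\circ\rho$ is not valued in squares, i.e.\ $\bar G\not\subseteq\PSL_2(\FF_p)$. Since $A_4$ and $A_5$ have no subgroup of index $2$, any embedding of them into $\PGL_2(\FF_p)$ lands inside $\PSL_2(\FF_p)$, and they are therefore excluded by the hypothesis; $S_4$ is then handled by combining this with the order-$4$ element (nonsplit when $p\equiv 3\pmod 4$) or the involutions in $S_4\setminus A_4$. You should move the use of $\sqrt{(\tfrac{-1}{p})p}\notin k$ forward to this point: it is needed for the first assertion of the theorem, not only for the refinement.
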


\noindent When $k=\QQ$, he furthermore showed that  the question has an affirmative answer for
every prime $p\neq 7$. If $p=7$ there exists only one counterexample up to isomorphism.
The counterexample is given by the elliptic curve with equation

$$y^2+xy=x^3-x^2-107x-379,$$

\noindent admitting an isogeny of degree 7 locally at every prime of good reduction and over $\RR$, but
admitting no rational isogenies of degree 7. Anyway, according to Theorem \ref{sut}, the curve admits
a $k$-rational isogeny over a quadratic extension $k$ of $\QQ$. In addition, when $k$ is a number field
containing the quadratic subfield of $\QQ(\z_p)$, the author gives a classification of the curves
for which the principle fails. \par The study was completed in \cite{BC} by Banwait and Cremona for
number fields $k$ that do not contain the quadratic subfield of $\QQ(\z_p)$. In particular they show
all possible elliptic curves such that the principle fails when $k$ is a quadratic extension.
\par In \cite{Anni} Anni gives an upper
bound for the primes $p$ such that the local-global divisibility for existence of isogenies of degree
$p$ may fail in elliptic curves over a number field $k$. This bound depends
only on the degree of $k$ and on its discriminant.
\par In the recent paper \cite{Vog}, Vogt considers Problem \ref{probS} for
rational isogenies of arbitrary degree $q$. In particular  he shows 
that for a fixed number field $k$ and a fixed positive integer $q$ there are only finitely many non-isomorphic elliptic curves for which the local-global existence of a rational isogeny of degree $q$ fails.

\item[v.] Having investigated about the vanishing of two subgroups of 
$H^1(G,\G[p^l])$, we have to recall that a very interesting question is about the vanishing of this
group itself.  In the case of elliptic curves, this problem has been especially investigated
in \cite{Cha} and in the mentioned \cite{LW}, in which the authors prove the following statement.

\begin{thm}[Lawson, Wuthrich, 2016]
Let $\E$ be an elliptic curve defined over $\QQ$. The group $H^1(G,\E[p])$ is trivial except in the following cases

\medskip
\begin{enumerate}
\item[1.] $p = 3$, there is a rational point of order $3$ on $\E$, and there are no other isogenies of degree $3$ from
$\E$ that are defined over $\QQ$;
\item[2.] $p = 5$ and the quadratic twist of $\E$ by $D = 5$ has a rational point of order $5$, but no other isogenies
of degree $5$ defined over $\QQ$;
\item[3.] $p = 11$ and $\E$ is the curve labeled as 121c2 in Cremona’s label, given by the global minimal
equation $y^2+xy=x^3+x^2-3632x+82757$.
\end{enumerate}
\medskip

\noindent In each of these cases, $H^1(G,\E[p])$ has $p$ elements.
\end{thm}

\noindent For a general commutative algebraic group $\G$, sufficient conditions to the
vanishing of $H^1(G,\G[p])$ are given by Nori in \cite[Theorem E]{Nori}. 

\begin{thm}[Nori, 1972]
There exists a constant $c(n)$ depending only on $n$ such that if  
$p>c(n)$ and $G\leq \GL_n(\FF_p)$ acts semisimply on $\FF_p^n$, then
$$H^1(G,\FF_p^n)=0.$$
\end{thm}

\noindent Many other authors have investigated about the vanishing of the group
$H^1(\Gamma,M)$, where $\Gamma$ is a group and $M$ is a $\Gamma$-module
(see for examples among others \cite{CPS}, \cite{CPS2}, \cite{Georgia}).

 \item[vi.] Another question, that is not a local-global one, but it is strongly related to Problem 1 (and
consequently to Problem 2) is the classification of all $q$-division fields $k(\G[q])$, for a fixed integer $q$.
In fact, information about the extension  $k(\G[q])/k$ provides information about
the Galois group $G=\Gal(k(\G[q])/k)$ and then about the local cohomology group
$H^1_{\loc}(G,\G[q])$ and the Tate-Shafarevich group $\Sha(k,\G[q])$. In particular in \cite{Pal2} the interest of classifying
all elliptic curves such that $\QQ(\E[3])=\QQ(\z_3)$ was motivated by the possible applications
to Problem 1. Anyway, independently from the local-global divisibility, 
an interesting question  is to understand when the field $k(\E[p])$ is as small as possible, i.e. $k(\E[p])=k(\z_p)$.
We have already mentioned that  Merel and Stein \cite{MS} and Rebolledo \cite{Reb} 
proved that $\QQ(\E[p])=\QQ(\z_p)$ implies 
$p\in \{2,3,5\}$ or $p>1000$.  The curves with $\QQ(\E[2])=\QQ(\z_2)$ (resp. $k(\E[2])=k(\z_2)$) are
the ones with two rational (resp. $k$-rational)  torsion points of order 2, that are linearly independent.
All the curves with $\QQ(\E[3])=\QQ(\z_3)$ (resp. $k(\E[3])=k(\z_3)$)  
are shown in \cite{Pal2} (resp. \cite{BP2}). All the
curves with $\QQ(\E[5])=\QQ(\z_5)$ were lately classified in \cite{GJ} by Gonz\'{a}lez-Jim\'{e}nez and Lozano-Robledo.
\noindent They also proved that if $\QQ(\E[q])=\QQ(\z_q)$, for any integer $q$, then
$q\in \{2,3,4,5\}$ and describe the family of elliptic curves such that $\QQ(\E[4])=\QQ(\z_4)$.
Moreover, they studied some properties of  the extension $\QQ(\E[q])/\QQ$ in the case when it is abelian. In particular
they describe all possible abelian Galois groups $\Gal(\QQ(\E[q])/\QQ)$ and prove the following statement.

\begin{thm}[Gonz\'{a}lez-Jim\'{e}nez, Lozano-Robledo, 2016]
Let $\E$ be an elliptic curve defined over $\QQ$ and let $q$ be a positive integer.
Assume that  $\QQ(\E[q])/\QQ$ is abelian. Then $n\in\{2,3,4,5,6,8\}$.
\end{thm}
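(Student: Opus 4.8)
\medskip\noindent\textbf{Sketch of a proof.} (We take $q\ge 2$ throughout, and read the conclusion as $q\in\{2,3,4,5,6,8\}$.)

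The plan is to reduce to prime powers, then combine the group theory of $\GL_2(\ZZ/\ell^a\ZZ)$ with Kronecker--Weber and the arithmetic of modular curves. First I would reduce: writing $q=\prod_i\ell_i^{a_i}$, the decomposition $\E[q]=\bigoplus_i\E[\ell_i^{a_i}]$ gives $\QQ(\E[q])=\QQ(\E[\ell_1^{a_1}])\cdots\QQ(\E[\ell_r^{a_r}])$, and since a compositum of abelian extensions of $\QQ$ is abelian while every subextension of an abelian extension is abelian, $\QQ(\E[q])/\QQ$ is abelian if and only if each $\QQ(\E[\ell_i^{a_i}])/\QQ$ is. So it suffices to determine the prime powers $\ell^a$ for which some $\E/\QQ$ has $\QQ(\E[\ell^a])/\QQ$ abelian, and then which of them can occur simultaneously on one curve.

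For the prime-power step, fix $\ell^a$ and let $G=\Gal(\QQ(\E[\ell^a])/\QQ)\subseteq\GL_2(\ZZ/\ell^a\ZZ)$ be abelian. The Weil pairing identifies $\det$ with the mod-$\ell^a$ cyclotomic character, so $\det\colon G\to(\ZZ/\ell^a\ZZ)^*$ is surjective, whence $\QQ(\z_{\ell^a})\subseteq\QQ(\E[\ell^a])$, and Kronecker--Weber gives $\QQ(\E[\ell^a])\subseteq\QQ(\z_m)$ for some $m$. Next I would invoke the classification of abelian subgroups of $\GL_2(\ZZ/\ell^a\ZZ)$: one with surjective determinant lies, up to conjugacy, in a split or nonsplit Cartan subgroup (for odd $\ell$ the remaining maximal abelian subgroups sit in a Borel and have determinant landing in the squares, hence are excluded). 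Thus $\E$ gives a non-cuspidal rational point on one of the corresponding modular curves $X_G$ — refinements of $X_{\mathrm{sp}}(\ell^a)$, $X_{\mathrm{ns}}(\ell^a)$, $X_0(\ell^a)$ or $X_1(\ell^a)$. For $\ell\ge 7$ these have no non-cuspidal, non-CM rational point: lying in a Cartan mod $\ell$ forces a rational $\ell$-isogeny on $\E$ (or an $\ell$-isogenous curve), so Mazur's isogeny and torsion theorems \cite{Maz} leave finitely many primes, and those are eliminated using the known rational points of the relevant positive-genus modular curves together with Merel's result (cf.\ the excerpt, \cite{Mer}) bounding the borderline subcase $\QQ(\E[p])=\QQ(\z_p)$; the finitely many CM $j$-invariants over $\QQ$ are handled separately, their $\ell$-division fields being non-abelian for $\ell\ge 7$. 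This leaves $\ell\in\{2,3,5\}$, and a finer analysis of the $\ell$-adic image — for $\ell=5,3$ excluding $a\ge 2$ via positive-genus refinements of $X_0(\ell^2)$ (using $Y_0(125)(\QQ)=\emptyset$, \cite{Ken}), and for $\ell=2$ the explicit classification of abelian $2$-adic images with full determinant realizable over $\QQ$ — yields $\ell^a\in\{2,4,8,3,5\}$.

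Finally I would settle the combinations. Among $\{2,4,8,3,5\}$ only the coprime pair $\{2,3\}$ can occur simultaneously, and curves with $\QQ(\E[2])/\QQ$ and $\QQ(\E[3])/\QQ$ both abelian (which exist) give $q=6$ by compositum. For every other pair, simultaneous abelianness is impossible: if, say, $\QQ(\E[4])/\QQ$ and $\QQ(\E[3])/\QQ$ were both abelian, then $\QQ(\E[12])/\QQ$ would be abelian with its group embedded in $\GL_2(\ZZ/12\ZZ)$ as an abelian subgroup of surjective determinant, and intersecting the Cartan constraints from the $4$-part and the $3$-part would force $\E$ onto a rational point of a modular curve of genus $\ge 1$ (or one whose rational points are only cusps or CM points), a contradiction; one runs this for $(4,5),(8,3),(8,5),(2,5),(3,5)$. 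Exhibiting, conversely, curves or families with abelian $\QQ(\E[q])/\QQ$ for $q=2,3,4,5,8$ — the $2$-power cases from explicit parametrized families, $q=5$ from the curves of \cite{GJ} with $\QQ(\E[5])=\QQ(\z_5)$ — together with $q=6$ above, pins the list down to $q\in\{2,3,4,5,6,8\}$. The main obstacle is precisely the modular-curve input of the last two steps: the group-theoretic constraints only reduce matters to ``$G$ sits inside a Cartan subgroup'', and excluding $\ell=7$, the incompatible coprime pairs, and the large exponents genuinely requires knowing that the associated modular curves have no non-cuspidal, non-CM rational points — resting on Mazur's and Merel's theorems at prime level, explicit equations and genus/Jacobian computations at composite levels, and a direct inspection of the finitely many CM curves over $\QQ$.
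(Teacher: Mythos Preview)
The paper does not prove this theorem at all: it is a survey, and the statement is simply quoted from \cite{GJ} as a result of Gonz\'alez-Jim\'enez and Lozano-Robledo, with no argument supplied. There is therefore no ``paper's own proof'' against which to compare your sketch.

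That said, your outline is broadly the strategy of the original source: reduce to prime powers via $\E[q]\simeq\bigoplus_i\E[\ell_i^{a_i}]$, use the Weil pairing to force $\det$ onto all of $(\ZZ/\ell^a\ZZ)^*$, classify abelian subgroups of $\GL_2(\ZZ/\ell^a\ZZ)$ with surjective determinant, and then eliminate large $\ell$ and large exponents via rational points on the relevant modular curves. One point to tighten: the assertion ``lying in a Cartan mod $\ell$ forces a rational $\ell$-isogeny'' is only correct in the split case; in the nonsplit Cartan there is no $G_\QQ$-stable line in $\E[\ell]$, so no rational $\ell$-isogeny, and Mazur's isogeny theorem does not apply directly. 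The nonsplit case has to be handled separately --- in \cite{GJ} this is done by exploiting that an abelian image inside the nonsplit Cartan is cyclic, together with explicit knowledge of the low-level modular curves and the CM classification --- rather than by reducing to an isogeny. Apart from this, and the fact that your ``combination'' step is really just an assertion that certain modular curves of composite level have no suitable rational points (which is exactly where the work lies), the sketch is a fair summary of how the result is obtained.
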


 A classification of all number fields $k(\E[q])$, for $q\in \{3,4\}$ is given in \cite{BP2} (see also \cite{BP}
for number fields $\QQ(\E[3])$ and \cite{Pal18} for number fields $k(\E[5])$, where $\E$ is an elliptic curve with complex multiplication with Weierstrass form $y^2=x^3+bx$ or
$y^2=x^3+c$, where $b,c\in \QQ$). 
In the same paper a new set of generators is provided for the extension $k(\E[q])$, when $q$ is an odd number.
Let   $\z_q$ be a primitive $q$-th root of the unity as above
and $P_1=(x_1,y_1)$ and $P_2=(x_2,y_2)$  two $q$-torsion points of $\E$ forming a basis
of $\E[q]$. 
Then $$k(\E[q])=k(x_1,\z_q,y_2).$$  In addition, if $q=p^l$, with $p$ odd, then
$k(\E[p^l])=k(x_1,\z_p,y_2),$ for every $l$ \cite{DivPal}, where $\z_p$ is a primitive $p$-th root of the unity
and $P_1=(x_1,y_1)$ and $P_2=(x_2,y_2)$ two $p^l$-torsion points of $\E$ forming a basis
of $\E[p^l]$. For some other information about $q$-division fields $k(\E[q])$, see also \cite{Ade}.

\end{enumerate}

\section{Declarations}

\begin{description}

\item[Funding] Not applicable.
\item[Conflicts of interest/Competing interests] None.
\item[Availability of data and material] Not applicable.
\item[Code availability] Not applicable.
\end{description}



\vskip 1.5cm


\end{document}